\documentclass[10pt]{amsart}
\usepackage{amssymb} % MnSymbol,times
\usepackage{amsthm,amsmath}
\usepackage{epic,cite}

\title[Quasitrivial semigroups: characterizations and enumerations]{Quasitrivial semigroups: characterizations and enumerations}

\author{Miguel Couceiro}
\address{LORIA, (CNRS - Inria Nancy Grand Est - Universit\'e de Lorraine), BP239, 54506 Vandoeuvre-l\`es-Nancy, France} \email{miguel.couceiro[at]\{inria,loria\}.fr }

\author{Jimmy Devillet}
\address{Mathematics Research Unit, University of Luxembourg, Maison du Nombre, 6, avenue de la Fonte, L-4364 Esch-sur-Alzette, Luxembourg}
\email{jimmy.devillet[at]uni.lu}

\author{Jean-Luc Marichal}\thanks{Corresponding author: Jean-Luc Marichal is with the Mathematics Research Unit, University of Luxembourg, Maison du Nombre, 6, avenue de la Fonte, L-4364 Esch-sur-Alzette, Luxembourg.\\ Email: jean-luc.marichal[at]uni.lu}
\address{Mathematics Research Unit, University of Luxembourg, Maison du Nombre, 6, avenue de la Fonte, L-4364 Esch-sur-Alzette, Luxembourg}
\email{jean-luc.marichal[at]uni.lu}

\date{January 29, 2018}

\theoremstyle{plain}
\newtheorem{theorem}{Theorem}[section]
\newtheorem{lemma}[theorem]{Lemma}
\newtheorem{proposition}[theorem]{Proposition}
\newtheorem{corollary}[theorem]{Corollary}
\newtheorem{fact}[theorem]{Fact}

\theoremstyle{definition}
\newtheorem{definition}[theorem]{Definition}
\newtheorem{example}[theorem]{Example}

\theoremstyle{remark}

\newtheorem{remark}{Remark}

\newcommand{\R}{\mathbb{R}}

\newcommand{\cleq}{\mathrm{conv}_{\leq}}

\begin{document}
\begin{abstract}
We investigate the class of quasitrivial semigroups and provide various characterizations of the subclass of quasitrivial and commutative semigroups as well as the subclass of quasitrivial and order-preserving semigroups. We also determine explicitly the sizes of these classes when the semigroups are defined on finite sets. As a byproduct of these enumerations, we obtain several new integer sequences.
\end{abstract}

\keywords{Quasitrivial semigroup, quasitrivial and commutative semigroup, quasitrivial and order-preserving semigroup, enumeration of quasitrivial semigroups, single-peaked ordering, single-plateaued weak ordering.}

\subjclass[2010]{Primary 05A15, 20M14, 20M99; Secondary 39B72.}

\maketitle

%---------------------------------------------------------------------------------------
\section{Introduction}

Let $X$ be an arbitrary nonempty set. We use the symbol $X_n$ if $X$ contains $n\geq 1$ elements, in which case we assume without loss of generality that $X_n=\{1,\ldots,n\}$.

In this paper we investigate the class of binary operations $F\colon X^2\to X$ that are associative and quasitrivial, where quasitriviality means that $F$ always outputs one of its input values. In the algebraic language, the pair $(X,F)$ is then called a \emph{quasitrivial semigroup} (for general background, see, e.g., \cite{Jez78,Kep81,Lan80} and for a recent reference, see \cite{Ack}). We also investigate certain subclasses of quasitrivial semigroups by adding properties such as commutativity, order-preservation, and the existence of neutral elements. The case where the semigroups are defined on finite sets (i.e., $X=X_n$ for any integer $n\geq 1$) is of particular interest as it enables us to address and solve various enumeration issues. We remark that most of our results rely on a simple known theorem (Theorem~\ref{thm:kimura}) that provides a descriptive characterization of the class of quasitrivial semigroups.

After presenting some definitions and preliminary results (including Theorem~\ref{thm:kimura}) in Section 2, we provide in Section 3 different characterizations of the class of quasitrivial and commutative (i.e., Abelian) semigroups on both arbitrary sets and finite sets (Theorem~\ref{thm:char1}). In the latter case we illustrate some of our results by showing the contour plots of the operations. When $X$ is endowed with a total ordering we also characterize the subclass of quasitrivial, commutative, and order-preserving semigroups (Theorem~\ref{thm:cchar1}) by means of the single-peakedness property, which is a generalization to arbitrary totally ordered sets of a notion introduced 70 years ago in social choice theory. In Section 4 we introduce the ``weak single-peakedness'' property (Definition~\ref{de:wBlack}) as a further generalization of single-peakedness to arbitrary weakly ordered sets to characterize the class of quasitrivial and order-preserving semigroups (Theorem~\ref{thm:QtAsNdKi}). In the special case where the semigroups are defined on finite sets, the class of quasitrivial semigroups is also finite. This raises the problem of computing the size of this class as well as the sizes of all subclasses discussed in this paper. We tackle this problem in Section 4 where we arrive at some known integer sequences as well as new ones. The number of quasitrivial semigroups on $X_n$ for any integer $n\geq 1$ (Theorem~\ref{thm:Nk}) gives rise to a sequence that was previously unknown in the Sloane's On-Line Encyclopedia of Integer Sequences (OEIS, see \cite{Slo}). All the (old and new) sequences that we consider are given in explicit forms (i.e., closed-form expressions) and/or through their generating functions or exponential generating functions (Theorem~\ref{thm:Nk} and Propositions~\ref{prop:qe}, \ref{prop:vn}, \ref{prop:ven}, \ref{prop:rn}, and \ref{prop:ren}). In Section 5 we further investigate the single-peakedness and weak single-peakedness properties and provide a graphical characterization of weakly single-peaked weak orderings (Theorem~\ref{thm:VLL5}). We also observe that the weakly single-peaked weak orderings on finite sets are precisely the so-called single-plateaued weak orderings introduced in social choice theory.

%---------------------------------------------------------------------------------------
\section{Preliminaries}

Recall that a binary relation $R$ on $X$ is said to be
\begin{itemize}
\item \emph{total} if $\forall x,y$: $xRy$ or $yRx$;
\item \emph{transitive} if $\forall x,y,z$: $xRy$ and $yRz$ implies $xRz$;
\item \emph{antisymmetric} if $\forall x,y$: $xRy$ and $yRx$ implies $x=y$.
\end{itemize}
Note that any total binary relation $R$ on $X$ is \emph{reflexive}, i.e., $xRx$ for all $x\in X$.

Recall also that a \emph{total ordering on $X$} is a binary relation $\leq$ on $X$ that is total, transitive, and antisymmetric. More generally, a \emph{weak ordering on $X$} is a binary relation $\lesssim$ on $X$ that is total and transitive. We denote the symmetric and asymmetric parts of $\lesssim$ by $\sim$ and $<$, respectively. Thus, $x\sim y$ means that $x\lesssim y$ and $y\lesssim x$. Also, $x<y$ means that $x\lesssim y$ and $\neg(y\lesssim x)$. Recall also that $\sim$ is an equivalence relation on $X$ and that $<$ induces a total ordering on the quotient set ${X/\sim}$. Thus, defining a weak ordering on $X$ amounts to defining an ordered partition of $X$. For any $a\in X$, we use the notation $[a]_{\sim}$ to denote the equivalence class of $a$, i.e., $[a]_{\sim}=\{x\in X: x\sim a\}$.

For any total ordering $\leq$ on $X$, the pair $(X,\leq)$ is called a \emph{totally ordered set} or a \emph{chain}. Similarly, for any weak ordering $\lesssim$ on $X$, the pair $(X,\lesssim)$ is called a \emph{weakly ordered set}. For any integer $n\geq 1$, we assume without loss of generality that the pair $(X_n,\leq_n)$ represents the set $X_n=\{1,\ldots,n\}$ endowed with the total ordering relation $\leq_n$ defined by $1<_n\cdots <_nn$.

If $(X,\lesssim)$ is a weakly ordered set, an element $a\in X$ is said to be \emph{maximal} (resp.\ \emph{minimal}) \emph{for $\lesssim$} if $x\lesssim a$ (resp.\ $a\lesssim x$) for all $x\in X$. We denote the set of maximal (resp.\ minimal) elements of $X$ for $\lesssim$ by $\max_{\lesssim}X$ (resp.\ $\min_{\lesssim}X$). Note that this set need not be nonempty (consider, e.g., the set of nonnegative integers endowed with the usual total ordering $\leq$).

An operation $F\colon X^2\to X$ is said to be
\begin{itemize}
\item \emph{associative} if $F(F(x,y),z)=F(x,F(y,z))$ for all $x,y,z\in X$;
\item \emph{idempotent} if $F(x,x)=x$ for all $x\in X$;
\item \emph{quasitrivial} (or \emph{conservative}) if $F(x,y)\in\{x,y\}$ for all $x,y\in X$;
\item \emph{commutative} if $F(x,y)=F(y,x)$ for all $x,y\in X$;
\item \emph{$\leq$-preserving} for some total ordering $\leq$ on $X$ if for any $x,y,x',y'\in X$ such that $x\leq x'$ and $y\leq y'$, we have $F(x,y)\leq F(x',y')$.
\end{itemize}

Given a weak ordering $\lesssim$ on $X$, the \emph{maximum} (resp.\ \emph{minimum}) \emph{operation on $X$ for $\lesssim$} is the commutative binary operation $\max_{\lesssim}$ (resp.\ $\min_{\lesssim}$) defined on $X^2\setminus\{(x,y)\in X^2: x\sim y,~x\neq y\}$ by $\max_{\lesssim}(x,y)=y$ (resp.\ $\min_{\lesssim}(x,y)=x$) whenever $x\lesssim y$. We observe that if $\lesssim$ reduces to a total ordering, then the operation $\max_{\lesssim}$ (resp.\ $\min_{\lesssim}$) is defined everywhere on $X^2$.

Also, the \emph{projection operations} $\pi_1\colon X^2\to X$ and $\pi_2\colon X^2\to X$ (also called \emph{left-} and \emph{right-semigroups}) are respectively defined by $\pi_1(x,y)=x$ and $\pi_2(x,y)=y$ for all $x,y\in X$.

An element $e\in X$ is said to be a \emph{neutral element} of $F\colon X^2\to X$ if $F(x,e)=F(e,x)=x$ for all $x\in X$. An element $a\in X$ is said to be an \emph{annihilator element} of $F\colon X^2\to X$ if $F(x,a)=F(a,x)=a$ for all $x\in X$.

For any integer $n\geq 1$, any $F\colon X_n^2\to X_n$, and any $z\in X_n$, the \emph{$F$-degree of $z$}, denoted $\deg_F(z)$, is the number of points $(x,y)\in X_n^2\setminus\{(z,z)\}$ such that $F(x,y)=F(z,z)$. Also, the \emph{degree sequence of $F$}, denoted $\deg_F$, is the nondecreasing $n$-element sequence of the numbers $\deg_F(x)$, $x\in X_n$.

We now state the key theorem on which most of our results rely. It provides a descriptive characterization of the class of associative and quasitrivial operations on $X$. As observed by Ackerman \cite[Section~1.2]{Ack}, this result is a simple consequence of two papers on idempotent semigroups, namely Kimura~\cite{Kim58} and McLean~\cite{McL54}. It was also independently presented by various authors (see, e.g., Kepka~\cite[Corollary~1.6]{Kep81} and L\"anger~\cite[Theorem~1]{Lan80}). For the sake of completeness we provide a direct elementary proof.

\begin{theorem}\label{thm:kimura}
$F\colon X^2\to X$ is associative and quasitrivial if and only if there exists a weak ordering $\precsim$ on $X$ such that
\begin{equation}\label{eq:kimura}
F|_{A\times B} ~=~
\begin{cases}
\max_{\precsim}|_{A\times B}, & \text{if $A\neq B$},\\
\pi_1|_{A\times B}\hspace{1.5ex}\text{or}\hspace{1.5ex}\pi_2|_{A\times B}, & \text{if $A=B$},
\end{cases}
\qquad \forall A,B\in {X/\sim}.
\end{equation}
\end{theorem}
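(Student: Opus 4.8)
The plan is to prove the two implications separately. For sufficiency I would fix a weak ordering $\precsim$ for which $F$ has the stated form and check directly that $F$ is quasitrivial and associative. Quasitriviality is immediate, since on each block $A\times B$ the value $F(x,y)$ equals either $\max_{\precsim}(x,y)$ or one of $\pi_1(x,y),\pi_2(x,y)$, all of which lie in $\{x,y\}$. For associativity the key remark is that the $\sim$-class of $F(x,y)$ is always the larger of the classes of $x$ and $y$ with respect to the total ordering induced by $\precsim$ on $X/\sim$; hence, for $x,y,z$ in classes $A,B,C$, both $F(F(x,y),z)$ and $F(x,F(y,z))$ fall in the largest class $M$ among $A,B,C$. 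If $M$ contains only one of $x,y,z$, both sides collapse to that element because $\max_{\precsim}$ is associative on the chain $X/\sim$; if $M$ contains two or three of them, the projection on $M$ enters, and one verifies the finitely many configurations using that $\pi_1$ and $\pi_2$ are associative and absorb the arguments lying in smaller classes.

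For necessity I would reconstruct $\precsim$ from $F$. Quasitriviality forces idempotency, since $F(x,x)\in\{x\}$. I would then set
\[
x\precsim y\quad\Longleftrightarrow\quad F(x,y)=y\ \text{or}\ F(y,x)=y,
\]
so that, by quasitriviality, $\neg(x\precsim y)$ amounts to $F(x,y)=F(y,x)=x$. Totality is then immediate: if $F(x,y)=y$ then $x\precsim y$, while if $F(x,y)=x$ then $y\precsim x$. A short computation shows that for $x\neq y$ one has $x\sim y$ exactly when $F(x,y)\neq F(y,x)$; thus $F$ is commutative across two distinct classes and anti-commutative inside a single class.

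The hard part, and the only place where associativity is genuinely needed, is transitivity of $\precsim$. I would argue by contradiction: assuming $x\precsim y$, $y\precsim z$ but $F(x,z)=F(z,x)=x$ (the degenerate cases where two of the elements coincide being trivial via idempotency), I split according to which disjunct witnesses $x\precsim y$ and which witnesses $y\precsim z$, and in each case I evaluate a suitable associativity identity---such as $F(F(x,z),y)=F(x,F(z,y))$---to exhibit two distinct elements forced to be equal. Marshalling this finite but delicate case analysis is the main obstacle of the argument.

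Finally I would recover the formula. For $x,y$ in distinct classes with, say, $x\prec y$, unwinding the definition gives $F(x,y)=F(y,x)=y=\max_{\precsim}(x,y)$, which disposes of the case $A\neq B$. For $A=B$ I would show that $F$ restricted to a single class is a projection: on distinct elements it is anti-commutative, hence acts as $\pi_1$ or as $\pi_2$ on each pair, and testing the identity $F(F(a,b),c)=F(a,F(b,c))$ on triples rules out mixing the two options within one class. This forces $F|_{A\times A}$ to be globally $\pi_1$ or globally $\pi_2$, completing the proof.
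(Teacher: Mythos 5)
Your proposal is correct and follows essentially the same route as the paper: you define $\precsim$ by the identical condition $F(x,y)=y$ or $F(y,x)=y$, prove transitivity by contradiction via an associativity identity such as $F(F(x,z),y)=F(x,F(z,y))$, and rule out mixing $\pi_1$ and $\pi_2$ within a single $\sim$-class by testing associativity on triples. The only difference is that you spell out the sufficiency direction, which the paper dismisses as trivial.
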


\begin{proof}
(Sufficiency) Trivial.

(Necessity) We observe that the binary relation $\precsim$ defined on $X$ by
\begin{equation}\label{eq:prekimura}
x\precsim y \quad\Leftrightarrow\quad F(x,y)=y\hspace{1ex}\text{or}\hspace{1ex}F(y,x)=y,\qquad x,y\in X,
\end{equation}
is a weak ordering on $X$. Indeed, this relation is clearly total. Let us show that it is transitive. Let $x,y,z\in X$ be pairwise distinct and such that $x\precsim y$ and $y\precsim z$. Let us assume for instance that $F(x,y)=y$ and $F(z,y)=z$ (the other three cases can be dealt with similarly). Then we have $F(x,z)=z$ and hence $x\precsim z$. Indeed, otherwise we would have $x=F(x,z)=F(x,F(z,y))=F(F(x,z),y)=F(x,y)=y$, a contradiction.

Let us now show that Eq.~\eqref{eq:kimura} holds. It is easy to see that for any $x,y\in X$ such that $x\prec y$, we have $F|_{\{x,y\}^2}=\textstyle{\max_{\precsim}|_{\{x,y\}^2}}$. Similarly, for any distinct $x,y\in X$ such that $x\sim y$ we have $F|_{\{x,y\}^2}=\pi_1|_{\{x,y\}^2}$ or $F|_{\{x,y\}^2}=\pi_2|_{\{x,y\}^2}$. Finally, let us show that for any pairwise distinct $x,y,z\in X$ such that $x\sim y\sim z$, we cannot have both $F|_{\{x,y\}^2}=\pi_1|_{\{x,y\}^2}$ and $F|_{\{x,z\}^2}=\pi_2|_{\{x,z\}^2}$. Indeed, otherwise
\begin{itemize}
\item if $F(y,z)=y$, then $z=F(x,z)=F(F(x,y),z)=F(x,F(y,z))=F(x,y)=x$,
\item if $F(y,z)=z$, then $y=F(y,x)=F(y,F(z,x))=F(F(y,z),x)=F(z,x)=x$.
\end{itemize}
We reach a contradiction in each of these cases.
\end{proof}

It is not difficult to see that the weak ordering $\precsim$ mentioned in Theorem~\ref{thm:kimura} is uniquely determined from $F$ and can be defined by condition \eqref{eq:prekimura}. If $X=X_n$ for some integer $n\geq 1$, then $\precsim$ can be as well defined as follows: $x\precsim y$ if and only if $\deg_F(x)\leq\deg_F(y)$.\footnote{Thus, when $X=X_n$ the weak ordering $\precsim$ is completely determined by a set of $n$ integers (actually $n-1$ integers since we have $\sum_{x\in X_n}\deg_F(x)=n(n-1)$ whenever $F$ is idempotent).} This latter equivalence can be easily derived (see Corollary~\ref{cor:degdeg}) from the following proposition.

\begin{proposition}\label{prop:degxe}
If $F\colon X_n^2\to X_n$ is of the form \eqref{eq:kimura} for some weak ordering $\precsim$ on $X_n$, then for any $x\in X_n$, we have
\begin{eqnarray*}
\deg_F(x) &=& 2\times|\{z\in X_n: z\prec x\}|+|\{z\in X_n: z\sim x,~z\neq x\}|\\
&=& |\{z\in X_n: z\prec x\}|+|\{z\in X_n: z\precsim x\}|-1.
\end{eqnarray*}
\end{proposition}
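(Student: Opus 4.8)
The plan is to compute $\deg_F(x)$ directly from its definition by counting the pairs $(y,z)\in X_n^2\setminus\{(x,x)\}$ satisfying $F(y,z)=F(x,x)=x$, using the explicit description of $F$ provided by Eq.~\eqref{eq:kimura}. Since $F$ is of the form \eqref{eq:kimura}, it is in particular idempotent, so $F(x,x)=x$ and the condition we must count is simply $F(y,z)=x$. I would partition the relevant pairs $(y,z)$ according to how $y$ and $z$ relate to $x$ under the weak ordering $\precsim$, namely into the cases $y\prec x$, $x\prec y$, $y\sim x$ (and symmetrically for $z$), and analyze in which of these cases $F(y,z)$ can equal $x$.

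First I would handle the pairs where at least one argument is strictly below $x$ and see how $\max_{\precsim}$ forces the output. If $z\prec x$ and $x\precsim y$ with $y\neq x$ (equivalently $x\prec y$ or $x\sim y$, $y\ne x$), then by \eqref{eq:kimura} we have $F(y,z)=\max_{\precsim}(y,z)=y\neq x$, so such pairs do not contribute; the same holds with the roles of $y$ and $z$ swapped. On the other hand, if $z\prec x$ and $y\sim x$ in the sense that in fact $y=x$, then $F(x,z)=\max_{\precsim}(x,z)=x$, which does contribute; likewise $F(z,x)=x$. This shows that every $z\prec x$ yields exactly the two contributing pairs $(x,z)$ and $(z,x)$, accounting for the term $2\times|\{z:z\prec x\}|$. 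Pairs with both arguments strictly above $x$, or with one argument strictly above $x$ and the other equal to or equivalent to $x$, produce an output $\succ x$ or $\sim x$ but never equal to $x$ (using that the max of two elements $\succsim x$, not both $x$, lands in a class $\succsim x$ and equals $x$ only in the degenerate cases already counted), so they contribute nothing.

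Next I would treat the pairs lying entirely within the class $[x]_\sim$, i.e.\ $y\sim x$ and $z\sim x$. Here \eqref{eq:kimura} tells us that $F|_{[x]_\sim\times[x]_\sim}$ equals $\pi_1$ or $\pi_2$. If it is $\pi_1$, the pairs $(y,z)$ with $y=x$, $z\sim x$, $z\neq x$ give $F(y,z)=x$; if it is $\pi_2$, the pairs with $z=x$, $y\sim x$, $y\neq x$ give $F(y,z)=x$. In either case the number of such contributing pairs (excluding $(x,x)$) is exactly $|\{z:z\sim x,\ z\neq x\}|$, and the pair $(x,x)$ itself is excluded by definition of $\deg_F$. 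Summing the two contributions yields the first displayed equality; the second then follows by the purely set-theoretic identity $\{z:z\precsim x\}=\{z:z\prec x\}\cup\{z:z\sim x\}$, so that $|\{z:z\precsim x\}|=|\{z:z\prec x\}|+|\{z:z\sim x,\ z\neq x\}|+1$, and substituting rewrites $2|\{z:z\prec x\}|+|\{z:z\sim x,\ z\neq x\}|$ as $|\{z:z\prec x\}|+|\{z:z\precsim x\}|-1$.

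The only real care needed, which I would regard as the main obstacle, is the bookkeeping in the case analysis: one must be sure that the pairs $(x,z)$ and $(z,x)$ with $z\prec x$ are genuinely distinct and disjoint from the within-class pairs, that no pair is double-counted across the strict and equivalent cases, and that the single excluded point $(x,x)$ is handled correctly so that the count matches the definition of $\deg_F(x)$ exactly. Everything else is a direct and mechanical application of the two branches of \eqref{eq:kimura}.
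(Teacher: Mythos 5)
Your proposal is correct and follows essentially the same route as the paper: a direct case analysis on how the arguments relate to $x$ under $\precsim$, using the two branches of \eqref{eq:kimura}. The paper streamlines the bookkeeping you flag as the main concern by first invoking quasitriviality to observe that only points of the form $(x,z)$ or $(z,x)$ can share the value $F(x,x)=x$ (which in particular disposes of the pairs with both arguments $\prec x$, a subcase your partition leaves implicit), and then runs the same three cases $z\prec x$, $x\prec z$, and $z\sim x$ with $z\neq x$.
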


\begin{proof}
Let $x\in X_n$. By quasitriviality, only points of the form $(x,z)$ or $(z,x)$, with $z\in X_n$, may have the same value as $(x,x)$.
\begin{itemize}
\item If $z\prec x$, then $F(x,z)=F(z,x)=x=F(x,x)$.
\item If $x\prec z$, then $F(x,z)=F(z,x)=z\neq F(x,x)$.
\item If $z\sim x$ and $z\neq x$, then either $F(x,z)=\pi_1(x,z)$ or $F(x,z)=\pi_2(x,z)$. In the first case, we have $F(x,z)=x=F(x,x)\neq z=F(z,x)$. The other case is similar.
\end{itemize}
This completes the proof of Proposition~\ref{prop:degxe}.
\end{proof}

\begin{corollary}\label{cor:degdeg}
If $F\colon X_n^2\to X_n$ is of the form \eqref{eq:kimura} for some weak ordering $\precsim$ on $X_n$, then for any $x,y\in X_n$, we have
$$
x\precsim y\quad\Leftrightarrow\quad\deg_F(x)\leq\deg_F(y).
$$
\end{corollary}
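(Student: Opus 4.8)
The plan is to use Proposition~\ref{prop:degxe} directly, since it gives an explicit formula for $\deg_F(x)$ in terms of the weak ordering $\precsim$. The key observation is that the quantity
\[
\deg_F(x) ~=~ |\{z\in X_n: z\prec x\}|+|\{z\in X_n: z\precsim x\}|-1
\]
should be a monotone function of the ``position'' of $x$ in the weak ordering $\precsim$. First I would note that both counting functions $x\mapsto|\{z: z\prec x\}|$ and $x\mapsto|\{z: z\precsim x\}|$ are weakly isotone with respect to $\precsim$: if $x\precsim y$, then $z\prec x$ implies $z\prec y$ (by transitivity, since $z\prec x\precsim y$ gives $z\prec y$), and likewise $z\precsim x$ implies $z\precsim y$. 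Hence the set inclusions $\{z:z\prec x\}\subseteq\{z:z\prec y\}$ and $\{z:z\precsim x\}\subseteq\{z:z\precsim y\}$ hold, yielding immediately that $x\precsim y$ implies $\deg_F(x)\le\deg_F(y)$. This establishes the forward implication.

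For the converse, since $\precsim$ is a weak ordering it is \emph{total}, so for any $x,y\in X_n$ we have $x\precsim y$ or $y\precsim x$. The strategy is contrapositive combined with this totality: if $\neg(x\precsim y)$, then by totality $y\prec x$ (strictly), and I would argue that this forces $\deg_F(y)<\deg_F(x)$, i.e.\ $\neg(\deg_F(x)\le\deg_F(y))$. Concretely, from $y\prec x$ one gets the strict inclusions $\{z:z\prec y\}\cup\{y\}\subseteq\{z:z\prec x\}$ and $\{z:z\precsim y\}\subseteq\{z:z\precsim x\}$, with the element $x$ lying in $\{z:z\precsim x\}$ but not in $\{z:z\precsim y\}$; summing the two cardinalities shows $\deg_F(y)<\deg_F(x)$ strictly.

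The only subtlety I anticipate is handling the case $x\sim y$ with $x\ne y$, where one must check that equivalent elements genuinely receive equal degrees (so that $\precsim$ and the numerical comparison agree on ties, not merely on strict comparisons). This is immediate from the first formula in Proposition~\ref{prop:degxe}: if $x\sim y$ then $\{z:z\prec x\}=\{z:z\prec y\}$ and $\{z:z\sim x,\,z\ne x\}$ and $\{z:z\sim y,\,z\ne y\}$ have the same cardinality (both equal $|[x]_{\sim}|-1=|[y]_{\sim}|-1$), so $\deg_F(x)=\deg_F(y)$. Combining this with the strict case above yields that the numerical order on the degrees exactly reproduces $\precsim$, and hence the stated equivalence holds in both directions. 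I do not expect any real obstacle here; the whole argument is a short bookkeeping exercise on set cardinalities made possible by the explicit formula, with totality of $\precsim$ being the one structural fact that drives the converse direction.
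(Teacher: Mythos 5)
Your proposal is correct and follows essentially the same route as the paper: the forward implication via the monotonicity (under $\precsim$) of the two counting functions appearing in the second formula of Proposition~\ref{prop:degxe}, and the converse by contrapositive using totality of $\precsim$. The paper merely leaves the contrapositive step as ``proved similarly,'' whereas you spell out the strict inclusions and the tie case $x\sim y$ explicitly; both of those details check out.
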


\begin{proof}
Let $x,y\in X_n$ such that $x\precsim y$. We clearly have
$$
|\{z\in X_n: z\prec x\}|~\leq ~|\{z\in X_n: z\prec y\}|
$$
and
$$
|\{z\in X_n: z\precsim x\}|~\leq ~|\{z\in X_n: z\precsim y\}|.
$$
By Proposition~\ref{prop:degxe}, we then immediately have $\deg_F(x)\leq\deg_F(y)$. The (contrapositive of the) reverse implication can be proved similarly.
\end{proof}

From the properties of the maximum operation in \eqref{eq:kimura}, we can observe the following fact.

\begin{fact}\label{fact:2}
If $F\colon X^2\to X$ is of the form \eqref{eq:kimura} for some weak ordering $\precsim$ on $X$, then $F$ has a neutral element $e\in X$ (resp.\ an annihilator element $a\in X$) if and only if the weakly ordered set $(X,\precsim)$ has a unique minimal element denoted by $x^{\bot}$ (resp.\ a unique maximal element denoted by $x^{\top}$). In this case we have $e = x^{\bot}$ (resp.\ $a=x^{\top}$).
\end{fact}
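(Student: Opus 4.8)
The plan is to prove the neutral-element equivalence in full and then obtain the annihilator-element equivalence by the dual argument. Throughout I read off the behavior of $F$ directly from \eqref{eq:kimura}: within a single class $A=[x]_{\sim}$ the operation $F$ coincides with one fixed projection, either $\pi_1$ on all of $A\times A$ or $\pi_2$ on all of $A\times A$, whereas across two distinct classes it returns the $\precsim$-greater of its two arguments. The observation that drives both directions is that a neutral element must sit in a singleton class, since a projection restricted to two distinct elements can never be neutral.

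For sufficiency, assume $(X,\precsim)$ has a unique minimal element $x^{\bot}$. I would first check that $[x^{\bot}]_{\sim}=\{x^{\bot}\}$, since any $y\sim x^{\bot}$ would be minimal as well by transitivity, contradicting uniqueness. Hence every $x\neq x^{\bot}$ satisfies $x^{\bot}\prec x$, so $x$ and $x^{\bot}$ lie in distinct classes and \eqref{eq:kimura} yields $F(x^{\bot},x)=F(x,x^{\bot})=\max_{\precsim}(x^{\bot},x)=x$; combined with idempotence $F(x^{\bot},x^{\bot})=x^{\bot}$, this shows $x^{\bot}$ is neutral, so $e=x^{\bot}$.

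For necessity, assume $F$ has a neutral element $e$. The key step, and the one needing the most care, is to show $[e]_{\sim}=\{e\}$. If some $y\sim e$ with $y\neq e$ existed, then $F$ on $[e]_{\sim}^2$ would be a single projection: if it is $\pi_1$ then $F(e,y)=e\neq y$, and if it is $\pi_2$ then $F(y,e)=e\neq y$, either case contradicting neutrality. Once the class is a singleton, each $x\neq e$ lies in a class distinct from that of $e$, so $F(e,x)=\max_{\precsim}(e,x)$, and neutrality forces this to equal $x$, hence $e\prec x$. Thus $e$ is the least element of $(X,\precsim)$ and therefore its unique minimal element, which also gives $e=x^{\bot}$.

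The annihilator-element equivalence is then handled symmetrically: repeating the three steps above with \emph{maximal}, $x^{\top}$, and the condition $F(x,a)=F(a,x)=a$ in place of their minimal counterparts gives the result, the only change being that across distinct classes the annihilator must be the $\precsim$-greater element rather than the smaller one. Equivalently, one may apply the neutral-element equivalence already proved to the operation of the form \eqref{eq:kimura} associated with the converse weak ordering $\precsim^{-1}$ (keeping the same within-class projections), under which annihilators of $F$ correspond exactly to neutral elements and $x^{\top}$ becomes the unique minimal element. I expect the singleton-class argument to be the only real obstacle; the remaining verifications are immediate from \eqref{eq:kimura}.
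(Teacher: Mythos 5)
Your proof is correct, and it follows exactly the route the paper intends: the paper states this as a Fact to be ``observed'' directly from the properties of the maximum operation in \eqref{eq:kimura} and gives no written proof, and your argument (a neutral or annihilator element must lie in a singleton $\sim$-class because a projection restricted to two distinct elements is never neutral nor annihilating, after which the cross-class behaviour of $\max_{\precsim}$ forces $e=x^{\bot}$, resp.\ $a=x^{\top}$) is precisely the verification being left to the reader. The duality via $\precsim^{-1}$ that you invoke for the annihilator case is also the one the paper records in the remark immediately following the Fact, so nothing is missing.
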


\begin{remark}
If $F\colon X^2\to X$ is of the form \eqref{eq:kimura} for some weak ordering $\precsim$ on $X$, then, by replacing $\precsim$ with its inverse relation ${\precsim^{-1}}$ (defined by ${a\precsim^{-1}}b\Leftrightarrow b\precsim a$), we see that $F$ is again of the form \eqref{eq:kimura}, except that the maximum operation is changed to the minimum operation. Thus, choosing the maximum or the minimum operation is just a matter of convention.
\end{remark}

The following lemma can be obtained by following the first few steps of the proof of \cite[Theorem~3]{CzoDre84}, which was stated in the special case where $X$ is an arbitrary closed real interval. For the sake of self-containedness we provide a short proof.

\begin{lemma}[{see \cite[Theorem~3]{CzoDre84}}]\label{lemma:C-D}
If $F\colon X^2\to X$ is associative, idempotent, $\leq$-preserving for some total ordering $\leq$ on $X$, and has a neutral element, then $F$ is quasitrivial.
\end{lemma}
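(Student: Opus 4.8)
The plan is to fix a neutral element $e$ and analyze $F$ separately on the three regions determined by $e$ with respect to the total ordering $\leq$: the pairs $(x,y)$ with $x,y\geq e$, those with $x,y\leq e$, and the mixed pairs with one argument on each side of $e$. The starting point, valid for every pair, is the \emph{sandwiching inequality} $\min(x,y)\leq F(x,y)\leq\max(x,y)$, which follows at once from $\leq$-preservation together with idempotency, since $\min(x,y)\leq x,y\leq\max(x,y)$ yields $F(\min(x,y),\min(x,y))\leq F(x,y)\leq F(\max(x,y),\max(x,y))$.

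First I would dispose of the two ``pure'' regions. If $x,y\geq e$, then $\leq$-preservation and neutrality give $F(x,y)\geq F(x,e)=x$ and $F(x,y)\geq F(e,y)=y$, hence $F(x,y)\geq\max(x,y)$; combined with the sandwiching inequality this forces $F(x,y)=\max(x,y)$. Dually, if $x,y\leq e$, then $F(x,y)\leq F(x,e)=x$ and $F(x,y)\leq F(e,y)=y$ give $F(x,y)\leq\min(x,y)$, whence $F(x,y)=\min(x,y)$. In both cases $F(x,y)\in\{x,y\}$, and these arguments use only monotonicity, idempotency, and the neutral element; associativity has not yet been needed.

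The main obstacle is the mixed case, say $x\leq e\leq y$ (the case $y\leq e\leq x$ being symmetric), where sandwiching only yields $x\leq F(x,y)\leq y$ and leaves room for intermediate values, so that associativity becomes essential. Writing $u=F(x,y)$, associativity and idempotency produce the two identities $F(u,y)=F(F(x,y),y)=F(x,F(y,y))=u$ and $F(x,u)=F(x,F(x,y))=F(F(x,x),y)=u$. Since $\leq$ is total, either $u\geq e$ or $u\leq e$. If $u\geq e$, then $u$ and $y$ both lie in the upper region, so the first identity together with the formula already established there gives $u=F(u,y)=\max(u,y)=y$ (using $u\leq y$). If $u\leq e$, then $x$ and $u$ both lie in the lower region, so the second identity gives $u=F(x,u)=\min(x,u)=x$ (using $x\leq u$). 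Either way $u\in\{x,y\}$. Running the same computation on $v=F(y,x)$, for which the identities read $F(y,v)=v$ and $F(v,x)=v$, shows $F(y,x)\in\{x,y\}$ as well, and combining the three regions yields quasitriviality. I expect the only delicate point to be the bookkeeping of the non-commutative case, so that both $F(x,y)$ and $F(y,x)$ are covered, and the verification that the boundary values $x=e$ or $y=e$ (handled directly by neutrality) cause no trouble.
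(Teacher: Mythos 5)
Your proof is correct and follows essentially the same route as the paper's: the sandwiching inequality from idempotency and monotonicity, the identification of $F$ with $\max$ (resp.\ $\min$) on the region above (resp.\ below) the neutral element via monotonicity and neutrality, and then the associativity computation $F(x,F(x,y))=F(F(x,x),y)=F(x,y)$ (and its mirror) to pin down the mixed case according to whether $F(x,y)$ lies above or below $e$. The only cosmetic difference is that you track $F(x,y)$ and $F(y,x)$ as two explicit subcases where the paper disposes of the second by duality.
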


\begin{proof}
Let $e$ denote the neutral element of $F$. By idempotency and $\leq$-preservation we clearly have $\min_{\leq}(x,y)\leq F(x,y)\leq \max_{\leq}(x,y)$ for all $x,y\in X$. If $x,y\leq e$, then by $\leq$-preservation we obtain $F(x,y)\leq \min_{\leq}(F(x,e),F(e,y))=\min_{\leq}(x,y)$. Thus $F(x,y)=\min_{\leq}(x,y)$ whenever $x,y\leq e$. We show dually that $F(x,y)=\max_{\leq}(x,y)$ whenever $x,y\geq e$. Assume now that $x<e<y$ (the case $y<e<x$ can be dealt with dually). If $F(x,y)\leq e$, then $F(x,y)=F(F(x,x),y)=F(x,F(x,y))=\min_{\leq}(x,F(x,y))=x$. We prove similarly that $F(x,y)=y$ whenever $F(x,y)\geq e$. It follows that $F$ is quasitrivial.
\end{proof}

When $X_n$ is endowed with $\leq_n$, the operations $F\colon X_n^2\to X_n$ can be visualized through their contour plots, where we connect points in $X_n^2$ having the same $F$-values by edges or paths. For instance, the operation $F\colon X_6^2\to X_6$ whose contour plot is shown in Figure~\ref{fig:fs2} is associative, quasitrivial, commutative, and $\leq_6$-preserving.

\setlength{\unitlength}{3.5ex}
\begin{figure}[htbp]
\begin{center}
\begin{small}
\begin{picture}(8,8)
\put(0.5,0.5){\vector(1,0){7}}\put(0.5,0.5){\vector(0,1){7}}
\multiput(1.5,0.45)(1,0){6}{\line(0,1){0.1}}%
\multiput(0.45,1.5)(0,1){6}{\line(1,0){0.1}}%
\put(1.5,0){\makebox(0,0){$1$}}\put(2.5,0){\makebox(0,0){$2$}}\put(3.5,0){\makebox(0,0){$3$}}
\put(4.5,0){\makebox(0,0){$4$}}\put(5.5,0){\makebox(0,0){$5$}}\put(6.5,0){\makebox(0,0){$6$}}
\put(0,1.5){\makebox(0,0){$1$}}\put(0,2.5){\makebox(0,0){$2$}}\put(0,3.5){\makebox(0,0){$3$}}
\put(0,4.5){\makebox(0,0){$4$}}\put(0,5.5){\makebox(0,0){$5$}}\put(0,6.5){\makebox(0,0){$6$}}
\multiput(1.5,1.5)(0,1){6}{\multiput(0,0)(1,0){6}{\circle*{0.2}}}
\drawline[1](6.5,1.5)(1.5,1.5)(1.5,6.5)\drawline[1](2.5,6.5)(6.5,6.5)(6.5,2.5)\drawline[1](2.5,5.5)(2.5,2.5)(5.5,2.5)
\drawline[1](3.5,5.5)(3.5,3.5)(5.5,3.5)\drawline[1](4.5,5.5)(5.5,5.5)(5.5,4.5)
\put(1.85,1.85){\makebox(0,0){$1$}}\put(2.85,2.85){\makebox(0,0){$2$}}\put(3.85,3.85){\makebox(0,0){$3$}}
\put(4.85,4.85){\makebox(0,0){$4$}}\put(5.85,5.85){\makebox(0,0){$5$}}\put(6.85,6.85){\makebox(0,0){$6$}}
\end{picture}
\end{small}
\caption{An associative and quasitrivial operation on $X_6$ (contour plot)}
\label{fig:fs2}
\end{center}
\end{figure}
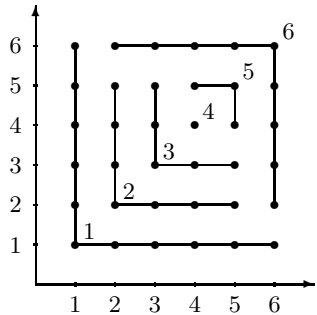

Two points $(x,y)$ and $(u,v)$ of $X_n^2$ are said to be \emph{$F$-connected} if they have the same $F$-value, i.e., if $F(x,y)=F(u,v)$. Using this definition, we can state the following four graphical tests (see \cite{CouDevMar}), where $F\colon X_n^2\to X_n$ denotes an arbitrary operation and $\Delta_{X_n}$ denotes the set $\{(x,x): x\in X_n\}$.
\begin{itemize}
\item $F$ is quasitrivial if and only if it is idempotent and every point $(x,y)\in X_n^2\setminus\Delta_{X_n}$ is $F$-connected to either $(x,x)$ or $(y,y)$.
\item If $F$ is quasitrivial, then $e\in X_n$ is a neutral element of $F$ if and only if the point $(e,e)$ is not $F$-connected to another point, i.e., if and only if $\deg_F(e)=0$.
\item If $F$ is quasitrivial, then $a\in X_n$ is an annihilator element of $F$ if and only if the point $(a,a)$ is $F$-connected to exactly $2n-2$ points, i.e., if and only if $\deg_F(a)=2n-2$.
\item If $F$ is quasitrivial, then it is associative if and only if for every rectangle in $X_n^2$ that has only one vertex on $\Delta_{X_n}$, at least two of the remaining three vertices are $F$-connected.
\end{itemize}

%---------------------------------------------------------------------------------------
\section{Quasitrivial and commutative semigroups}

In this section we provide characterizations of the class of associative, quasitrivial, and commutative operations $F\colon X^2\to X$, or equivalently, the class of quasitrivial and commutative semigroups on $X$. We also characterize the subclass of those operations that are order-preserving with respect to some total ordering on $X$.

The first characterization is given in the following theorem, which immediately follows from  Theorem~\ref{thm:kimura}. We observe that Ackerman (see \cite[Corollary~4.10]{Ack}) generalized this result to $n$-ary semigroups for any integer $n\geq 2$.

\begin{theorem}\label{thm:mainSy}
$F\colon X^2\to X$ is associative, quasitrivial, and commutative if and only if there exists a total ordering $\preceq$ on $X$ such that $F=\max_{\preceq}$.
\end{theorem}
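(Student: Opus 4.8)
The plan is to deduce the statement directly from Theorem~\ref{thm:kimura}, so that essentially no new work is needed beyond extracting the consequence of commutativity. The sufficiency direction is immediate: if $F=\max_{\preceq}$ for some total ordering $\preceq$ on $X$, then $F$ is associative, quasitrivial, and commutative, since the maximum operation for a total ordering enjoys all three properties. I would dispatch this in one sentence.

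For the necessity direction, suppose $F$ is associative, quasitrivial, and commutative. By Theorem~\ref{thm:kimura} there exists a weak ordering $\precsim$ on $X$ for which $F$ has the form \eqref{eq:kimura}. The single observation that drives the proof is that commutativity forces $\precsim$ to be antisymmetric, hence a total ordering. Indeed, suppose some $\sim$-class contained two distinct elements $x,y$, so that $x\sim y$. Then by the second line of \eqref{eq:kimura}, $F$ restricted to $\{x,y\}^2$ would coincide with either $\pi_1$ or $\pi_2$; in the first case $F(x,y)=x\neq y=F(y,x)$, and symmetrically in the second case, contradicting commutativity. Therefore every $\sim$-class is a singleton, which means that $\sim$ reduces to equality and $\precsim$ is in fact a total ordering, which we denote by $\preceq$.

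It then remains to check that $F=\max_{\preceq}$ everywhere. Since all classes are singletons, the second line of \eqref{eq:kimura} applies only to diagonal pairs $(x,x)$, where $\pi_1$, $\pi_2$, and $\max_{\preceq}$ all return $x$ by quasitriviality; and on every off-diagonal pair (which lies in some $A\times B$ with $A\neq B$) the first line gives $F=\max_{\preceq}$. Hence $F$ and $\max_{\preceq}$ agree on all of $X^2$, completing the argument.

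I do not expect any genuine obstacle here: the entire content of the necessity direction is the remark that the non-commutative behaviour of the projections $\pi_1$ and $\pi_2$ on a two-element $\sim$-class is incompatible with commutativity, which collapses the weak ordering $\precsim$ furnished by Theorem~\ref{thm:kimura} to a total ordering. Everything else is bookkeeping on the two cases of \eqref{eq:kimura}.
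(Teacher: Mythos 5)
Your proof is correct and follows exactly the route the paper intends: the paper gives no separate proof of this theorem, stating only that it ``immediately follows from Theorem~\ref{thm:kimura}'', and your argument is precisely the fleshed-out version of that deduction (commutativity kills the projection cases, collapsing the weak ordering to a total ordering).
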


Theorem~\ref{thm:char1} below provides alternative characterizations of the class of associative, quasitrivial, and commutative operations. We first consider the following auxiliary lemma.

\begin{lemma}\label{lemma:QiA}
If $F\colon X^2\to X$ is quasitrivial, commutative, $\leq$-preserving for some total ordering $\leq$ on $X$, then $F$ is associative.
\end{lemma}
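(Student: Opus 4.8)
The plan is to reduce associativity to the existence of a total ordering $\preceq$ on $X$ for which $F=\max_{\preceq}$, and then invoke the (trivial) sufficiency direction of Theorem~\ref{thm:mainSy}. First I would record that quasitriviality forces idempotency, since $F(x,x)\in\{x\}$. Using commutativity together with quasitriviality, each unordered pair $\{x,y\}$ has a well-defined value $F(x,y)=F(y,x)\in\{x,y\}$, so I can define a relation $\preceq$ on $X$ by declaring $x\preceq y$ whenever $x=y$ or $F(x,y)=y$. Totality, reflexivity, and antisymmetry of $\preceq$ are then immediate: totality because $F(x,y)\in\{x,y\}$ always designates one of the two as the larger, reflexivity from idempotency, and antisymmetry because $F(x,y)$ cannot simultaneously equal $x$ and $y$ when $x\neq y$.

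The crux is transitivity, and this is the only place where $\leq$-preservation enters. I would phrase a failure of transitivity in tournament language: for distinct $a,b$, say that $a$ \emph{beats} $b$ if $F(a,b)=a$. Then $\preceq$ fails to be transitive precisely when some triple $\{x,y,z\}$ carries a $3$-cycle, i.e.\ each of the three elements beats exactly one of the other two. To rule this out, take the three elements written in $\leq$-increasing order as $p<q<r$. Applying $\leq$-preservation to $p\leq p,\ q\leq r$ and to $p\leq q,\ r\leq r$ yields

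\begin{equation*}
F(p,q)~\leq~F(p,r)~\leq~F(q,r).
\end{equation*}

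Since $F(p,r)\in\{p,r\}$, I split into two cases. If $F(p,r)=p$, then $F(p,q)\leq p$ rules out the value $q$ (as $q>p$) and forces $F(p,q)=p$, so $p$ beats both $q$ and $r$. If instead $F(p,r)=r$, then $F(q,r)\geq r$ rules out the value $q$ (as $q<r$) and forces $F(q,r)=r$, so $r$ beats both $p$ and $q$. In either case one element dominates the other two, which is incompatible with a $3$-cycle. Hence no triple carries a $3$-cycle, and $\preceq$ is transitive.

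With $\preceq$ established as a total ordering, the definition gives $F(x,y)=y$ exactly when $x\preceq y$, that is, $F=\max_{\preceq}$; the sufficiency direction of Theorem~\ref{thm:mainSy} (equivalently, Theorem~\ref{thm:kimura}) then shows that $F$ is associative. I expect the transitivity step to be the main obstacle: everything else is formal, and the real content lies in observing that $\leq$-preservation prevents a quasitrivial commutative operation from realizing a cyclic tournament on any three elements. The one bookkeeping point to watch is that the tournament reformulation must correctly match an intransitive pattern $x\prec y\prec z\prec x$ to the ``beats'' relation, so that the two monotonicity cases genuinely contradict cyclicity rather than some other configuration.
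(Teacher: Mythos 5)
Your proof is correct, and it is genuinely different from what the paper does: the paper gives no argument at all for this lemma, instead citing Proposition~2 of Mart\'{\i}n--Mayor--Torrens (proved there for $X=[0,1]$) and remarking that the algebraic proof carries over to any totally ordered set. Your route is to build the total ordering $\preceq$ directly from $F$ (via $x\preceq y$ iff $x=y$ or $F(x,y)=y$), observe that commutativity plus quasitriviality make the ``beats'' relation a tournament so that failure of transitivity is exactly a $3$-cycle, and then use the sandwich $F(p,q)\leq F(p,r)\leq F(q,r)$ coming from $\leq$-preservation to show that on any triple $p<q<r$ either $p$ or $r$ beats both other elements, which excludes cyclic triples. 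The conclusion $F=\max_{\preceq}$ then yields associativity from the trivial direction of Theorem~\ref{thm:mainSy}, with no circularity since that direction is independent of this lemma. What your approach buys is a short, self-contained proof that simultaneously establishes the implication (i) $\Rightarrow$ (ii) of Theorem~\ref{thm:cchar1} in explicit form (it even exhibits $\preceq$ and, via Definition~\ref{de:Black}, its single-peakedness for $\leq$ is immediate from your two cases); the paper's choice merely trades this for brevity by outsourcing the verification. All the individual steps check out: idempotency, totality, and antisymmetry are as routine as you say, the reduction of intransitivity to a $3$-cycle on three \emph{distinct} elements is legitimate because reflexivity disposes of degenerate triples, and the two monotonicity cases do cover $F(p,r)\in\{p,r\}$ exhaustively.
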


\begin{proof}
This result was established in the special case where $X$ is the real unit interval $[0,1]$ in \cite[Proposition~2]{MarMayTor03}. The proof therein is purely algebraic and hence it applies to any nonempty totally ordered set.
\end{proof}

\begin{theorem}\label{thm:char1}
Let $F\colon X^2\to X$ be an operation. The following assertions are equivalent.
\begin{enumerate}
\item[(i)] $F$ is associative, quasitrivial, and commutative.
\item[(ii)] $F=\max_{\preceq}$ for some total ordering $\preceq$ on $X$.
\item[(iii)] $F$ is quasitrivial, commutative, and $\leq$-preserving for some total ordering $\leq$ on $X$.
\end{enumerate}
If $X=X_n$ for some integer $n\geq 1$, then any of the assertions (i)--(iii) above is equivalent to any of the following ones.
\begin{enumerate}
\item[(iv)] $F$ is quasitrivial and satisfies $\deg_F=(0,2,4,\ldots,2n-2)$.
\item[(v)] $F$ is associative, idempotent, commutative, $\leq$-preserving for some total ordering $\leq$ on $X$, and has a neutral element.
\end{enumerate}
Moreover, there are exactly $n!$ operations $F\colon X_n^2\to X_n$ satisfying any of the assertions (i)--(v). Furthermore, the total ordering $\preceq$ considered in assertion (ii) is uniquely defined as follows: $x\preceq y$ if and only if $\deg_F(x)\leq\deg_F(y)$. In particular, each of these operations has the (unique) neutral element $e=\min_{\preceq}X_n$ and the (unique) annihilator element $a=\max_{\preceq}X_n$.
\end{theorem}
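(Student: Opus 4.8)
The plan is to establish the equivalences in two stages. On an arbitrary set $X$ the cycle (i) $\Leftrightarrow$ (ii) $\Leftrightarrow$ (iii) can be closed using results already at hand. The equivalence (i) $\Leftrightarrow$ (ii) is exactly Theorem~\ref{thm:mainSy}, so nothing new is needed there. For (ii) $\Rightarrow$ (iii) I would take $\leq\,=\,\preceq$ and observe that $\max_{\preceq}$ is quasitrivial, commutative, and $\preceq$-preserving by inspection. For (iii) $\Rightarrow$ (i) I would invoke Lemma~\ref{lemma:QiA}, which upgrades quasitriviality, commutativity, and $\leq$-preservation to associativity; together with the hypotheses this yields (i). This first stage is essentially bookkeeping around the earlier lemmas.

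For the finite case $X=X_n$ I would splice (iv) and (v) into the cycle. The easy half-implications are: (ii) $\Rightarrow$ (iv), where, writing $x_1\prec\cdots\prec x_n$ for the chain $(X_n,\preceq)$, Proposition~\ref{prop:degxe} gives $\deg_F(x_k)=2|\{z:z\prec x_k\}|=2(k-1)$, hence $\deg_F=(0,2,\ldots,2n-2)$; (ii) $\Rightarrow$ (v), where $\max_{\preceq}$ is visibly idempotent and, by Fact~\ref{fact:2}, has the neutral element $\min_{\preceq}X_n$ since a finite chain has a unique minimal element; and (v) $\Rightarrow$ (i), which follows immediately from Lemma~\ref{lemma:C-D}, as associativity, idempotency, $\leq$-preservation and a neutral element force quasitriviality while the remaining properties are assumed. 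The crux is (iv) $\Rightarrow$ (ii): recovering the whole structure from quasitriviality plus a single degree condition.

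For (iv) $\Rightarrow$ (ii) I would argue by induction on $n$, peeling off the top element. List $x_1,\ldots,x_n$ so that $\deg_F(x_k)=2(k-1)$. Since only points in the row and column of $z$ can take the value $z$ under a quasitrivial $F$, the bound $\deg_F(z)\le 2n-2$ always holds, and equality at $x_n$ forces $F(x_n,y)=F(y,x_n)=x_n$ for all $y$, i.e.\ $x_n$ is an annihilator. The restriction $F'$ of $F$ to $X_n\setminus\{x_n\}$ is again quasitrivial, and because the two deleted points $(x_k,x_n),(x_n,x_k)$ carry the value $x_n\ne x_k$, they never contributed to $\deg_F(x_k)$; hence $\deg_{F'}(x_k)=2(k-1)$ for $k<n$, so $F'$ satisfies hypothesis (iv) on an $(n-1)$-element set. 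By the induction hypothesis $F'=\max_{\preceq'}$, i.e.\ $F(x_i,x_j)=x_{\max(i,j)}$ for $i,j<n$, and the annihilator relations extend this to $F=\max_{\preceq}$ for the chain $x_1\prec\cdots\prec x_n$; the base case $n=1$ is trivial. I expect this induction to be the one step requiring genuine care: one must check that deleting the annihilator preserves both quasitriviality and the exact degree values.

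Finally, for the enumeration I would use (ii): every operation in the class equals $\max_{\preceq}$ for some total ordering $\preceq$ on $X_n$, and the assignment $\preceq\mapsto\max_{\preceq}$ is injective because $\max_{\preceq}(x,y)$ recovers the $\preceq$-larger of any two distinct $x,y$. Since there are $n!$ total orderings on $X_n$, the class has exactly $n!$ members. Uniqueness of $\preceq$ and its description via degrees follow from Corollary~\ref{cor:degdeg} applied to $F=\max_{\preceq}$ (a total ordering being a special weak ordering), which gives $x\preceq y\Leftrightarrow\deg_F(x)\le\deg_F(y)$. The statements about the neutral element $e=\min_{\preceq}X_n$ and the annihilator $a=\max_{\preceq}X_n$ then follow from Fact~\ref{fact:2}, using that a finite chain has a unique minimal and a unique maximal element.
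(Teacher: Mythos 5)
Your proposal is correct and follows essentially the same route as the paper: the cycle (i)$\Leftrightarrow$(ii)$\Leftrightarrow$(iii) via Theorem~\ref{thm:mainSy} and Lemma~\ref{lemma:QiA}, the implications (ii)$\Rightarrow$(v) via Fact~\ref{fact:2} and (v)$\Rightarrow$(i)/(iii) via Lemma~\ref{lemma:C-D}, the same peel-off-the-top-element induction for (iv)$\Rightarrow$(ii), and the same $n!$ count via total orderings. Your only additions are explicit justifications the paper leaves implicit (e.g., that $\deg_F(x_n)=2n-2$ forces $x_n$ to be an annihilator, so the restriction preserves quasitriviality and the degrees), which is a welcome elaboration rather than a different argument.
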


\begin{proof}
The equivalence (i) $\Leftrightarrow$ (ii) $\Leftrightarrow$ (iii) follows from Theorem~\ref{thm:mainSy} and Lemma~\ref{lemma:QiA}. We have (ii) $\Rightarrow$ (v) by Fact~\ref{fact:2} and (v) $\Rightarrow$ (iii) by Lemma~\ref{lemma:C-D}. Also, it is clear that (ii) $\Rightarrow$ (iv).

Let us now show by induction on $n$ that (iv) $\Rightarrow$ (ii). The result clearly holds for $n=1$. Suppose that it holds for some $n\geq 1$ and let us show that it still holds for $n+1$. Assume that $F\colon X_{n+1}^2\to X_{n+1}$ is quasitrivial and that $\deg_F=(0,2,\ldots,2n)$. Let $\preceq$ be the unique total ordering on $X_{n+1}$ defined by $x\preceq y$ if and only if $\deg_F(x)\leq\deg_F(y)$ and let $z=\max_{\preceq}X_{n+1}$. Clearly, the operation $F'=F|_{(X_{n+1}\setminus\{z\})^2}$ is quasitrivial and such that $\deg_{F'}=(0,2,\ldots,2n-2)$. By induction hypothesis we have $F'=\max_{\preceq'}$, where $\preceq'$ is the restriction of $\preceq$ to $(X_{n+1}\setminus\{z\})^2$. Since $\deg_F(z)=2n$ we necessarily have $F=\max_{\preceq}$.

To complete the proof of the theorem, we observe that there are exactly $n!$ total orderings on $X_n$ and hence exactly $n!$ operations $F\colon X_n^2\to X_n$ satisfying assertion (ii). The rest of the statement is immediate.
\end{proof}

\begin{remark}
The existence of a neutral element in assertion (v) of Theorem~\ref{thm:char1} cannot be replaced with the existence of an annihilator element. Indeed, the operation $F\colon X_3^2\to X_3$ whose contour plot is depicted in Figure~\ref{fig:re2} is associative, idempotent, commutative, $\leq_3$-preserving, and has the annihilator element $a=2$. However it is not quasitrivial.
\end{remark}

\setlength{\unitlength}{5ex}
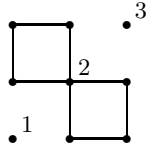
\begin{figure}[htbp]
\begin{center}
\begin{small}
\begin{picture}(3,3)
\multiput(0.5,0.5)(0,1){3}{\multiput(0,0)(1,0){3}{\circle*{0.15}}}
\drawline[1](0.5,2.5)(1.5,2.5)(1.5,0.5)(2.5,0.5)(2.5,1.5)(0.5,1.5)(0.5,2.5)
\put(0.75,0.75){\makebox(0,0){$1$}}\put(1.75,1.75){\makebox(0,0){$2$}}\put(2.75,2.75){\makebox(0,0){$3$}}
\end{picture}
\end{small}
\caption{An associative operation on $X_3$ that is not quasitrivial}
\label{fig:re2}
\end{center}
\end{figure}

We now consider the subclass of associative, quasitrivial, and commutative operations $F\colon X^2\to X$ that are $\leq$-preserving for some fixed total ordering $\leq$ on $X$. To this extent we recall the single-peakedness property for arbitrary totally ordered sets. This notion was first introduced for finite totally ordered sets (i.e., finite chains) in social choice theory by Black~\cite{Bla48,Bla87}.

\begin{definition}[{see \cite[Definition 3.8]{DevKisMar}}]\label{de:Black}
Let $\leq$ and $\preceq$ be total orderings on $X$. We say that $\preceq$ is \emph{single-peaked for $\leq$} if for any $a,b,c\in X$ such that $a<b<c$, we have $b\prec a$ or $b\prec c$.
\end{definition}

\begin{example}
There are four total orderings $\preceq$ on $X_3$ that are single-peaked for $\leq_3$, namely $1\prec 2\prec 3$, $2\prec 1\prec 3$, $2\prec 3\prec 1$, and $3\prec 2\prec 1$.
\end{example}

For arbitrary total orderings $\leq$ and $\preceq$ on $X$, the operation $F=\max_{\preceq}$ need not be $\leq$-preserving. The following proposition characterizes those total orderings $\preceq$ on $X$ for which $F=\max_{\preceq}$ is $\leq$-preserving.

\begin{proposition}[{see \cite[Proposition 3.9]{DevKisMar}}]\label{prop:Qspl}
Let $\leq$ be a total ordering on $X$ and let $F\colon X^2\to X$ be given by $F=\max_{\preceq}$ for some total ordering $\preceq$ on $X$. Then $F$ is $\leq$-preserving if and only if $\preceq$ is single-peaked for $\leq$.
\end{proposition}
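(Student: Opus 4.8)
The plan is to establish both implications by contraposition, exploiting the commutativity of $F=\max_{\preceq}$ to reduce $\leq$-preservation to monotonicity in a single argument. Since $F$ is commutative, it is $\leq$-preserving if and only if the section $x\mapsto F(x,c)$ is $\leq$-nondecreasing for every fixed $c\in X$; consequently, a failure of $\leq$-preservation always yields elements $a<b$ (for $\leq$) and some $c\in X$ with $F(a,c)>F(b,c)$ (for $\leq$). I would record this reduction first, as it lets me work with a single ``bad triple'' rather than a general rectangle.

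For the direction ``single-peaked $\Rightarrow$ $\leq$-preserving'', I would assume $F$ is not $\leq$-preserving and produce a violation of single-peakedness. Writing $u=F(a,c)=\max_{\preceq}(a,c)\in\{a,c\}$ and $v=F(b,c)=\max_{\preceq}(b,c)\in\{b,c\}$ with $u>v$ for $\leq$ and $a<b$ for $\leq$, only two cases survive: $u=c$ and $u=a$. In the first case the inequalities $a\preceq c$ and $c\preceq b$, together with $a<b<c$ for $\leq$, force $a\prec b$ and $c\prec b$, so the $\leq$-middle element $b$ is $\preceq$-above both ends; in the second case one similarly obtains $c<a<b$ for $\leq$ with $b\prec a$ and $c\prec a$. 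Either way the relevant $\leq$-middle element is the $\preceq$-largest of the three, contradicting single-peakedness.

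For the converse I would assume $\preceq$ is not single-peaked, i.e.\ there exist $a<b<c$ (for $\leq$) with $a\prec b$ and $c\prec b$, and exhibit explicitly a pair of inputs witnessing the failure of $\leq$-preservation. Splitting according to the $\preceq$-position of $c$ relative to $a$ (the only comparison not yet determined), if $a\prec c\prec b$ then $F(a,c)=c>b=F(b,c)$ while $(a,c)\leq(b,c)$ componentwise, and if $c\prec a\prec b$ then $F(b,a)=b>a=F(c,a)$ while $(b,a)\leq(c,a)$ componentwise; in both cases $\leq$-preservation fails.

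The computations themselves are short once this setup is in place, so the main obstacle is organizational rather than technical: correctly translating between the two orderings $\leq$ and $\preceq$, keeping track of which strict inequalities are available (notably that $a<b<c$ for $\leq$ forces pairwise distinctness, hence $\prec$ rather than merely $\preceq$), and verifying that the case analysis is exhaustive while discarding the impossible combinations $u=a,v=b$ and $u=c,v=c$.
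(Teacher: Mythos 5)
Your proof is correct. Note that the paper itself does not prove this proposition: it is imported verbatim from the cited reference \cite[Proposition 3.9]{DevKisMar}, so there is no in-paper argument to compare against. Your write-up is a complete, self-contained verification. The opening reduction is sound: for a commutative $F$, monotonicity of every section $x\mapsto F(x,c)$ does imply full $\leq$-preservation by the usual two-step chain $F(x,y)\leq F(x',y)\leq F(x',y')$, so a failure of $\leq$-preservation indeed produces a single bad triple $a<b$, $c$ with $F(a,c)>F(b,c)$. In the forward direction your elimination of the combinations $u=a,v=b$ and $u=c,v=c$ is right (they contradict $a<b$ and irreflexivity of $>$ respectively), and in each surviving case the $\leq$-middle element is correctly shown to be the $\preceq$-maximum of the triple, which is exactly the negation of single-peakedness; the passage from $\preceq$ to $\prec$ via pairwise distinctness and antisymmetry is handled properly. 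In the converse direction the two subcases $a\prec c$ and $c\prec a$ exhaust the possibilities since $a\neq c$ and $\preceq$ is total, and the exhibited pairs do witness the failure of $\leq$-preservation. No gaps.
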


The following theorem is an immediate consequence of Lemma~\ref{lemma:QiA}, Theorem~\ref{thm:char1}, Proposition~\ref{prop:Qspl} and the known fact (see also Section~\ref{sec:5}) that there are exactly $2^{n-1}$ total orderings on $X_n$ that are single-peaked for $\leq_n$.

\begin{theorem}\label{thm:cchar1}
Let $F\colon X^2\to X$ be an operation and let $\leq$ be a total ordering on $X$. The following assertions are equivalent.
\begin{enumerate}
\item[(i)] $F$ is quasitrivial, commutative, and $\leq$-preserving.
\item[(ii)] $F=\max_{\preceq}$ for some total ordering $\preceq$ on $X$ that is single-peaked for $\leq$.
\end{enumerate}
If $(X,\leq)=(X_n,\leq_n)$ for some integer $n\geq 1$, then any of the assertions (i)--(ii) above is equivalent to any of the following ones.
\begin{enumerate}
\item[(iii)] $F$ is quasitrivial, $\leq$-preserving, and satisfies $\deg_F=(0,2,4,\ldots,2n-2)$.
\item[(iv)] $F$ is associative, idempotent, commutative, $\leq$-preserving, and has a neutral element.
\end{enumerate}
Moreover, there are exactly $2^{n-1}$ operations $F\colon X_n^2\to X_n$ satisfying any of the assertions (i)--(iv). Furthermore, the total ordering $\preceq$ considered in assertion (ii) is uniquely defined as follows: $x\preceq y$ if and only if $\deg_F(x)\leq\deg_F(y)$. In particular, each of these operations has the (unique) neutral element $e=\min_{\preceq}X_n$ and the (unique) annihilator element $a=\max_{\preceq}X_n$.
\end{theorem}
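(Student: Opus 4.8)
The plan is to assemble the stated equivalences from the previously established results, treating (i) $\Leftrightarrow$ (ii) on an arbitrary $X$ first and then grafting on the finite-set assertions (iii) and (iv). For (i) $\Rightarrow$ (ii), I would first invoke Lemma~\ref{lemma:QiA} to upgrade the quasitrivial, commutative, $\leq$-preserving operation $F$ to an associative one; Theorem~\ref{thm:char1} then yields $F=\max_{\preceq}$ for a (unique) total ordering $\preceq$, and Proposition~\ref{prop:Qspl} forces $\preceq$ to be single-peaked for $\leq$ precisely because $F$ is $\leq$-preserving. The converse (ii) $\Rightarrow$ (i) is immediate: Theorem~\ref{thm:char1} gives that $\max_{\preceq}$ is associative, quasitrivial, and commutative, and Proposition~\ref{prop:Qspl} supplies $\leq$-preservation from single-peakedness.

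For the finite case I would close a short cycle of implications through the two already-available reformulations. Observe that assertion (iii) differs from assertion (iv) of Theorem~\ref{thm:char1} only by the extra $\leq$-preservation hypothesis, so (iii) $\Rightarrow$ (i) follows from Theorem~\ref{thm:char1} (with $\leq$-preservation carried along), while (ii) $\Rightarrow$ (iii) follows from the implication (ii) $\Rightarrow$ (iv) of Theorem~\ref{thm:char1} together with Proposition~\ref{prop:Qspl}. For assertion (iv) I would argue (ii) $\Rightarrow$ (iv) using Theorem~\ref{thm:char1} and Fact~\ref{fact:2} (the neutral element being $\min_{\preceq}X_n$), and (iv) $\Rightarrow$ (i) by invoking Lemma~\ref{lemma:C-D} to recover quasitriviality from associativity, idempotency, $\leq$-preservation, and the neutral element, the remaining properties being present by hypothesis.

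The enumeration and uniqueness statements then come for free. By the uniqueness clause of Theorem~\ref{thm:char1}, the map $\preceq\mapsto\max_{\preceq}$ is injective on total orderings, since the ordering is recovered from $F$ via $x\preceq y\Leftrightarrow\deg_F(x)\leq\deg_F(y)$; hence the operations satisfying (ii) are in bijection with the total orderings on $X_n$ that are single-peaked for $\leq_n$. The count $2^{n-1}$ is then exactly the number of such orderings, which I would cite as the known fact established in Section~\ref{sec:5}. The characterization of the neutral and annihilator elements as $e=\min_{\preceq}X_n$ and $a=\max_{\preceq}X_n$ transfers verbatim from Theorem~\ref{thm:char1}.

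The main obstacle, if any, is purely bookkeeping: there is no hard new step, as every implication reduces to a cited lemma, proposition, or the companion Theorem~\ref{thm:char1}. The only points requiring slight care are applying Proposition~\ref{prop:Qspl} in both directions (single-peakedness $\Leftrightarrow$ $\leq$-preservation of $\max_{\preceq}$) and the deferral of the $2^{n-1}$ count to Section~\ref{sec:5}, which keeps the present proof self-contained modulo that one combinatorial fact.
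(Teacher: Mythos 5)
Your proposal is correct and follows essentially the same route as the paper, which simply declares Theorem~\ref{thm:cchar1} an immediate consequence of Lemma~\ref{lemma:QiA}, Theorem~\ref{thm:char1}, Proposition~\ref{prop:Qspl}, and the known count of $2^{n-1}$ single-peaked total orderings; your write-up just makes the bookkeeping explicit (including the correct use of Lemma~\ref{lemma:C-D} and Fact~\ref{fact:2}, which the paper uses inside the proof of Theorem~\ref{thm:char1}).
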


\begin{example}
In Figure~\ref{fig:avre3} we present the $3!=6$ associative, quasitrivial, and commutative operations on $X_3$. Only the first $2^{3-1}=4$ operations are $\leq_3$-preserving. All these operations have neutral and annihilator elements.
\end{example}

\setlength{\unitlength}{3.5ex}
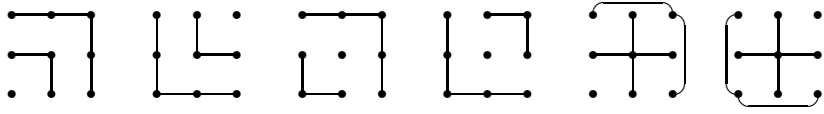
\begin{figure}[htbp]
\begin{center}
\begin{small}
\begin{picture}(3,3)
\multiput(0.5,0.5)(0,1){3}{\multiput(0,0)(1,0){3}{\circle*{0.2}}}
\drawline[1](0.5,2.5)(2.5,2.5)(2.5,0.5)\drawline[1](0.5,1.5)(1.5,1.5)(1.5,0.5)
\end{picture}
\hspace{0.01\textwidth}
\begin{picture}(3,3)
\multiput(0.5,0.5)(0,1){3}{\multiput(0,0)(1,0){3}{\circle*{0.2}}}
\drawline[1](0.5,2.5)(0.5,0.5)(2.5,0.5)\drawline[1](1.5,2.5)(1.5,1.5)(2.5,1.5)
\end{picture}
\hspace{0.01\textwidth}
\begin{picture}(3,3)
\multiput(0.5,0.5)(0,1){3}{\multiput(0,0)(1,0){3}{\circle*{0.2}}}
\drawline[1](0.5,2.5)(2.5,2.5)(2.5,0.5)\drawline[1](0.5,1.5)(0.5,0.5)(1.5,0.5)
\end{picture}
\hspace{0.01\textwidth}
\begin{picture}(3,3)
\multiput(0.5,0.5)(0,1){3}{\multiput(0,0)(1,0){3}{\circle*{0.2}}}
\drawline[1](0.5,2.5)(0.5,0.5)(2.5,0.5)\drawline[1](1.5,2.5)(2.5,2.5)(2.5,1.5)
\end{picture}
\hspace{0.01\textwidth}
\begin{picture}(3,3)
\multiput(0.5,0.5)(0,1){3}{\multiput(0,0)(1,0){3}{\circle*{0.2}}}
\drawline[1](0.5,1.5)(2.5,1.5)\drawline[1](1.5,0.5)(1.5,2.5)
\put(2.5,1.5){\oval(0.6,2)[r]}\put(1.5,2.5){\oval(2,0.6)[t]}
\end{picture}
\hspace{0.01\textwidth}
\begin{picture}(3,3)
\multiput(0.5,0.5)(0,1){3}{\multiput(0,0)(1,0){3}{\circle*{0.2}}}
\drawline[1](0.5,1.5)(2.5,1.5)\drawline[1](1.5,0.5)(1.5,2.5)
\put(0.5,1.5){\oval(0.6,2)[l]}\put(1.5,0.5){\oval(2,0.6)[b]}
\end{picture}
\end{small}
\caption{The six associative, quasitrivial, and commutative operations on $X_3$}
\label{fig:avre3}
\end{center}
\end{figure}

\begin{remark}\label{rem:3ab}
\begin{enumerate}
\item[(a)] To better illustrate Theorem~\ref{thm:cchar1} when $X$ is finite, consider the operation $F\colon X_6^2\to X_6$ whose contour plot is shown in Figure~\ref{fig:r3a} (left). This operation is clearly quasitrivial, $\leq_6$-preserving, and is such that $\deg_F=(0,2,\ldots,10)$. By Theorem~\ref{thm:cchar1} we then have $F=\max_{\preceq}$, where $\preceq$ is the total ordering on $X_6$ obtained by sorting the numbers $\deg_F(x)$, $x\in X_6$, in increasing order, that is, $4\prec 3\prec 5\prec 2\prec 1\prec 6$; see Figure~\ref{fig:r3a} (right). This total ordering is single-peaked for $\leq_6$ (see also Example~\ref{ex:2fl}).
\item[(b)] The equivalence between assertions (i) and (ii) of Theorem~\ref{thm:cchar1} was established in \cite[Theorem 3.13]{DevKisMar}. When $X$ is finite, the equivalence among assertions (i), (ii), and (iv) of Theorem~\ref{thm:cchar1} was established in \cite[Theorems 12 and 17]{CouDevMar}.
\end{enumerate}
\end{remark}

\setlength{\unitlength}{3.5ex}
\begin{figure}[htbp]
\begin{center}
\begin{small}
\null\hspace{0.03\textwidth}
\begin{picture}(8,8)
\put(0.5,0.5){\vector(1,0){7}}\put(0.5,0.5){\vector(0,1){7}}
\multiput(1.5,0.45)(1,0){6}{\line(0,1){0.1}}%
\multiput(0.45,1.5)(0,1){6}{\line(1,0){0.1}}%
\put(1.5,0){\makebox(0,0){$1$}}\put(2.5,0){\makebox(0,0){$2$}}\put(3.5,0){\makebox(0,0){$3$}}
\put(4.5,0){\makebox(0,0){$4$}}\put(5.5,0){\makebox(0,0){$5$}}\put(6.5,0){\makebox(0,0){$6$}}
\put(0,1.5){\makebox(0,0){$1$}}\put(0,2.5){\makebox(0,0){$2$}}\put(0,3.5){\makebox(0,0){$3$}}
\put(0,4.5){\makebox(0,0){$4$}}\put(0,5.5){\makebox(0,0){$5$}}\put(0,6.5){\makebox(0,0){$6$}}
\put(2,0){\makebox(0,0){$<$}}\put(3,0){\makebox(0,0){$<$}}\put(4,0){\makebox(0,0){$<$}}
\put(5,0){\makebox(0,0){$<$}}\put(6,0){\makebox(0,0){$<$}}
\multiput(1.5,1.5)(0,1){6}{\multiput(0,0)(1,0){6}{\circle*{0.2}}}
\drawline[1](1.5,6.5)(6.5,6.5)(6.5,1.5)\drawline[1](1.5,5.5)(1.5,1.5)(5.5,1.5)\drawline[1](2.5,5.5)(2.5,2.5)(5.5,2.5)
\drawline[1](3.5,5.5)(5.5,5.5)(5.5,3.5)\drawline[1](3.5,4.5)(3.5,3.5)(4.5,3.5)
\end{picture}
\hspace{0.1\textwidth}
\begin{picture}(8,8)
%\thicklines
\put(0.5,0.5){\vector(1,0){7}}\put(0.5,0.5){\vector(0,1){7}}
\multiput(1.5,0.45)(1,0){6}{\line(0,1){0.1}}%
\multiput(0.45,1.5)(0,1){6}{\line(1,0){0.1}}%
\put(1.5,0){\makebox(0,0){$4$}}\put(2.5,0){\makebox(0,0){$3$}}\put(3.5,0){\makebox(0,0){$5$}}
\put(4.5,0){\makebox(0,0){$2$}}\put(5.5,0){\makebox(0,0){$1$}}\put(6.5,0){\makebox(0,0){$6$}}
\put(0,1.5){\makebox(0,0){$4$}}\put(0,2.5){\makebox(0,0){$3$}}\put(0,3.5){\makebox(0,0){$5$}}
\put(0,4.5){\makebox(0,0){$2$}}\put(0,5.5){\makebox(0,0){$1$}}\put(0,6.5){\makebox(0,0){$6$}}
\put(2,0){\makebox(0,0){$\prec$}}\put(3,0){\makebox(0,0){$\prec$}}\put(4,0){\makebox(0,0){$\prec$}}
\put(5,0){\makebox(0,0){$\prec$}}\put(6,0){\makebox(0,0){$\prec$}}
\multiput(1.5,1.5)(0,1){6}{\multiput(0,0)(1,0){6}{\circle*{0.2}}}
\drawline[1](1.5,6.5)(6.5,6.5)(6.5,1.5)\drawline[1](1.5,5.5)(5.5,5.5)(5.5,1.5)\drawline[1](1.5,4.5)(4.5,4.5)(4.5,1.5)
\drawline[1](1.5,3.5)(3.5,3.5)(3.5,1.5)\drawline[1](1.5,2.5)(2.5,2.5)(2.5,1.5)
\end{picture}
\end{small}
\caption{An operation $F\colon X_6^2\to X_6$ defined by $F=\max_{\preceq}$, where $\preceq$ is single-peaked for $\leq_6$}
\label{fig:r3a}
\end{center}
\end{figure}
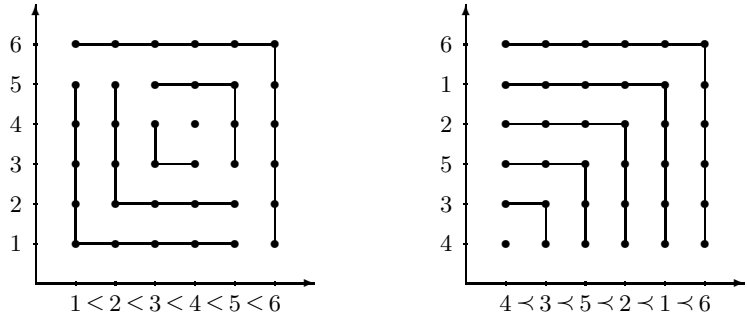

%---------------------------------------------------------------------------------------
\section{Enumerations of arbitrary quasitrivial semigroups}

This section is devoted to the arbitrary associative and quasitrivial operations that need not be commutative. Recall that a characterization of this class of operations is given in Theorem~\ref{thm:kimura}. However, to our knowledge a generalization of Theorem~\ref{thm:char1} to noncommutative operations is not known and hence remains an open problem. On this issue we make the following two observations.
\begin{itemize}
\item An associative and quasitrivial operation $F\colon X^2\to X$ need not have a neutral element, even if $X$ is finite. For instance, the projection operations $\pi_1$ and $\pi_2$ have no neutral element.
\item An associative and quasitrivial operation $F\colon X^2\to X$ need not be $\leq$-preserving for some total ordering $\leq$ on $X$, even if $X$ is finite. To illustrate, consider $F\colon X_4^2\to X_4$ whose contour plot is depicted in Figure~\ref{fig:nop}. This operation is associative and quasitrivial. However, it can be shown that it is not $\leq$-preserving for any of the 24 total orderings $\leq$ on $X_4$.
\end{itemize}

\setlength{\unitlength}{3.5ex}
\begin{figure}[htbp]
\begin{center}
\begin{small}
\begin{picture}(4,4)
\multiput(0.5,0.5)(0,1){4}{\multiput(0,0)(1,0){4}{\circle*{0.2}}}
\drawline[1](0.5,3.5)(3.5,3.5)\drawline[1](0.5,2.5)(3.5,2.5)\drawline[1](2.5,2.5)(2.5,1.5)
\drawline[1](0.5,1.5)(0.5,0.5)(3.5,0.5)\put(3.5,2.5){\oval(0.6,2)[r]}
\end{picture}
\end{small}
\caption{An operation that is not $\leq$-preserving for any total ordering $\leq$}
\label{fig:nop}
\end{center}
\end{figure}
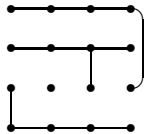

In the rest of this section we consider the problem of enumerating quasitrivial semigroups on finite sets. For instance, for any integer $n\geq 1$, we provide in Theorem~\ref{thm:Nk} the exact number of associative and quasitrivial operations $F\colon X_n^2\to X_n$. We posted the corresponding sequence in Sloane's On-Line Encyclopedia of Integer Sequences (OEIS, see \cite{Slo}) as sequence A292932.

In this section we often consider either the (ordinary) generating function (GF) or the exponential generating function (EGF) of a given integer sequence $(s_n)_{n\geq 0}$. Recall that, when these functions exist, they are respectively defined by the power series
$$
S(z) ~=~ \sum_{n\geq 0}s_n{\,}z^n\quad\text{and}\quad \hat S(z) ~=~ \sum_{n\geq 0}s_n{\,}\frac{z^n}{n!}{\,}.
$$

Recall also that for any integers $0\leq k\leq n$, the \emph{Stirling number of the second kind} ${n\brace k}$ is defined by
$$
{n\brace k} ~=~ \frac{1}{k!}{\,}\sum_{i=0}^k(-1)^{k-i}{k\choose i}{\,}i^{n}.
$$

For any integer $n\geq 0$, let $p(n)$ denote the number of weak orderings on $X_n$, or equivalently, the number of ordered partitions of $X_n$. Setting $p(0)=1$, the number $p(n)$ is explicitly given by
$$
p(n) ~=~ \sum_{k=0}^n{n\brace k}{\,}k!{\,},\qquad n\geq 0.
$$
Actually, the corresponding sequence $(p(n))_{n\geq 0}$ consists of the \emph{ordered Bell numbers} (Sloane's A000670) and satisfies the following recurrence equation
$$
p(n+1) ~=~ \sum_{k=0}^n{n+1\choose k}{\,}p(k){\,},\qquad n\geq 0,
$$
with $p(0)=1$. Moreover, its EGF is given by $\hat P(z) = 1/(2-e^z)$.

For any integer $n\geq 1$, we denote by $q(n)$ the number of associative and quasitrivial operations $F\colon X_n^2\to X_n$ (i.e., the number of quasitrivial semigroups on an $n$-element set). As a convention, we set $q(0)=1$. Also, for any integer $n\geq 0$, we denote by
\begin{itemize}
\item $q_e(n)$ the number of associative and quasitrivial operations $F\colon X_n^2\to X_n$ that have neutral elements,
\item $q_a(n)$ the number of associative and quasitrivial operations $F\colon X_n^2\to X_n$ that have annihilator elements,
\item $q_{ea}(n)$ the number of associative and quasitrivial operations $F\colon X_n^2\to X_n$ that have distinct neutral and annihilator elements.
\end{itemize}
As a convention, we set $q_e(0)=q_a(0)=q_{ea}(0)=0$. Theorem~\ref{thm:Nk} and Proposition~\ref{prop:qe} below provide explicit formulas for these sequences. The first few values of these sequences are shown in Table~\ref{tab:q}.

\begin{theorem}\label{thm:Nk}
For any integer $n\geq 0$, we have the closed-form expression
\begin{equation}\label{eq:Nk}
q(n) ~=~ \sum_{i=0}^n 2^i{\,}\sum_{k=0}^{n-i}(-1)^k{\,}{n\choose k}{n-k\brace i}{\,}(i+k)!{\,},\qquad n\geq 0.
\end{equation}
Moreover, the sequence $(q(n))_{n\geq 0}$ satisfies the recurrence equation
$$
q(n+1) ~=~ (n+1){\,}q(n)+2\sum_{k=0}^{n-1}{n+1\choose k}{\,}q(k){\,},\qquad n\geq 0,
$$
with $q(0)=1$. Furthermore, its EGF is given by $\hat Q(z)=1/(z+3-2e^z)$.
\end{theorem}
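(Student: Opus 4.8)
The plan is to derive all three assertions from a single counting identity furnished by Theorem~\ref{thm:kimura}. First I would observe that sending $F$ to the pair consisting of its associated weak ordering $\precsim$ (equivalently, an ordered partition $P$ of $X_n$) together with, for each equivalence class $A$ of $\sim$, the choice of whether $F|_{A\times A}$ equals $\pi_1$ or $\pi_2$, is a bijection between associative and quasitrivial operations on $X_n$ and such decorated ordered partitions: the between-class behaviour is forced to be $\max_\precsim$, the weak ordering is recovered from $F$ (as noted after Theorem~\ref{thm:kimura}), and the intra-block choices are recovered directly. Since $\pi_1|_{A\times A}=\pi_2|_{A\times A}$ exactly when $|A|=1$, each block of size $\geq 2$ carries an independent binary choice while each singleton carries none. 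Hence
\begin{equation*}
q(n) ~=~ \sum_{P}2^{b(P)},
\end{equation*}
where the sum runs over all ordered partitions $P$ of $X_n$ and $b(P)$ is the number of blocks of $P$ of size $\geq 2$. Everything will be extracted from this identity.

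For the closed form \eqref{eq:Nk} I would write the block weight as $w(A)=2-[\,|A|=1\,]$, so that $w(A)=1$ for singletons and $w(A)=2$ for larger blocks, giving $q(n)=\sum_P\prod_A w(A)$. Expanding the product block by block, the summand $-[\,|A|=1\,]$ survives only on singleton blocks, so the expansion amounts to choosing a set $S$ of \emph{marked} singletons (each contributing the factor $-1$) while every remaining block contributes the factor $2$. Reindexing the resulting double sum by $k=|S|$ and $i=$ (number of unmarked blocks) turns each pair $(P,S)$ into the data of: a choice of the $k$ marked elements ($\binom{n}{k}$ ways), an unordered partition of the remaining $n-k$ elements into $i$ blocks (${n-k\brace i}$ ways), a linear order of all $i+k$ blocks ($(i+k)!$ ways), and a $2$-colouring of the $i$ unmarked blocks ($2^i$ ways), with total sign $(-1)^k$. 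Summing over the region $i,k\geq 0$, $i+k\leq n$ reproduces \eqref{eq:Nk} exactly.

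For the exponential generating function I would invoke the symbolic method for labelled structures: a single weighted block has EGF $B(z)=z+2\sum_{m\geq 2}z^m/m!=2e^z-2-z$ (weight $1$ for a singleton, weight $2$ for each larger block), and an ordered partition is a sequence of such blocks, so $\hat Q(z)=\sum_{j\geq 0}B(z)^j=1/(1-B(z))=1/(z+3-2e^z)$; note $B(0)=0$ recovers $q(0)=1$. The recurrence then follows by clearing denominators: from $(z+3-2e^z)\hat Q(z)=1$, extracting the coefficient of $z^{n+1}/(n+1)!$ and isolating the $k=n$ and $k=n+1$ terms of the convolution arising from $e^z\hat Q(z)$ yields $q(n+1)=(n+1)q(n)+2\sum_{k=0}^{n-1}\binom{n+1}{k}q(k)$.

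I expect the main obstacle to be the bookkeeping in the closed-form step—correctly justifying the inclusion–exclusion over singletons and matching the reindexed sum to the precise ranges in \eqref{eq:Nk}—whereas the structural reduction and the EGF and recurrence steps are essentially mechanical once the weighted-ordered-partition identity is in place.
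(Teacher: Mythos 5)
Your proposal is correct, and for two of the three assertions it coincides with the paper's argument: the weighted ordered-partition identity $q(n)=\sum_P 2^{b(P)}$ is exactly the paper's Eq.~\eqref{eq:Inqn}, your symbolic-method computation $\hat Q(z)=1/(1-B(z))$ with $B(z)=2e^z-z-2$ is the same as the paper's $k$-fold binomial convolution of $(\min\{n,2\})_{n\geq 0}$ summed over $k$, and extracting the coefficient of $z^{n+1}/(n+1)!$ from $(z+3-2e^z)\hat Q(z)=1$ is the paper's Leibniz-rule step. Where you genuinely diverge is the closed form \eqref{eq:Nk}: the paper \emph{verifies} it by computing the EGF of the right-hand side (writing it as $\sum_i 2^i h_n^i$ with $h_n^i$ a binomial convolution whose EGF is $(e^z-1)^i/(z+1)^{i+1}$, then summing a geometric-type series back to $\hat Q$), whereas you \emph{derive} it combinatorially by expanding the block weight $w(A)=2-[\,|A|=1\,]$, which is a sign-reversing inclusion--exclusion over marked singletons, and then reindexing by $k=|S|$ and $i$ the number of unmarked blocks to read off $(-1)^k\binom{n}{k}{n-k\brace i}(i+k)!\,2^i$ directly. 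I checked this expansion against the stated ranges and against $q(0),q(1),q(2)=1,1,4$; it is correct and arguably more transparent than the paper's route, since it explains where the alternating sum and the Stirling numbers come from rather than matching generating functions a posteriori. (It is also distinct from the paper's appendix proof of \eqref{eq:Nk}, which uses an integral linear functional $T_k$ on polynomials.) The only point you should make explicit in a written version is that the weak ordering $\precsim$ and the per-block projection choices are uniquely recoverable from $F$ (the paper notes this just after Theorem~\ref{thm:kimura}), so that your map to decorated ordered partitions is indeed a bijection and no overcounting occurs.
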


\begin{proof}
Using Theorem~\ref{thm:kimura} we can easily see that
\begin{equation}\label{eq:Inqn}
q(n) ~=~ \sum_{k=1}^n ~\sum_{\textstyle{n_1,\ldots,n_k\geq 1\atop n_1+\cdots +n_k=n}}{n\choose n_1,\ldots,n_k}\prod_{\textstyle{i=1\atop n_i\geq 2}}^k 2{\,},\qquad n\geq 1.
\end{equation}
Indeed, to compute $q(n)$ we need to consider all the ordered partitions of $X_n$ and count twice each equivalence class containing at least two elements (because two possible projections are to be considered for each such class). In Eq.~\eqref{eq:Inqn}, $k$ represents the number of equivalence classes and $n_i$ represents the cardinality of the $i$th class.

For any integer $k\geq 1$, define the sequence $(s_n^k)_{n\geq 0}$ as
\begin{equation}\label{eq:DeSkn}
s_n^k ~=~ \sum_{\textstyle{n_1,\ldots,n_k\geq 0\atop n_1+\cdots +n_k=n}}{n\choose n_1,\ldots,n_k}\prod_{i=1}^k \min\{n_i,2\}.
\end{equation}
Thus defined, the sequence $(s_n^k)_{n\geq 0}$ is the $k$-fold binomial convolution of the sequence $(\min\{n,2\})_{n\geq 0}$ (for background on convolutions see, e.g., \cite[Section 7.2.1]{MarTo16}). Since the EGF of the latter sequence is clearly the function $z\mapsto 2e^z-z-2$, it follows that the EGF of the sequence $(s_n^k)_{n\geq 0}$ is the function $z\mapsto (2e^z-z-2)^k$, which means that
\begin{equation}\label{eq:snkD}
s_n^k ~=~ D_z^n (2e^z-z-2)^k|_{z=0}{\,},
\end{equation}
where $D_z$ denotes the usual differential operator.

Using \eqref{eq:Inqn}--\eqref{eq:snkD}, for any integer $n\geq 1$, we then obtain
$$
q(n) ~=~ \sum_{k=1}^n s_n^k ~=~ D_z^n{\,}\frac{1-(2e^z-z-2)^{n+1}}{z+3-2e^z}\Big|_{z=0} ~=~ D_z^n{\,}\frac{1}{z+3-2e^z}\Big|_{z=0}.
$$
Since $q(0)=1$ by definition, we thus see that the EGF of the sequence $(q(n))_{n\geq 0}$ is given by $\hat Q(z)=(z+3-2e^z)^{-1}$.

Now, by taking the $(n+1)$st derivative at $z=0$ of both sides of the identity
$$
(z+3-2{\,}e^z){\, }\hat Q(z)~=~1
$$
(using the general Leibniz rule) we immediately derive the claimed recurrence equation for the sequence $(q(n))_{n\geq 0}$.

Let us now establish Eq.~\eqref{eq:Nk}. It is enough to show that the EGF of the sequence $(\tilde{q}(n))_{n\geq 0}$ defined by $\tilde{q}(0)=1$ and
$$
\tilde{q}(n) ~=~ \sum_{i=0}^n 2^i{\,}\sum_{k=0}^{n-i}(-1)^k{\,}{n\choose k}{n-k\brace i}{\,}(i+k)!{\,},\qquad n\geq 1,
$$
is exactly $\hat Q(z)$.

For any integer $i\geq 0$, consider the sequences $(f_n^i)_{n\geq 0}$ and $(g_n^i)_{n\geq 0}$ defined by $f_n^i=(-1)^n(n+i)!$ and $g_n^i={n\brace i}$. Define also the sequence $(h_n^i)_{n\geq 0}$ by the binomial convolution of $(f_n^i)_{n\geq 0}$ and $(g_n^i)_{n\geq 0}$, that is,
$$
h_n^i ~=~ \sum_{k=0}^n{n\choose k}(-1)^k(i+k)!{n-k\brace i}.
$$
Observing that ${n-k\brace i}=0$ if $n-k<i$ we see that
\begin{equation}\label{eq:q2h}
\tilde{q}(n) ~=~ \sum_{i=0}^n 2^ih_n^i{\,},\qquad n\geq 0.
\end{equation}
Let $\hat F_i(z)$, $\hat G_i(z)$, and $\hat H_i(z)$ be the EGFs of the sequences $(f_n^i)_{n\geq 0}$, $(g_n^i)_{n\geq 0}$, and $(h_n^i)_{n\geq 0}$, respectively. It is known (see, e.g., \cite[p.~335, p.~351]{GraKnuPat94}) that $\hat F_i(z)=i!(z+1)^{-i-1}$ and $\hat G_i(z)=(e^z-1)^i/i!$. We then have
$$
\hat H_i(z) ~=~ \hat F_i(z)\hat G_i(z) ~=~ \frac{(e^z-1)^i}{(z+1)^{i+1}}{\,}.
$$
Since $h_n^i=D_z^n \hat H_i(z)|_{z=0}$, using \eqref{eq:q2h} we obtain
$$
\tilde{q}(n) ~=~ D_z^n{\,}\frac{1-\big(2\frac{e^z-1}{z+1}\big)^{n+1}}{z+3-2e^z}\Big|_{z=0} ~=~ D_z^n{\,}\frac{1}{z+3-2e^z}\Big|_{z=0} ~=~ (D_z^n{\,}\hat Q)(0).
$$
This means that the EGF of $(\tilde{q}(n))_{n\geq 0}$ is given by $\hat Q(z)$. This completes the proof.
\end{proof}

\begin{remark}\label{rem:r4zt}
\begin{enumerate}
\item[(a)] It is clear that the radius $r$ of convergence of the series $\hat Q(z)$ is less than or equal to the closest singularity ($\approx 0.583$) to the origin of the real function $x\mapsto 1/(x+3-2e^x)$. We conjecture that $r$ is given by the classical ratio test and corresponds exactly to that singularity. In mathematical terms, this amounts to proving (or disproving) that
$$
\frac{q(n+1)}{(n+1){\,}q(n)} ~\to ~\frac{1}{r} ~\approx ~ 1.715\qquad\text{as $~n\to\infty$},
$$
where $r\approx 0.583$ is the unique positive zero of the real function $x\mapsto x+3-2e^x$.
\item[(b)] In the proof of Theorem~\ref{thm:Nk} we have established Eq.~\eqref{eq:Nk} by first searching for the explicit form of $\hat Q(z)$ from the definition of the sequence $(q(n))_{n\geq 0}$. In the appendix we provide an alternative proof of \eqref{eq:Nk} that does not make use of $\hat Q(z)$.
\end{enumerate}
\end{remark}

\begin{proposition}\label{prop:qe}
For any integer $n\geq 0$, we have $q_e(n)=q_a(n)=n{\,}q(n-1)$ and $q_{ea}(n)=n(n-1){\,}q(n-2)$.
\end{proposition}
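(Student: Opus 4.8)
The plan is to prove each identity by a bijection that ``peels off'' the distinguished element(s). The observation used throughout is that quasitriviality makes every subset $S\subseteq X_n$ closed under $F$: if $x,y\in S$, then $F(x,y)\in\{x,y\}\subseteq S$. Consequently the restriction of any associative and quasitrivial operation to $S^2$ is again quasitrivial, and it is again associative since all intermediate values stay inside $S$. This supplies the ``forward'' direction of every bijection below for free. I shall also use the elementary facts that a neutral element and an annihilator element are each unique whenever they exist, and that adjoining a neutral element (resp.\ an annihilator) to an associative operation again yields an associative operation.

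For $q_e(n)=n\,q(n-1)$, I would fix a candidate element $e\in X_n$ and set up a bijection between the associative and quasitrivial operations on $X_n$ whose neutral element is $e$ and the associative and quasitrivial operations on the $(n-1)$-element set $X_n\setminus\{e\}$. The forward map sends $F$ to its restriction $F|_{(X_n\setminus\{e\})^2}$, which is associative and quasitrivial by the observation above. The backward map sends an operation $G$ on $X_n\setminus\{e\}$ to the extension $F$ obtained by declaring $e$ to be a neutral element, i.e.\ $F(e,x)=F(x,e)=x$ for all $x\in X_n$ and $F=G$ on $(X_n\setminus\{e\})^2$; this $F$ is quasitrivial by construction and associative because adjoining a neutral element preserves associativity. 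The two maps are visibly mutually inverse. Since the neutral element is unique, the operations attached to distinct choices of $e$ are pairwise distinct, so summing over the $n$ possible values of $e$ gives $q_e(n)=n\,q(n-1)$. The identity $q_a(n)=n\,q(n-1)$ follows from the dual argument, with ``neutral element'' replaced by ``annihilator element'' and the corresponding fact that adjoining an annihilator preserves associativity (alternatively, one may invoke Fact~\ref{fact:2} and reverse the underlying weak ordering).

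For $q_{ea}(n)=n(n-1)\,q(n-2)$, I would fix an ordered pair $(e,a)$ of distinct elements of $X_n$ and build a bijection between the associative and quasitrivial operations on $X_n$ having $e$ as neutral element and $a$ as annihilator element, and the associative and quasitrivial operations on the $(n-2)$-element set $X_n\setminus\{e,a\}$. The forward map is again restriction. For the backward map, starting from $G$ on the core $X_n\setminus\{e,a\}$, I would first adjoin $a$ as an annihilator and then adjoin $e$ as a neutral element. The one point needing care is consistency at the pair $(e,a)$: the neutral rule forces $F(e,a)=a$ and $F(a,e)=a$, while the annihilator rule forces exactly the same two values, so the definition is unambiguous. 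The main obstacle of the whole proof is precisely verifying associativity of this doubly extended operation; performing the extension in two stages avoids the full case analysis, since adjoining $a$ as an annihilator to the semigroup $(X_n\setminus\{e,a\},G)$ yields a semigroup in which $a$ is an annihilator, and then adjoining $e$ as a neutral element yields a semigroup in which $e$ is neutral and $a$ remains an annihilator. As neutral and annihilator elements are each unique, distinct ordered pairs produce distinct operations, and summing over the $n(n-1)$ choices of $(e,a)$ gives $q_{ea}(n)=n(n-1)\,q(n-2)$. Finally, the boundary conventions $q_e(0)=q_a(0)=q_{ea}(0)=0$ are consistent with these formulas because of the vanishing prefactors.
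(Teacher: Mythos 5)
Your proof is correct and follows essentially the same approach as the paper's: choose the distinguished neutral (and/or annihilator) element and put the remaining operations in bijection with associative quasitrivial operations on the smaller set via restriction. You simply spell out the details (closure under restriction, associativity of the extensions, uniqueness of neutral/annihilator elements, consistency at the pair $(e,a)$) that the paper's proof leaves implicit.
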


\begin{proof}
Let us first show how we can construct an arbitrary associative and quasitrivial operation $F\colon X_n^2\to X_n$ having a neutral element. There are $n$ ways to choose the neutral element $e$ in $X_n$. Then we observe that the restriction of $F$ to $(X_n\setminus\{e\})^2$ is still an associative and quasitrivial operation, so we have $q(n-1)$ possible choices to construct this restriction. This shows that $q_e(n)=n{\,}q(n-1)$. Using the same reasoning, we also obtain $q_a(n)=n{\,}q(n-1)$ and $q_{ea}(n)=n(n-1){\,}q(n-2)$.
\end{proof}

\begin{table}[htbp]
$$
\begin{array}{|c|rrrr|}
\hline n & q(n) & q_e(n) & q_a(n) & q_{ea}(n) \\
\hline 0 & 1 & 0 & 0 & 0 \\
1 & 1 & 1 & 1 & 0 \\
2 & 4 & 2 & 2 & 2 \\
3 & 20 & 12 & 12 & 6 \\
4 & 138 & 80 & 80 & 48 \\
5 & 1{\,}182 & 690 & 690 & 400 \\
6 & 12{\,}166 & 7{\,}092 & 7{\,}092 & 4{\,}140 \\
\hline
\mathrm{OEIS}^{\mathstrut} & \mathrm{A292932} & \mathrm{A292933} & \mathrm{A292933} & \mathrm{A292934} \\
\hline
\end{array}
$$
\caption{First few values of $q(n)$, $q_e(n)$, $q_a(n)$, and $q_{ea}(n)$}
\label{tab:q}
\end{table}

We now consider the subclass of associative and quasitrivial operations $F\colon X_n^2\to X_n$ that are $\leq_n$-preserving. To this extent, we introduce a generalization of single-peakedness to weak orderings, that we call \emph{weak single-peakedness}. This leads to a generalization of Proposition~\ref{prop:Qspl} to arbitrary quasitrivial semigroups (see Proposition~\ref{prop:Qwspl}). We will further elaborate on this concept in Section~\ref{sec:5}.

\begin{definition}\label{de:wBlack}
Let $\leq$ be a total ordering on $X$ and let $\precsim$ be a weak ordering on $X$. We say that $\precsim$ is \emph{weakly single-peaked for $\leq$} if for any $a,b,c\in X$ such that $a<b<c$, we have $b\prec a$ or $b\prec c$ or $a\sim b\sim c$.
\end{definition}

\begin{proposition}\label{prop:Qwspl}
Let $\leq$ be a total ordering on $X$ and let $\precsim$ be a weak ordering on $X$. Suppose that $F\colon X^2\to X$ is of the form \eqref{eq:kimura}. Then $F$ is $\leq$-preserving if and only if $\precsim$ is weakly single-peaked for $\leq$.
\end{proposition}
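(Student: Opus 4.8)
The plan is to prove both implications after reducing joint $\leq$-preservation to separate monotonicity in each argument. Indeed, if $F$ is nondecreasing (for $\leq$) in each of its two arguments separately, then for $x\leq x'$ and $y\leq y'$ one has $F(x,y)\leq F(x',y)\leq F(x',y')$; conversely separate monotonicity is a special case of joint monotonicity. Hence it suffices to show that the two sections $y\mapsto F(x,y)$ and $x\mapsto F(x,y)$ are $\leq$-nondecreasing for all fixed arguments if and only if $\precsim$ is weakly single-peaked for $\leq$. I will establish sufficiency directly and necessity by contraposition. Throughout I use the one-way readings of \eqref{eq:kimura}: $F(x,y)=x$ implies $y\precsim x$ and $F(x,y)=y$ implies $x\precsim y$.

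For sufficiency, assume $\precsim$ is weakly single-peaked, fix $x$, and take $y\leq y'$ (the other section is symmetric). By quasitriviality $F(x,y)\in\{x,y\}$ and $F(x,y')\in\{x,y'\}$, and $F(x,y)\leq F(x,y')$ is immediate except in one of the two crossing configurations: (B) $F(x,y)=x$ and $F(x,y')=y'$, or (C) $F(x,y)=y$ and $F(x,y')=x$. In case (B), \eqref{eq:kimura} gives $y\precsim x$ and $x\precsim y'$; assuming for contradiction that $y'<x$ produces three $\leq$-distinct elements $y<y'<x$ (all coincidences being ruled out by the subcase hypotheses), so weak single-peakedness applies. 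Its disjuncts $y'\prec y$ and $y'\prec x$ are excluded by $y\precsim x\precsim y'$, leaving $y\sim y'\sim x$; but then $F(x,y)=x$ forces $F|_{[x]_\sim\times[x]_\sim}=\pi_1$ while $F(x,y')=y'$ forces $F|_{[x]_\sim\times[x]_\sim}=\pi_2$, contradicting the fact from Theorem~\ref{thm:kimura} that $F$ restricts to a single projection on each $\sim$-class. Case (C) is handled symmetrically using $x\precsim y$ and $y'\precsim x$.

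For necessity I argue by contraposition. If $\precsim$ is not weakly single-peaked, there are $a<b<c$ with $\neg(b\prec a)$, $\neg(b\prec c)$, and $\neg(a\sim b\sim c)$; by totality these become $a\precsim b$, $c\precsim b$, and ($a\prec b$ or $c\prec b$). I split into the cases $a\prec b$ with $c\prec b$; $a\prec b$ with $b\sim c$; and $a\sim b$ with $c\prec b$ (the remaining combination $a\sim b$ and $b\sim c$ being excluded since it would give $a\sim b\sim c$). In each case, together with the direction of the projection on the relevant $\sim$-class, I exhibit an explicit pair of comparable points witnessing a failure of monotonicity. For instance, if $a\prec b$ and $c\prec b$ then $F(a,b)=b$ while $F(a,c)\in\{a,c\}$, and comparing $(a,b)\leq(a,c)$ (when $F(a,c)=a$) or $(a,c)\leq(b,c)$ (when $F(a,c)=c$) yields $F$-values that strictly decrease along an increasing pair; the two mixed cases are treated likewise with pairs such as $(c,a)\leq(c,b)$ and $(b,a)\leq(c,a)$, choosing the side according to whether the class projection is $\pi_1$ or $\pi_2$.

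The only genuine obstacle is the bookkeeping in the degenerate configuration where the three relevant elements are pairwise $\sim$-equivalent: there the monotonicity constraint cannot be decided from the $\precsim$-relations alone, and one must invoke the global dichotomy of Theorem~\ref{thm:kimura}, namely that $F|_{A\times A}$ equals $\pi_1$ or $\pi_2$ for each class $A$. Care is also required to discard all coincidences among $x,y,y'$ (respectively $a,b,c$) before applying weak single-peakedness, since that property is stated only for triples of $\leq$-distinct elements.
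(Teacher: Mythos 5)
Your proof is correct and follows essentially the same route as the paper's: both reduce $\leq$-preservation to monotonicity in each argument separately, run the same case analysis on the $\precsim$-relations among the three relevant elements, and invoke the projection dichotomy of Theorem~\ref{thm:kimura} in the all-$\sim$-equivalent configuration. The only differences are organizational (you argue sufficiency forward from the crossing configurations and necessity via explicit decreasing pairs, whereas the paper argues both directions by contradiction and, for necessity, uses the sandwich $b=F(a,b)\leq F(a,c)\leq F(b,c)=b$ to contradict quasitriviality).
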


\begin{proof}
(Necessity) We proceed by contradiction. Suppose that there exist $a,b,c\in X$ satisfying $a<b<c$ such that $a\precsim b$ and $c\precsim b$ and $\neg(a\sim b\sim c)$. Suppose that $a\prec b$ and $c\sim b$. The other two cases can be dealt with similarly.
\begin{itemize}
\item If $F|_{[b]_{\sim}^2}=\pi_1|_{[b]_{\sim}^2}$, then by $\leq$-preservation of $F$ we have
$
b=F(a,b)\leq F(a,c)\leq F(b,c)=b.
$
\item If $F|_{[b]_{\sim}^2}=\pi_2|_{[b]_{\sim}^2}$, then by $\leq$-preservation of $F$ we have
$
b=F(b,a)\leq F(c,a)\leq F(c,b)=b.
$
\end{itemize}
In the first (resp.\ second) case we obtain $F(a,c)=b$ (resp.\ $F(c,a)=b$), which contradicts quasitriviality.

(Sufficiency) We proceed by contradiction. Suppose that $\precsim$ is weakly single-peaked for $\leq$ and that $F$ is not $\leq$-preserving. Then, for instance there exist $x,y,z\in X$ such that
\begin{equation}\label{ndecP}
y<z\quad\text{and}\quad F(x,y)>F(x,z).
\end{equation}

Using \eqref{ndecP} it is easy to see by contradiction that we necessarily have
$$
(x\precsim y\hspace{1ex}\text{or}\hspace{1ex} x\precsim z)\quad\text{and}\quad (y\precsim x\hspace{1ex}\text{or}\hspace{1ex} z\precsim x).
$$
We then have only the following three mutually exclusive cases to consider.
\begin{itemize}
\item If $y\prec x\prec z$ or $y\sim x\prec z$ or $y\prec x\sim z$, then by \eqref{ndecP} we obtain $y<z<x$, which violates weak single-peakedness.
\item If $z\prec x\prec y$ or $z\sim x\prec y$ or $z\prec x\sim y$, then by \eqref{ndecP} we obtain $x<y<z$, which violates weak single-peakedness.
\item If $x\sim y\sim z$, then we must have $F|_{[x]_{\sim}^2}=\pi_1|_{[x]_{\sim}^2}$ or $F|_{[x]_{\sim}^2}=\pi_2|_{[x]_{\sim}^2}$, which immediately violates \eqref{ndecP}.
\end{itemize}
This completes the proof of Proposition~\ref{prop:Qwspl}.
\end{proof}

From Proposition~\ref{prop:Qwspl} we immediately derive the following characterization of the class of associative, quasitrivial, and order-preserving operations $F\colon X^2\to X$, thus generalizing to the noncommutative case the equivalence between assertions (i) and (ii) of Theorem~\ref{thm:cchar1}. We observe that, when $X=X_n$ for some integer $n\geq 1$, an alternative characterisation of this class has been recently presented in \cite{Kis}.

\begin{theorem}\label{thm:QtAsNdKi}
Let $\leq$ be a total ordering on $X$. An $F\colon X^2\to X$ is associative, quasitrivial, and $\leq$-preserving if and only if it is of the form \eqref{eq:kimura} for some weak ordering $\precsim$ on $X$ that is weakly single-peaked for $\leq$.
\end{theorem}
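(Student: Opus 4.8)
The plan is to obtain the result as a direct combination of Theorem~\ref{thm:kimura} and Proposition~\ref{prop:Qwspl}, since the statement merely conjoins the characterization of associativity and quasitriviality---namely, the existence of a weak ordering $\precsim$ making $F$ of the form \eqref{eq:kimura}---with the additional requirement of $\leq$-preservation, which Proposition~\ref{prop:Qwspl} encodes precisely as weak single-peakedness of $\precsim$ for $\leq$. No new computation is needed; the work is entirely done by the two earlier results, and the task reduces to composing them correctly in each direction.

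For the necessity direction, I would start from an $F$ that is associative, quasitrivial, and $\leq$-preserving. Applying Theorem~\ref{thm:kimura} to the associative and quasitrivial operation $F$ produces a weak ordering $\precsim$ on $X$ such that $F$ is of the form \eqref{eq:kimura}. Keeping this particular $\precsim$ fixed, I would then invoke Proposition~\ref{prop:Qwspl}: since $F$ is of the form \eqref{eq:kimura} and is assumed $\leq$-preserving, the proposition forces $\precsim$ to be weakly single-peaked for $\leq$. This is exactly the representation claimed in the theorem.

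For the sufficiency direction, I would assume that $F$ is of the form \eqref{eq:kimura} for some weak ordering $\precsim$ that is weakly single-peaked for $\leq$. The sufficiency part of Theorem~\ref{thm:kimura} then immediately yields that $F$ is associative and quasitrivial, while Proposition~\ref{prop:Qwspl}, applied in the converse direction, yields that $F$ is $\leq$-preserving. Hence $F$ satisfies all three prescribed properties, completing the equivalence.

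I do not anticipate any genuine obstacle, as both auxiliary results carry the full burden; the only subtlety worth checking is that the weak ordering supplied by Theorem~\ref{thm:kimura} is exactly the kind of object to which Proposition~\ref{prop:Qwspl} applies. Since Proposition~\ref{prop:Qwspl} is stated for an \emph{arbitrary} weak ordering $\precsim$ rendering $F$ of the form \eqref{eq:kimura}, and the $\precsim$ furnished by Theorem~\ref{thm:kimura} is of precisely this type (indeed it is uniquely determined via \eqref{eq:prekimura}), the two statements compose without any friction, and the theorem follows at once.
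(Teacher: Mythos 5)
Your proposal is correct and matches the paper exactly: the paper presents this theorem as an immediate consequence of Proposition~\ref{prop:Qwspl} (together with Theorem~\ref{thm:kimura}), which is precisely the two-directional composition you describe. No further comment is needed.
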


We now consider the problem of enumerating associative and quasitrivial operations $F\colon X_n^2\to X_n$ that are $\leq_n$-preserving. We will make use of the following two auxiliary lemmas.

\begin{lemma}\label{lemma:condSt}
Let $\leq$ be a total ordering on $X$ and let $\precsim$ be a weak ordering on $X$. If $\precsim$ is weakly single-peaked for $\leq$, then there are no pairwise distinct $a,b,c,d\in X$ such that $a\prec b\sim c\sim d$.
\end{lemma}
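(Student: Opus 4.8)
The plan is to argue by contradiction. Suppose, contrary to the claim, that there exist pairwise distinct $a,b,c,d\in X$ with $a\prec b\sim c\sim d$. The first step is to upgrade the single relation $a\prec b$ to the statement that $a$ lies strictly $\prec$-below each of the three mutually equivalent elements. Indeed, from $a\precsim b$ and $b\precsim c$ transitivity gives $a\precsim c$, and if one had $c\precsim a$, then $b\precsim c\precsim a$ would force $b\precsim a$, contradicting $a\prec b$; hence $a\prec c$, and symmetrically $a\prec d$. Thus $a\prec b$, $a\prec c$, and $a\prec d$, while $b,c,d$ all lie in a single $\sim$-class that does not contain $a$.

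The key idea is a pigeonhole observation that lets me avoid a cumbersome case split according to the $\leq$-position of $a$ among $b,c,d$. Since $a$ is distinct from each of $b,c,d$ and $\leq$ is total, each of these three elements is either $\leq$-below $a$ or $\leq$-above $a$; as there are three of them, at least two lie on the same side of $a$. Call these two $u$ and $v$, labelled so that $u<v$. Then either $a<u<v$ or $u<v<a$, and in both configurations the $\leq$-median of the relevant triple is one of the two equivalent elements $u,v$.

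In the first case the median is $u$, and weak single-peakedness applied to $a<u<v$ would require $u\prec a$ or $u\prec v$ or $a\sim u\sim v$; but $u\prec a$ fails because $a\prec u$, $u\prec v$ fails because $u\sim v$, and $a\sim u\sim v$ fails because $a\prec u$. In the second case the median is $v$, and the property applied to $u<v<a$ would require $v\prec u$ or $v\prec a$ or $u\sim v\sim a$, each of which fails for the analogous reasons. Either way we contradict the weak single-peakedness of $\precsim$ for $\leq$ (Definition~\ref{de:wBlack}), which establishes the lemma.

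As for difficulty, there is no genuine obstacle: the only point requiring a little care is the preliminary deduction of $a\prec c$ and $a\prec d$ from $a\prec b$ together with $b\sim c\sim d$, and the mild cleverness is the pigeonhole remark, which collapses the four positional subcases for $a$ into the two symmetric cases treated above.
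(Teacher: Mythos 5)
Your proof is correct and follows essentially the same route as the paper's: argue by contradiction, note that $a$ is strictly $\prec$-below all of $b,c,d$, and exhibit a triple ($a$ together with two of the $\sim$-equivalent elements lying on the same side of $a$ for $\leq$) that violates Definition~\ref{de:wBlack}. The only difference is organizational: the paper fixes $b<c<d$ without loss of generality and splits into four positional cases for $a$, whereas your pigeonhole remark reaches the same two symmetric configurations directly.
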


\begin{proof}
We proceed by contradiction. Suppose that there exist pairwise distinct $a,b,c,d\in X$ such that $a\prec b\sim c\sim d$. Assume without loss of generality that $b<c<d$. If $b<a<c$, then the set $\{a,c,d\}$ violates weak single-peakedness of $\precsim$. In the three other cases the set $\{a,b,c\}$ violates weak single-peakedness of $\precsim$.
\end{proof}

\begin{lemma}\label{lemma:maxEl}
Let $\leq$ be a total ordering on $X$ and let $\precsim$ be a weak ordering on $X$ that is weakly single-peaked for $\leq$. Assume that both $\min_{\leq}X$ and $\max_{\leq}X$ are nonempty and let $a=\min_{\leq}X$ and $b=\max_{\leq}X$. If $\max_{\precsim}X\neq X$, then $\max_{\precsim}X\subseteq\{a,b\}$.
\end{lemma}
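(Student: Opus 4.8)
The plan is to argue by contradiction, extracting a forbidden configuration of four elements that Lemma~\ref{lemma:condSt} prohibits. Concretely, I take an arbitrary element $m\in\max_{\precsim}X$ and assume $m\notin\{a,b\}$, with the goal of producing pairwise distinct elements $w,a,m,b$ satisfying $w\prec a\sim m\sim b$.

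First I would pin down the $\leq$-position of $m$. Since $a=\min_{\leq}X$ and $b=\max_{\leq}X$ are the (unique) $\leq$-extreme elements and $m$ differs from both, totality of $\leq$ gives $a<m<b$. Applying weak single-peakedness to this triple yields $m\prec a$ or $m\prec b$ or $a\sim m\sim b$. However, $m\in\max_{\precsim}X$ means $x\precsim m$ for all $x\in X$, so in particular $a\precsim m$ and $b\precsim m$; this rules out $m\prec a$ and $m\prec b$. Hence the only surviving alternative is $a\sim m\sim b$.

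Next I would exploit that $\max_{\precsim}X$ is exactly the top $\sim$-class: from $a\sim m\sim b$ and $m\in\max_{\precsim}X$ I get $a,b\in\max_{\precsim}X$ as well. Here I note $a\neq b$, since $a=b$ would force $X$ to be a $\leq$-singleton, whence $\max_{\precsim}X=X$, contradicting the hypothesis $\max_{\precsim}X\neq X$. So $a,m,b$ are three pairwise distinct elements of the top class. Now, because $\max_{\precsim}X\neq X$, there is some $w\in X\setminus\max_{\precsim}X$; as the top class is $\precsim$-above every element and $\precsim$ is total, this gives $w\prec a$. The element $w$ is distinct from $a,m,b$ (they lie in the top class, $w$ does not), so $w,a,m,b$ are pairwise distinct and satisfy $w\prec a\sim m\sim b$, contradicting Lemma~\ref{lemma:condSt}. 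This contradiction shows $m\in\{a,b\}$, and since $m$ was an arbitrary element of $\max_{\precsim}X$, we conclude $\max_{\precsim}X\subseteq\{a,b\}$.

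The argument is short and the only delicate points are bookkeeping rather than substance: justifying the strict relation $w\prec a$ (not merely $w\precsim a$) from $w\notin\max_{\precsim}X$, which relies on $\max_{\precsim}X$ being precisely the top $\sim$-class together with totality of $\precsim$; and verifying all four distinctness conditions before invoking Lemma~\ref{lemma:condSt}. The one place where the hypotheses are genuinely used is in guaranteeing that $a$ and $b$ exist (both $\min_{\leq}X$ and $\max_{\leq}X$ nonempty), so that weak single-peakedness can be applied to the triple $a<m<b$, and that $\max_{\precsim}X\neq X$ supplies the witness $w$ strictly below the top class.
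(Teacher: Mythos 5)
Your proof is correct and takes essentially the same route as the paper's: both apply weak single-peakedness to the triple $a<m<b$ to force $a\sim m\sim b$ and then use Lemma~\ref{lemma:condSt} to reach a contradiction (the paper does so by first bounding $|\max_{\precsim}X|\leq 2$, you by directly exhibiting the forbidden configuration $w\prec a\sim m\sim b$). The additional bookkeeping you supply --- the strictness of $w\prec a$ and the pairwise distinctness checks --- is sound.
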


\begin{proof}
By Lemma~\ref{lemma:condSt} the set $\max_{\precsim}X$ contains at most two elements. Now suppose that there exists $x\in (\max_{\precsim}X)\setminus\{a,b\}$. Then the set $\{a,x,b\}$ violates weak single-peakedness of $\precsim$.
\end{proof}

Assume that $X_n$ is endowed with $\leq_n$. For any integer $n\geq 0$, we denote by $u(n)$ the number of weak orderings $\precsim$ on $X_n$ that are weakly single-peaked for $\leq_n$. Also, we denote by
\begin{itemize}
\item $u_e(n)$ the number of weak orderings $\precsim$ on $X_n$ that are weakly single-peaked for $\leq_n$ and for which $X_n$ has exactly one minimal element for $\precsim$,
\item $u_a(n)$ the number of weak orderings $\precsim$ on $X_n$ that are weakly single-peaked for $\leq_n$ and for which $X_n$ has exactly one maximal element for $\precsim$,
\item $u_{ea}(n)$ the number of weak orderings $\precsim$ on $X_n$ that are weakly single-peaked for $\leq_n$ and for which $X_n$ has exactly one minimal element and exactly one maximal element for $\precsim$, the two elements being distinct.
\end{itemize}
As a convention, we set $u(0)=u_e(0)=u_a(0)=u_{ea}(0)=0$. Propositions~\ref{prop:vn} and \ref{prop:ven} below provide explicit formulas for these sequences. The first few values of these sequences are shown in Table~\ref{tab:u}.\footnote{Note that the sequences A048739 and A163271 are shifted versions of $(u(n))_{n\geq 0}$ and $(u_{ea}(n))_{n\geq 0}$, respectively. More precisely, we have $u(n)=\mathrm{A048739}(n-1)$ and $u_{ea}(n)=\mathrm{A163271}(n-1)$ for every integer $n\geq 1$.} It turns out that the sequence $(u_e(n))_{n\geq 0}$ consists of the so-called \emph{Pell numbers} (Sloane's A000129).

\begin{proposition}\label{prop:vn}
The sequence $(u(n))_{n\geq 0}$ satisfies the second order linear recurrence equation
$$
u(n+2)-2{\,}u(n+1)-u(n) ~=~ 1{\,},\qquad n\geq 0,
$$
with $u(0)=0$ and $u(1)=1$, and we have
\begin{eqnarray*}
2{\,}u(n)+1 &=& \textstyle{\frac{1}{2}(1+\sqrt{2})^{n+1}+\frac{1}{2}(1-\sqrt{2})^{n+1}}\\
&=& \textstyle{\sum_{k\geq 0}{n+1\choose 2k}{\,}2^k}{\,},\qquad n\geq 0.
\end{eqnarray*}
Moreover, its GF is given by $U(z)=z/(z^3+z^2-3z+1)$.
\end{proposition}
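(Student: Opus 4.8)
The plan is to prove the recurrence combinatorially by classifying each weakly single-peaked weak ordering according to its top $\precsim$-class, and then to solve the recurrence by elementary means, reading off the closed forms and the generating function at the end. First I would dispose of the base cases directly: since a set with at most two elements contains no triple $a<b<c$, Definition~\ref{de:wBlack} is satisfied vacuously, so every weak ordering on $X_1$ and $X_2$ is weakly single-peaked; hence $u(1)=p(1)=1$ and $u(2)=p(2)=3$, while $u(0)=0$ by convention. For $n\geq 3$ I would classify the weakly single-peaked weak orderings $\precsim$ on $X_n$ by their top class $M=\max_{\precsim}X_n$, which is a nonempty equivalence class since $X_n$ is finite. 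Applying Lemma~\ref{lemma:maxEl} with $\min_{\leq_n}X_n=1$ and $\max_{\leq_n}X_n=n$ shows that either $M=X_n$ or $M\subseteq\{1,n\}$, so $M\in\{X_n,\{1\},\{n\},\{1,n\}\}$; for $n\geq 3$ these four options are pairwise distinct and $\{1,n\}$ is a proper subset of $X_n$.

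For each proper top class $M$ I would establish that the restriction map $\precsim\mapsto\precsim|_{X_n\setminus M}$ is a bijection from the weakly single-peaked weak orderings on $X_n$ with top class $M$ onto all weakly single-peaked weak orderings on $X_n\setminus M$. That restriction preserves weak single-peakedness is immediate, since the condition merely quantifies over triples; the content is the converse, namely that stacking $M$ on top of any weakly single-peaked weak ordering on $X_n\setminus M$ again produces a weakly single-peaked one. The key observation is that because $M\subseteq\{1,n\}$, no element of $M$ can occur as the middle term $b$ of a triple $a<b<c$, as $1$ is $\leq_n$-minimal and $n$ is $\leq_n$-maximal. Consequently every newly created triple has its middle term in $X_n\setminus M$, which lies strictly below $M$ for $\precsim$, so whichever of $a,c$ belongs to $M$ forces $b\prec a$ or $b\prec c$; triples contained in $X_n\setminus M$ are covered by the hypothesis. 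Since $\{2,\dots,n\}$, $\{1,\dots,n-1\}$, and $\{2,\dots,n-1\}$ are order-isomorphic to $X_{n-1}$, $X_{n-1}$, and $X_{n-2}$, and since $M=X_n$ contributes exactly the unique all-equivalent ordering, summing the four cases gives $u(n)=1+2u(n-1)+u(n-2)$ for $n\geq 3$. Combined with the direct identity $u(2)=3=1+2u(1)+u(0)$, this is precisely $u(n+2)-2u(n+1)-u(n)=1$ for all $n\geq 0$.

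It then remains to solve the recurrence, which is routine. The characteristic polynomial $x^2-2x-1$ has roots $1\pm\sqrt2$, and $-\frac12$ is a particular solution of the inhomogeneous equation, so $2u(n)+1=\alpha(1+\sqrt2)^n+\beta(1-\sqrt2)^n$; imposing $u(0)=0$ and $u(1)=1$ fixes $\alpha,\beta$ and yields $2u(n)+1=\frac12(1+\sqrt2)^{n+1}+\frac12(1-\sqrt2)^{n+1}$. Expanding both powers by the binomial theorem, the terms carrying an odd power of $\sqrt2$ cancel and the even ones contribute $\sum_{k\geq 0}\binom{n+1}{2k}2^k$. For the generating function I would multiply the recurrence by $z^{n+2}$, sum over $n\geq 0$, and use $u(0)=0$, $u(1)=1$, and $\sum_{n\geq 0}z^{n+2}=z^2/(1-z)$ to obtain $U(z)(1-2z-z^2)=z+z^2/(1-z)=z/(1-z)$, whence $U(z)=z/\big((1-z)(1-2z-z^2)\big)=z/(z^3+z^2-3z+1)$.

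The main obstacle is the combinatorial decomposition: verifying that re-attaching the top class $M$ preserves weak single-peakedness (which hinges on $M\subseteq\{1,n\}$, so that $M$ never supplies the middle term of a triple), and ensuring that the four cases for $M$ are disjoint and exhaustive only from $n\geq 3$ onwards. This last point is exactly why $u(1)$ and $u(2)$ must be computed separately, since for $n\leq 2$ the class $\{1,n\}$ collapses into $X_n$. Everything downstream of the recurrence is standard.
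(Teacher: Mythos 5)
Your proof is correct and follows essentially the same route as the paper: classify the weakly single-peaked weak orderings by their top class via Lemma~\ref{lemma:maxEl}, derive the recurrence $u(n)=1+2u(n-1)+u(n-2)$, and solve it. You merely supply details the paper leaves implicit, namely the converse direction of the restriction bijection (that re-attaching $M\subseteq\{1,n\}$ as a top class preserves weak single-peakedness because neither $1$ nor $n$ can be the middle term of a triple) and the degenerate case $n=2$ where $\{1,n\}$ coincides with $X_n$, which the paper absorbs by starting the recurrence at $n=2$ with $u(0)=0$.
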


\begin{proof}
We clearly have $u(0)=0$ and $u(1)=1$. So let us assume that $n\geq 2$. If $\precsim$ is a weak ordering on $X_n$ that is weakly single-peaked for $\leq_n$, then by Lemma~\ref{lemma:maxEl} either $\max_{\precsim}X_n=X_n$, or $\max_{\precsim}X_n=\{1\}$, or $\max_{\precsim}X_n=\{n\}$, or $\max_{\precsim}X_n=\{1,n\}$. In the three latter cases it is clear that the restriction of $\precsim$ to $X_n\setminus\max_{\precsim}X_n$ is weakly single-peaked for the restriction of $\leq_n$ to $X_n\setminus\max_{\precsim}X_n$. It follows that the number $u(n)$ of weakly single-peaked weak orderings on $X_n$ for $\leq_n$ satisfies the following second order linear equation
$$
u(n) ~=~ 1 + u(n-1)+u(n-1)+u(n-2),\qquad n\geq 2.
$$
The claimed expressions of $u(n)$ and GF of $(u(n))_{n\geq 0}$ follow straightforwardly.
\end{proof}

\begin{proposition}\label{prop:ven}
The sequence $(u_e(n))_{n\geq 0}$ satisfies the second order linear recurrence equation
$$
u_e(n+2)-2{\,}u_e(n+1)-u_e(n) ~=~ 0{\,},\qquad n\geq 0,
$$
with $u_e(0)=0$ and $u_e(1)=1$, and we have
\begin{eqnarray*}
u_e(n) &=& \textstyle{\frac{\sqrt{2}}{4}(1+\sqrt{2})^n-\frac{\sqrt{2}}{4}(1-\sqrt{2})^n}\\
&=& \textstyle{\sum_{k\geq 0}{n\choose 2k+1}{\,}2^k}{\,},\qquad n\geq 0.
\end{eqnarray*}
Moreover, its GF is given by $U_e(z)=-z/(z^2+2z-1)$. Furthermore, for any integer $n\geq 1$, we have $u_a(n)=2u(n-1)$, $u_{ea}(n)=2u_e(n-1)$, and $u_a(0)=u_{ea}(0)=0$.
\end{proposition}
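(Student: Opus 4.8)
The plan is to establish the recurrence first and then read off the two closed forms and the generating function, following closely the argument used for Proposition~\ref{prop:vn}. Fix $n\geq 2$ and let $\precsim$ be a weak ordering on $X_n$ that is weakly single-peaked for $\leq_n$ and has a unique minimal element. Since $X_n$ is finite, $\max_{\precsim}X_n$ is nonempty, so by Lemma~\ref{lemma:maxEl} exactly one of the following holds: $\max_{\precsim}X_n=X_n$, or $\max_{\precsim}X_n=\{1\}$, or $\max_{\precsim}X_n=\{n\}$, or $\max_{\precsim}X_n=\{1,n\}$. The key difference with Proposition~\ref{prop:vn} lies in the first case: if $\max_{\precsim}X_n=X_n$ then all elements are $\sim$-equivalent, so for $n\geq 2$ there is no unique minimal element and this case contributes nothing to $u_e(n)$, whereas it contributed the additive constant $1$ to $u(n)$. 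In each of the three remaining cases the top class is strictly above the rest, hence disjoint from the bottom class; therefore the restriction of $\precsim$ to $X_n\setminus\max_{\precsim}X_n$ is again weakly single-peaked (for the induced total order, which is order-isomorphic to $\leq_{n-1}$ or $\leq_{n-2}$) and still has exactly one minimal element, the minimal elements being unaffected by deletion of the top class. Conversely each such restriction extends uniquely. This yields $u_e(n)=u_e(n-1)+u_e(n-1)+u_e(n-2)$, that is, the homogeneous recurrence $u_e(n+2)-2\,u_e(n+1)-u_e(n)=0$, with $u_e(0)=0$ and $u_e(1)=1$ checked directly.

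From the recurrence the remaining assertions are routine. The characteristic equation $x^2-2x-1=0$ has roots $1\pm\sqrt 2$, and solving $u_e(n)=A(1+\sqrt 2)^n+B(1-\sqrt 2)^n$ with the two initial conditions gives $A=-B=\tfrac{\sqrt 2}{4}$, which is the first closed form. Expanding $(1\pm\sqrt 2)^n$ by the binomial theorem and subtracting makes the even powers cancel and leaves only the odd powers $j=2k+1$, for which $(\sqrt 2)^{2k+1}=2^k\sqrt 2$; after multiplying by $\tfrac{\sqrt 2}{4}$ this collapses to $\sum_{k\geq 0}\binom{n}{2k+1}2^k$, the second closed form. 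For the generating function I would multiply the recurrence by $z^{n+2}$, sum over $n\geq 0$, and account for the initial terms to obtain $(1-2z-z^2)\,U_e(z)=z$, i.e. $U_e(z)=-z/(z^2+2z-1)$.

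Finally, the relations for $u_a$ and $u_{ea}$ follow from the same deletion argument applied to the unique maximal element. If $\precsim$ has exactly one maximal element, then $\max_{\precsim}X_n$ is a singleton, which for $n\geq 2$ must be $\{1\}$ or $\{n\}$ by Lemma~\ref{lemma:maxEl}; deleting it leaves an \emph{arbitrary} weakly single-peaked weak ordering on the remaining $n-1$ elements, whence $u_a(n)=2\,u(n-1)$. Imposing in addition a unique minimal element forces that remaining ordering to have a unique minimal element as well (the deleted maximal element cannot be minimal for $n\geq 2$, so the two are automatically distinct), giving $u_{ea}(n)=2\,u_e(n-1)$; the small values at $n=0,1$ are handled directly.

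I expect the only genuine subtlety to be the bookkeeping in the recurrence step for $u_e$: one must verify that the ``all equivalent'' case really contributes $0$ (so that the recurrence is homogeneous, unlike the inhomogeneous one for $u$) and that deleting the top class preserves the ``exactly one minimal element'' condition. Everything after the recurrence is standard linear-recurrence and generating-function manipulation.
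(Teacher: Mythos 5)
Your proposal is correct and follows essentially the same route as the paper: the paper's proof is the one-line remark that one repeats the argument of Proposition~\ref{prop:vn} with the case $\max_{\precsim}X_n=X_n$ excluded (which is exactly your observation that the ``all equivalent'' case contributes $0$, making the recurrence homogeneous), and handles $u_a$ and $u_{ea}$ by the same deletion of the top class. You simply supply more of the routine detail (the closed forms and the generating function) than the paper bothers to write out.
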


\begin{proof}
The formula describing the sequence $(u_e(n))_{n\geq 0}$ is obtained by following the same steps as in the proof of Proposition~\ref{prop:vn}, except that in this case we always have $\max_{\precsim}X_n\neq X_n$. As for the sequence $(u_a(n))_{n\geq 0}$ we note that $\max_{\precsim}X_n$ must be either $\{1\}$ or $\{n\}$ and that the restriction of $\precsim$ to $X_n\setminus\max_{\precsim}X_n$ is weakly single-peaked for the restriction of $\leq_n$ to $X_n\setminus\max_{\precsim}X_n$. We proceed similarly for the sequence $(u_{ea}(n))_{n\geq 0}$.
\end{proof}

\begin{table}[htbp]
$$
\begin{array}{|c|rrrr|}
\hline n & u(n) & u_e(n) & u_a(n) & u_{ea}(n) \\
\hline 0 & 0 & 0 & 0 & 0 \\
1 & 1 & 1 & 0 & 0 \\
2 & 3 & 2 & 2 & 2 \\
3 & 8 & 5 & 6 & 4 \\
4 & 20 & 12 & 16 & 10 \\
5 & 49 & 29 & 40 & 24 \\
6 & 119 & 70 & 98 & 58 \\
\hline
\mathrm{OEIS}^{\mathstrut} & \mathrm{A048739} & \mathrm{A000129} & \mathrm{A293004} & \mathrm{A163271} \\
\hline
\end{array}
$$
\caption{First few values of $u(n)$, $u_e(n)$, $u_a(n)$, and $u_{ea}(n)$}
\label{tab:u}
\end{table}

\begin{example}
The $u(3)=8$ weak orderings on $X_3$ that are weakly single-peaked for $\leq_3$ are: $1\prec 2\prec 3$, $2\prec 1\prec 3$, $2\prec 3\prec 1$, $3\prec 2\prec 1$, $2\prec 1\sim 3$, $1\sim 2\prec 3$, $2\sim 3\prec 1$, and $1\sim 2\sim 3$. $u_e(3)=5$ of those have exactly one minimal element and $u_a(3)=6$ of those have exactly one maximal element. $u_{ea}(3)=4$ of those have exactly one minimal element and exactly one maximal element. These four weak orderings correspond to the $2^{3-1}=4$ total orderings on $X_3$ that are single-peaked for $\leq_3$.
\end{example}

Assume again that $X_n$ is endowed with $\leq_n$. For any integer $n\geq 0$, we denote by $v(n)$ the number of associative, quasitrivial, and $\leq_n$-preserving operations $F\colon X_n^2\to X_n$. Also, we denote by
\begin{itemize}
\item $v_e(n)$ the number of associative, quasitrivial, and $\leq_n$-preserving operations $F\colon X_n^2\to X_n$ that have neutral elements,
\item $v_a(n)$ the number of associative, quasitrivial, and $\leq_n$-preserving operations $F\colon X_n^2\to X_n$ that have annihilator elements,
\item $v_{ea}(n)$ the number of associative, quasitrivial, and $\leq_n$-preserving operations $F\colon X_n^2\to X_n$ that have distinct neutral and annihilator elements.
\end{itemize}
As a convention, we set $v(0)=v_e(0)=v_a(0)=v_{ea}(0)=0$. Propositions~\ref{prop:rn} and \ref{prop:ren} below provide explicit formulas for these sequences. The first few values of these sequences are shown in Table~\ref{tab:v}.

\begin{proposition}\label{prop:rn}
The sequence $(v(n))_{n\geq 0}$ satisfies the second order linear recurrence equation
$$
v(n+2)-2{\,}v(n+1)-2{\,}v(n) ~=~ 2{\,},\qquad n\geq 0,
$$
with $v(0)=0$ and $v(1)=1$, and we have
\begin{eqnarray*}
3{\,}v(n)+2 &=& \textstyle{\frac{2+\sqrt{3}}{2}(1+\sqrt{3})^n+\frac{2-\sqrt{3}}{2}(1-\sqrt{3})^n}\\
&=& \textstyle{\sum_{k\geq 0}3^k(2{n\choose 2k}+3{n\choose 2k+1})}{\,},\qquad n\geq 0.
\end{eqnarray*}
Moreover, its GF is given by $V(z)=z(z+1)/(2z^3-3z+1)$.
\end{proposition}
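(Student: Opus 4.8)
The plan is to establish the recurrence first, by counting the relevant operations through the characterization of Theorem~\ref{thm:QtAsNdKi}, and then to read off the closed forms and the generating function by routine manipulations. By Theorem~\ref{thm:QtAsNdKi} together with Proposition~\ref{prop:Qwspl}, counting the associative, quasitrivial, and $\leq_n$-preserving operations $F\colon X_n^2\to X_n$ amounts to counting pairs consisting of a weak ordering $\precsim$ on $X_n$ that is weakly single-peaked for $\leq_n$ and, for each $\sim$-class containing at least two elements, a choice between $\pi_1$ and $\pi_2$. Thus $v(n)=\sum_{\precsim}2^{c(\precsim)}$, where the sum ranges over the weakly single-peaked weak orderings on $X_n$ and $c(\precsim)$ is the number of $\sim$-classes with at least two elements.

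To obtain the recurrence I would mimic the proof of Proposition~\ref{prop:vn}, peeling off the top $\sim$-class $\max_{\precsim}X_n$. Since $\min_{\leq_n}X_n=1$ and $\max_{\leq_n}X_n=n$ are nonempty (and distinct for $n\geq 2$), Lemma~\ref{lemma:maxEl} forces $\max_{\precsim}X_n$ to be one of $X_n$, $\{1\}$, $\{n\}$, or $\{1,n\}$. I would then split the count according to this top class: (a) if $\max_{\precsim}X_n=X_n$ then $X_n$ is a single class of size $n\geq 2$, giving the two operations $\pi_1$ and $\pi_2$; (b)--(c) if $\max_{\precsim}X_n$ is $\{1\}$ or $\{n\}$, the top class is a singleton (no choice), and restricting $F$ to the remaining $n-1$ elements---a chain order-isomorphic to $X_{n-1}$---yields exactly $v(n-1)$ operations each; (d) if $\max_{\precsim}X_n=\{1,n\}$, the top class has size two (factor $2$) and restriction to $\{2,\dots,n-1\}\cong X_{n-2}$ gives $2\,v(n-2)$. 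In each of (b)--(d) the passage between $F$ and its restriction is a bijection, the extension being associative, quasitrivial, and $\leq_n$-preserving via Theorem~\ref{thm:QtAsNdKi} because a $\leq_n$-extremal and $\precsim$-maximal point never obstructs weak single-peakedness. Summing the four contributions yields $v(n)=2+2\,v(n-1)+2\,v(n-2)$ for $n\geq 2$, which is the stated recurrence after an index shift.

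The delicate point, which I expect to be the main obstacle, is the bookkeeping of the factors of $2$ together with the edge case $n=2$: there case (d) would set $\{1,n\}=\{1,2\}=X_2$, coinciding with case (a) and threatening a double count. This is resolved painlessly by the convention $v(0)=0$, which makes the term $2\,v(n-2)$ vanish exactly when case (d) degenerates, so the single formula remains valid for all $n\geq 2$; it then only remains to check the base values $v(0)=0$ and $v(1)=1$ directly. I would also note that it is Lemma~\ref{lemma:condSt} that ultimately underlies Lemma~\ref{lemma:maxEl} and hence guarantees the exhaustiveness of the four cases.

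Finally, with the recurrence and initial conditions in hand the remaining assertions are standard. The characteristic polynomial $x^2-2x-2$ has roots $1\pm\sqrt 3$, and a constant particular solution $-\tfrac23$ accounts for the inhomogeneity; writing $v(n)=A(1+\sqrt3)^n+B(1-\sqrt3)^n-\tfrac23$ and fitting $v(0)=0$, $v(1)=1$ gives $A=\tfrac{2+\sqrt3}{6}$ and $B=\tfrac{2-\sqrt3}{6}$, so that multiplying by $3$ yields the first displayed closed form for $3\,v(n)+2$. The binomial-sum form then follows by expanding $(1\pm\sqrt3)^n$ and separating even from odd powers of $\sqrt3$. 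For the generating function I would multiply the recurrence by $z^{n+2}$, sum over $n\geq 0$, and solve, obtaining $V(z)(1-2z-2z^2)=z+\tfrac{2z^2}{1-z}=\tfrac{z(1+z)}{1-z}$, whence $V(z)=z(z+1)/((1-z)(1-2z-2z^2))=z(z+1)/(2z^3-3z+1)$, as claimed.
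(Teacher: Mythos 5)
Your proposal is correct and follows essentially the same route as the paper: peel off $\max_{\precsim}X_n$ via Lemma~\ref{lemma:maxEl}, split into the four cases $X_n$, $\{1\}$, $\{n\}$, $\{1,n\}$ with the appropriate factors of $2$ for the projection choices, obtain $v(n)=2+2v(n-1)+2v(n-2)$, and solve by standard methods. Your explicit treatment of the degenerate $n=2$ case (where $\{1,n\}=X_2$) and of the restriction/extension bijection is a welcome detail that the paper leaves implicit, but it is not a different argument.
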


\begin{proof}
We clearly have $v(0)=0$ and $v(1)=1$. So let us assume that $n\geq 2$. If $F\colon X_n^2\to X_n$ is an associative, quasitrivial, and $\leq_n$-preserving operation, then by Theorem~\ref{thm:QtAsNdKi} it is of the form \eqref{eq:kimura} for some weak ordering $\precsim$ on $X_n$ that is weakly single-peaked for $\leq_n$. By Lemma~\ref{lemma:maxEl}, either $\max_{\precsim}X_n=X_n$ or $\max_{\precsim}X_n=\{1\}$ or $\max_{\precsim}X_n=\{n\}$ or $\max_{\precsim}X_n=\{1,n\}$. In the first case we have to consider the two projections $F=\pi_1$ and $F=\pi_2$. In the three latter cases it is clear that the restriction of $F$ to $(X_n\setminus\max_{\precsim}X_n)^2$ is associative, quasitrivial, and $\leq'_n$-preserving, where $\leq'_n$ is the restriction of $\leq_n$ to $X_n\setminus\max_{\precsim}X_n$. Also, in the latter case we have to consider the two projections $F|_{\{1,n\}^2}=\pi_1|_{\{1,n\}^2}$ and $F|_{\{1,n\}^2}=\pi_2|_{\{1,n\}^2}$. It follows that the number $v(n)$ of associative, quasitrivial, and $\leq_n$-preserving operations $F\colon X_n^2\to X_n$ satisfies the following second order linear equation
$$
v(n) ~=~ 2 + v(n-1)+v(n-1)+2v(n-2),\qquad n\geq 2.
$$
The claimed expressions of $v(n)$ and GF of $(v(n))_{n\geq 0}$ follow straightforwardly.
\end{proof}

\begin{proposition}\label{prop:ren}
The sequence $(v_e(n))_{n\geq 0}$ satisfies the second order linear recurrence equation
$$
v_e(n+2)-2{\,}v_e(n+1)-2v_e(n) ~=~ 0{\,},\qquad n\geq 0,
$$
with $v_e(0)=0$ and $v_e(1)=1$, and we have
$$
\textstyle{v_e(n) ~=~ \frac{\sqrt{3}}{6}(1+\sqrt{3})^n-\frac{\sqrt{3}}{6}(1-\sqrt{3})^n ~=~ \sum_{k\geq 0}{n\choose 2k+1}{\,}3^k{\,},\qquad n\geq 0.}
$$
Moreover, its GF is given by $V_e(z)=-z/(2z^2+2z-1)$. Furthermore, for any integer $n\geq 1$, we have $v_a(n)=2v(n-1)$, $v_{ea}(n)=2v_e(n-1)$, and $v_a(0)=v_{ea}(0)=0$.
\end{proposition}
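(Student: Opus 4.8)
The plan is to run the same ``peel off the top class'' argument used in the proof of Proposition~\ref{prop:rn}, but to keep track of the neutral element through Fact~\ref{fact:2}, which states that an operation of the form \eqref{eq:kimura} has a neutral element if and only if $(X_n,\precsim)$ has a unique minimal element. By Theorem~\ref{thm:QtAsNdKi} every associative, quasitrivial, and $\leq_n$-preserving operation $F$ arises from a weakly single-peaked weak ordering $\precsim$ (together with a choice of $\pi_1$ or $\pi_2$ on each nonsingleton class), so I would split according to Lemma~\ref{lemma:maxEl}: either $\max_{\precsim}X_n=X_n$, or $\max_{\precsim}X_n$ is one of $\{1\}$, $\{n\}$, $\{1,n\}$. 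The decisive point, exactly as in the passage from Proposition~\ref{prop:vn} to Proposition~\ref{prop:ven}, is that the case $\max_{\precsim}X_n=X_n$ makes $X_n$ a single class, which for $n\geq 2$ has no unique minimal element and hence no neutral element; this is precisely the configuration contributing the constant ``$2$'' in the recurrence for $v(n)$, and suppressing it is what separates $v_e$ from $v$. In the three remaining cases, removing the (maximal) top class leaves a weakly single-peaked weak ordering on the shorter chain, and since the deleted elements are maximal the minimal elements of $F$ coincide with those of its restriction; thus $F$ has a neutral element if and only if its restriction does.

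This gives, for $n\geq 2$, the recurrence $v_e(n)=v_e(n-1)+v_e(n-1)+2\,v_e(n-2)$, the two $v_e(n-1)$ terms coming from $\max_{\precsim}X_n=\{1\}$ and $\max_{\precsim}X_n=\{n\}$, and the $2\,v_e(n-2)$ term coming from $\max_{\precsim}X_n=\{1,n\}$, where the factor $2$ records the two choices $\pi_1,\pi_2$ on the top class. Together with $v_e(0)=0$ and $v_e(1)=1$ this is exactly $v_e(n+2)-2\,v_e(n+1)-2\,v_e(n)=0$. Everything else is then routine. The characteristic equation $x^2-2x-2=0$ has roots $1\pm\sqrt{3}$, and fitting the initial conditions yields $v_e(n)=\frac{\sqrt{3}}{6}(1+\sqrt{3})^n-\frac{\sqrt{3}}{6}(1-\sqrt{3})^n$; expanding $(1\pm\sqrt{3})^n$ by the binomial theorem and subtracting kills the even-index terms and leaves $v_e(n)=\sum_{k\geq 0}\binom{n}{2k+1}3^k$. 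Multiplying the recurrence by $z^n$ and summing gives $(1-2z-2z^2)\,V_e(z)=z$, that is, $V_e(z)=-z/(2z^2+2z-1)$.

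For the last assertions I would re-run the peeling argument while instead tracking the unique maximal element via Fact~\ref{fact:2}. A unique maximal element forces $\max_{\precsim}X_n$ to be a singleton, hence $\{1\}$ or $\{n\}$ by Lemma~\ref{lemma:maxEl}; deleting it leaves an arbitrary associative, quasitrivial, $\leq$-preserving operation on $n-1$ elements, so $v_a(n)=2\,v(n-1)$, and demanding in addition a unique minimal element distinct from the annihilator means the restriction must itself carry a neutral element, giving $v_{ea}(n)=2\,v_e(n-1)$. The step I expect to need the most care is the boundary bookkeeping at small $n$: when $n=2$ the class $\{1,n\}$ collapses to $X_n$, and when $n=1$ the two top-singleton cases $\{1\}$ and $\{n\}$ coincide, so the clean case split only produces the stated formulas for $n\geq 2$. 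The values $v_a(0)=v_{ea}(0)=0$, $v_e(0)=0$, and $v_a(1)=2\,v(0)=0$ must therefore be read off the conventions $v(0)=v_e(0)=0$ rather than from a naive count, exactly as in the corresponding $u$-sequences; once the recurrence $v_e(n)=2\,v_e(n-1)+2\,v_e(n-2)$ is in hand, the closed form, the binomial expression, and the generating function follow straightforwardly.
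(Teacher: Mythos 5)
Your proposal is correct and follows essentially the same route as the paper, which itself proves this proposition by rerunning the peeling argument of Proposition~\ref{prop:rn} with the case $\max_{\precsim}X_n=X_n$ suppressed (via Fact~\ref{fact:2} and Lemma~\ref{lemma:maxEl}) and by restricting the top class to a singleton for $v_a$ and $v_{ea}$. Your attention to the small-$n$ boundary cases, in particular reading $v_a(1)=2v(0)=0$ off the stated conventions rather than from a direct count, is consistent with how the paper's tables and recurrences are set up.
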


\begin{proof}
The formula describing the sequence $(v_e(n))_{n\geq 0}$ is obtained by following the same steps as in the proof of Proposition~\ref{prop:rn}, except that in this case we always have $\max_{\precsim}X_n\neq X_n$. As for the sequence $(v_a(n))_{n\geq 0}$ we note that $\max_{\precsim}X_n$ must be either $\{1\}$ or $\{n\}$ and that the restriction of $F$ to $(X_n\setminus\max_{\precsim}X_n)^2$ is associative, quasitrivial, and $\leq'_n$-preserving, where $\leq'_n$ is the restriction of $\leq_n$ to $X_n\setminus\max_{\precsim}X_n$. We proceed similarly for the sequence $(v_{ea}(n))_{n\geq 0}$.
\end{proof}

\begin{table}[htbp]
$$
\begin{array}{|c|rrrr|}
\hline n & v(n) & v_e(n) & v_a(n) & v_{ea}(n) \\
\hline 0 & 0 & 0 & 0 & 0 \\
1 & 1 & 1 & 0 & 0  \\
2 & 4 & 2 & 2 & 2 \\
3 & 12 & 6 & 8 & 4 \\
4 & 34 & 16 & 24 & 12 \\
5 & 94 & 44 & 68 & 32 \\
6 & 258 & 120 & 188 & 88 \\
\hline
\mathrm{OEIS}^{\mathstrut} & \mathrm{A293005} & \mathrm{A002605} & \mathrm{A293006} & \mathrm{A293007} \\
\hline
\end{array}
$$
\caption{First few values of $v(n)$, $v_e(n)$, $v_a(n)$, and $v_{ea}(n)$}
\label{tab:v}
\end{table}

\begin{example}
We show in Figure~\ref{fig:eos4} the $q(3)=20$ associative and quasitrivial operations on $X_3$. Among these operations, $q_e(3)=12$ have neutral elements and $v(3)=12$ are $\leq_3$-preserving.
\end{example}

\setlength{\unitlength}{3.5ex}
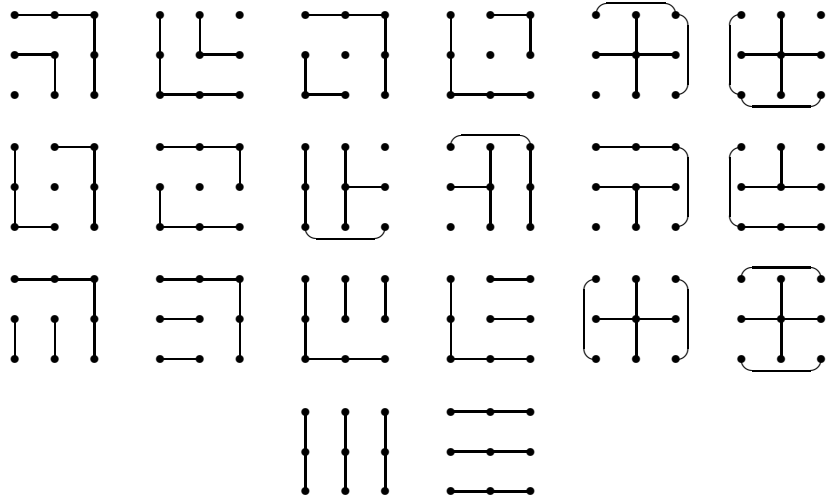
\begin{figure}[htbp]
\begin{center}
\begin{small}
\begin{picture}(3,3)
\multiput(0.5,0.5)(0,1){3}{\multiput(0,0)(1,0){3}{\circle*{0.2}}}
\drawline[1](0.5,2.5)(2.5,2.5)(2.5,0.5)\drawline[1](0.5,1.5)(1.5,1.5)(1.5,0.5)
\end{picture}
\hspace{0.01\textwidth}
\begin{picture}(3,3)
\multiput(0.5,0.5)(0,1){3}{\multiput(0,0)(1,0){3}{\circle*{0.2}}}
\drawline[1](0.5,2.5)(0.5,0.5)(2.5,0.5)\drawline[1](1.5,2.5)(1.5,1.5)(2.5,1.5)
\end{picture}
\hspace{0.01\textwidth}
\begin{picture}(3,3)
\multiput(0.5,0.5)(0,1){3}{\multiput(0,0)(1,0){3}{\circle*{0.2}}}
\drawline[1](0.5,2.5)(2.5,2.5)(2.5,0.5)\drawline[1](0.5,1.5)(0.5,0.5)(1.5,0.5)
\end{picture}
\hspace{0.01\textwidth}
\begin{picture}(3,3)
\multiput(0.5,0.5)(0,1){3}{\multiput(0,0)(1,0){3}{\circle*{0.2}}}
\drawline[1](0.5,2.5)(0.5,0.5)(2.5,0.5)\drawline[1](1.5,2.5)(2.5,2.5)(2.5,1.5)
\end{picture}
\hspace{0.01\textwidth}
\begin{picture}(3,3)
\multiput(0.5,0.5)(0,1){3}{\multiput(0,0)(1,0){3}{\circle*{0.2}}}
\drawline[1](0.5,1.5)(2.5,1.5)\drawline[1](1.5,0.5)(1.5,2.5)
\put(2.5,1.5){\oval(0.6,2)[r]}\put(1.5,2.5){\oval(2,0.6)[t]}
\end{picture}
\hspace{0.01\textwidth}
\begin{picture}(3,3)
\multiput(0.5,0.5)(0,1){3}{\multiput(0,0)(1,0){3}{\circle*{0.2}}}
\drawline[1](0.5,1.5)(2.5,1.5)\drawline[1](1.5,0.5)(1.5,2.5)
\put(0.5,1.5){\oval(0.6,2)[l]}\put(1.5,0.5){\oval(2,0.6)[b]}
\end{picture}

\medskip

\begin{picture}(3,3)
\multiput(0.5,0.5)(0,1){3}{\multiput(0,0)(1,0){3}{\circle*{0.2}}}
\drawline[1](0.5,2.5)(0.5,0.5)(1.5,0.5)\drawline[1](1.5,2.5)(2.5,2.5)(2.5,0.5)
\end{picture}
\hspace{0.01\textwidth}
\begin{picture}(3,3)
\multiput(0.5,0.5)(0,1){3}{\multiput(0,0)(1,0){3}{\circle*{0.2}}}
\drawline[1](0.5,2.5)(2.5,2.5)(2.5,1.5)\drawline[1](0.5,1.5)(0.5,0.5)(2.5,0.5)
\end{picture}
\hspace{0.01\textwidth}
\begin{picture}(3,3)
\multiput(0.5,0.5)(0,1){3}{\multiput(0,0)(1,0){3}{\circle*{0.2}}}
\drawline[1](0.5,2.5)(0.5,0.5)\drawline[1](1.5,2.5)(1.5,0.5)\drawline[1](1.5,1.5)(2.5,1.5)
\put(1.5,0.5){\oval(2,0.6)[b]}
\end{picture}
\hspace{0.01\textwidth}
\begin{picture}(3,3)
\multiput(0.5,0.5)(0,1){3}{\multiput(0,0)(1,0){3}{\circle*{0.2}}}
\drawline[1](2.5,2.5)(2.5,0.5)\drawline[1](1.5,2.5)(1.5,0.5)\drawline[1](0.5,1.5)(1.5,1.5)
\put(1.5,2.5){\oval(2,0.6)[t]}
\end{picture}
\hspace{0.01\textwidth}
\begin{picture}(3,3)
\multiput(0.5,0.5)(0,1){3}{\multiput(0,0)(1,0){3}{\circle*{0.2}}}
\drawline[1](0.5,2.5)(2.5,2.5)\drawline[1](0.5,1.5)(2.5,1.5)\drawline[1](1.5,1.5)(1.5,0.5)
\put(2.5,1.5){\oval(0.6,2)[r]}
\end{picture}
\hspace{0.01\textwidth}
\begin{picture}(3,3)
\multiput(0.5,0.5)(0,1){3}{\multiput(0,0)(1,0){3}{\circle*{0.2}}}
\drawline[1](0.5,0.5)(2.5,0.5)\drawline[1](0.5,1.5)(2.5,1.5)\drawline[1](1.5,1.5)(1.5,2.5)
\put(0.5,1.5){\oval(0.6,2)[l]}
\end{picture}

\medskip

\begin{picture}(3,3)
\multiput(0.5,0.5)(0,1){3}{\multiput(0,0)(1,0){3}{\circle*{0.2}}}
\drawline[1](0.5,2.5)(2.5,2.5)(2.5,0.5)\drawline[1](0.5,1.5)(0.5,0.5)\drawline[1](1.5,1.5)(1.5,0.5)
\end{picture}
\hspace{0.01\textwidth}
\begin{picture}(3,3)
\multiput(0.5,0.5)(0,1){3}{\multiput(0,0)(1,0){3}{\circle*{0.2}}}
\drawline[1](0.5,2.5)(2.5,2.5)(2.5,0.5)\drawline[1](0.5,1.5)(1.5,1.5)\drawline[1](0.5,0.5)(1.5,0.5)
\end{picture}
\hspace{0.01\textwidth}
\begin{picture}(3,3)
\multiput(0.5,0.5)(0,1){3}{\multiput(0,0)(1,0){3}{\circle*{0.2}}}
\drawline[1](0.5,2.5)(0.5,0.5)(2.5,0.5)\drawline[1](1.5,2.5)(1.5,1.5)\drawline[1](2.5,2.5)(2.5,1.5)
\end{picture}
\hspace{0.01\textwidth}
\begin{picture}(3,3)
\multiput(0.5,0.5)(0,1){3}{\multiput(0,0)(1,0){3}{\circle*{0.2}}}
\drawline[1](0.5,2.5)(0.5,0.5)(2.5,0.5)\drawline[1](1.5,2.5)(2.5,2.5)\drawline[1](1.5,1.5)(2.5,1.5)
\end{picture}
\hspace{0.01\textwidth}
\begin{picture}(3,3)
\multiput(0.5,0.5)(0,1){3}{\multiput(0,0)(1,0){3}{\circle*{0.2}}}
\drawline[1](0.5,1.5)(2.5,1.5)\drawline[1](1.5,0.5)(1.5,2.5)
\put(2.5,1.5){\oval(0.6,2)[r]}\put(0.5,1.5){\oval(0.6,2)[l]}
\end{picture}
\hspace{0.01\textwidth}
\begin{picture}(3,3)
\multiput(0.5,0.5)(0,1){3}{\multiput(0,0)(1,0){3}{\circle*{0.2}}}
\drawline[1](0.5,1.5)(2.5,1.5)\drawline[1](1.5,0.5)(1.5,2.5)
\put(1.5,0.5){\oval(2,0.6)[b]}\put(1.5,2.5){\oval(2,0.6)[t]}
\end{picture}

\medskip

\begin{picture}(3,3)
\multiput(0.5,0.5)(0,1){3}{\multiput(0,0)(1,0){3}{\circle*{0.2}}}
\drawline[1](0.5,0.5)(0.5,2.5)\drawline[1](1.5,0.5)(1.5,2.5)\drawline[1](2.5,0.5)(2.5,2.5)
\end{picture}
\hspace{0.01\textwidth}
\begin{picture}(3,3)
\multiput(0.5,0.5)(0,1){3}{\multiput(0,0)(1,0){3}{\circle*{0.2}}}
\drawline[1](0.5,2.5)(2.5,2.5)\drawline[1](0.5,1.5)(2.5,1.5)\drawline[1](0.5,0.5)(2.5,0.5)
\end{picture}

\end{small}
\caption{The 20 associative and quasitrivial operations on $X_3$}
\label{fig:eos4}
\end{center}
\end{figure}

\begin{remark}
We observe that the explicit expressions of $v(n)$ and $v_e(n)$ as stated in Propositions~\ref{prop:rn} and \ref{prop:ren} were recently and independently obtained in \cite{Kis} by means of a totally different approach.
\end{remark}

%---------------------------------------------------------------------------------------
\section{Single-peakedness and weak single-peakedness}
\label{sec:5}

In this section we further analyze the single-peakedness and weak single-peaked{\-}ness properties. In particular, we show how these properties can be easily checked graphically.

Define the \emph{strict convex hull} of $x,y\in X$ for a total ordering $\leq$ on $X$ by $\cleq(x,y)=\{z\in X:x<z<y\}$, if $x<y$, and $\cleq(x,y)=\{z\in X:y<z<x\}$, if $y<x$. Using this concept we can rewrite the definitions of single-peakedness and weak single-peakedness in a more symmetric way.

Accordingly, a total ordering $\preceq$ on $X$ is single-peaked for a (reference) total ordering $\leq$ on $X$ if and only if for any $a,b,c\in X$ such that $b\in\cleq(a,c)$, we have $b\prec a$ or $b\prec c$ (see Definition~\ref{de:Black}). In other words, the condition says that from among three pairwise distinct elements of $X$, the centrist one for $\leq$ is never ranked last by $\preceq$.

A noteworthy characterization of single-peakedness is that for any total orderings $\leq$ and $\preceq$ on $X$, the operation $F=\max_{\preceq}$ is $\leq$-preserving if and only if $\preceq$ is single-peaked for $\leq$ (cf.\ Proposition~\ref{prop:Qspl}).

\begin{remark}\label{rem:dualsp}
It is natural to define the dual version of single-peakedness by saying that from among three pairwise distinct elements of $X$, the centrist one for $\leq$ is never ranked \emph{first} by $\preceq$. By doing so, it is clear that $\preceq$ is single-peaked for $\leq$ if and only if the inverse ordering ${\preceq^{-1}}$ (defined by ${a\preceq^{-1}}b\Leftrightarrow b\preceq a$) is dual single-peaked for $\leq$. For instance, we could replace $\max_{\preceq}$ with $\min_{\preceq}$ and ``single-peaked'' with ``dual single-peaked'' in Proposition~\ref{prop:Qspl}. Thus, considering the single-peakedness property or its dual version is simply a matter of convention.
\end{remark}

The following proposition provides an alternative characterization of single-peak{\-}edness. Recall first that, for any total ordering $\leq$ on $X$, a subset $C$ of $X$ is said to be \emph{convex for $\leq$} if for any $a,b,c\in X$ such that $b\in\cleq(a,c)$, we have that $a,c\in C$ implies $b\in C$.

\begin{proposition}[{see \cite[Proposition 3.10]{DevKisMar}}]\label{prop:spConv}
Let $\leq$ and $\preceq$ be total orderings on $X$. Then $\preceq$ is single-peaked for $\leq$ if and only if for every $t\in X$ the set $\{x\in X: x\preceq t\}$ is convex for $\leq$.
\end{proposition}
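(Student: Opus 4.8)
The plan is to prove both implications directly from the definition of single-peakedness (Definition~\ref{de:Black}), translating the ordering condition into the convexity condition for the lower level sets $L_t = \{x\in X: x\preceq t\}$. The key observation linking the two formulations is that single-peakedness forbids the centrist element (for $\leq$) of any triple from being ranked strictly last by $\preceq$, whereas convexity of $L_t$ forbids a centrist element from lying outside $L_t$ when both extremes lie inside it.

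For the forward implication, I would assume $\preceq$ is single-peaked for $\leq$ and fix $t\in X$; I must show $L_t$ is convex for $\leq$. So I take $a,b,c$ with $b\in\cleq(a,c)$ and $a,c\in L_t$, i.e.\ $a\preceq t$ and $c\preceq t$, and aim to conclude $b\in L_t$, i.e.\ $b\preceq t$. By single-peakedness applied to the triple $a,b,c$ (which are pairwise distinct since $b$ is strictly between $a$ and $c$ for $\leq$), we have $b\prec a$ or $b\prec c$. In either case $b\prec a\preceq t$ or $b\prec c\preceq t$, so by transitivity of $\preceq$ we get $b\prec t$, hence $b\preceq t$ and $b\in L_t$. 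This direction is short and uses only transitivity of $\preceq$.

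For the reverse implication, I would assume every $L_t$ is convex for $\leq$ and prove single-peakedness. Take $a,b,c$ pairwise distinct with $a<b<c$ (so $b\in\cleq(a,c)$); I must show $b\prec a$ or $b\prec c$. I argue by contradiction: suppose instead that $a\preceq b$ and $c\preceq b$. Setting $t=b$, both $a$ and $c$ lie in $L_b=\{x: x\preceq b\}$, and since $b\in\cleq(a,c)$, convexity of $L_b$ forces $b\in L_b$—which is automatically true and yields no contradiction by itself. Here lies the subtlety: the naive choice $t=b$ is vacuous, so I instead observe that negating the conclusion means $a\prec b$ is false \emph{or}, more precisely, that $\neg(b\prec a)$ and $\neg(b\prec c)$ give $a\preceq b$ and $c\preceq b$; to derive a contradiction I should pick $t$ to be whichever of $a,c$ is $\preceq$-smaller, say $t=a$ when $a\preceq c$. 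Then $a,c\in L_{?}$ requires care.

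The main obstacle, therefore, is choosing the right level $t$ in the reverse direction so that convexity bites. The clean fix is: negating single-peakedness at the triple $\{a,b,c\}$ means $b$ is the $\preceq$-largest of the three, so both $a\prec b$ and $c\prec b$ (using that the three are distinct and $\preceq$ is a total ordering, ties are impossible). Now let $t$ be the $\preceq$-maximum of $\{a,c\}$; then $a\preceq t$ and $c\preceq t$ with $t\in\{a,c\}$, so $a,c\in L_t$, while $t\prec b$ gives $b\notin L_t$. Since $b\in\cleq(a,c)$, this contradicts convexity of $L_t$. Thus single-peakedness holds, completing the equivalence. I expect the entire argument to be a page or less, with the only genuine care being this selection of $t$ and the use of antisymmetry/totality of $\preceq$ to rule out ties among the three distinct elements.
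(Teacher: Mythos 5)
Your proof is correct and follows essentially the same route the paper uses: the paper defers this proposition to \cite{DevKisMar} but proves the weak-ordering generalization at the end of Section~5 by exactly your argument, namely taking necessity directly from transitivity and, for sufficiency, choosing $t$ to be the $\preceq$-larger of $a$ and $c$ so that convexity of $L_t$ yields the contradiction. The only cosmetic difference is your detour through the vacuous choice $t=b$ before landing on the right level set, which you resolve correctly.
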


The single-peakedness property of a total ordering $\preceq$ on $X$ for some total ordering $\leq$ can often be easily checked (especially if $X$ is finite) by plotting a function, say $f_{\preceq}$, in a rectangular coordinate system in the following way. Represent the reference totally ordered set $(X,\leq)$ on the horizontal axis and the reversed version of the totally ordered set $(X,\preceq)$, that is $(X,{\preceq^{-1}})$, on the vertical axis. The function $f_{\preceq}$ is defined by its graph $\{(x,x):x\in X\}$.\footnote{When $X=X_n$ for some integer $n\geq 1$, the graphical representation of $f_{\preceq}$ is then obtained by joining the points $(1,1),\ldots,(n,n)$ by line segments.} We then see that the total ordering $\preceq$ is single-peaked for $\leq$ if and only if the graph of $f_{\preceq}$ is ``V-free'' in the sense that we cannot find three points $(i,i)$, $(j,j)$, $(k,k)$ in V-shape. Equivalently, $f_{\preceq}$ has only one local maximum.

\begin{example}\label{ex:2fl}
Figure~\ref{fig:spnsp} gives the functions $f_{\preceq}$ and $f_{\preceq'}$ corresponding to the total orderings $4\prec 3\prec 5\prec 2\prec 1\prec 6$ (from Remark~\ref{rem:3ab}(a)) and $6\prec' 5\prec' 2\prec' 1\prec' 3\prec' 4$, respectively, on $X_6$. We see that $\preceq$ is single-peaked for $\leq_6$ since $f_{\preceq}$ has only one local maximum while $\preceq'$ is not single-peaked for $\leq_6$ since $f_{\preceq'}$ has two local maxima (also, the points $(1,1)$, $(3,3)$, $(5,5)$ for instance are in V-shape).
\end{example}

\setlength{\unitlength}{3.5ex}
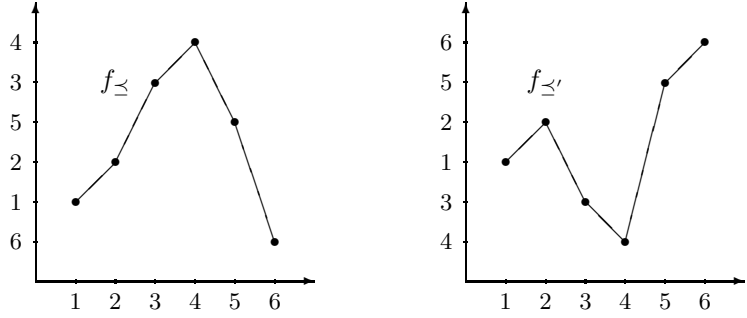
\begin{figure}[htbp]
\begin{center}
\begin{small}
\null\hspace{0.03\textwidth}
\begin{picture}(8,8)
\put(0.5,0.5){\vector(1,0){7}}\put(0.5,0.5){\vector(0,1){7}}
\multiput(1.5,0.45)(1,0){6}{\line(0,1){0.1}}%
\multiput(0.45,1.5)(0,1){6}{\line(1,0){0.1}}%
\put(1.5,0){\makebox(0,0){$1$}}\put(2.5,0){\makebox(0,0){$2$}}\put(3.5,0){\makebox(0,0){$3$}}
\put(4.5,0){\makebox(0,0){$4$}}\put(5.5,0){\makebox(0,0){$5$}}\put(6.5,0){\makebox(0,0){$6$}}
\put(0,1.5){\makebox(0,0){$6$}}\put(0,2.5){\makebox(0,0){$1$}}\put(0,3.5){\makebox(0,0){$2$}}
\put(0,4.5){\makebox(0,0){$5$}}\put(0,5.5){\makebox(0,0){$3$}}\put(0,6.5){\makebox(0,0){$4$}}
\drawline[1](1.5,2.5)(2.5,3.5)(3.5,5.5)(4.5,6.5)(5.5,4.5)(6.5,1.5)
\put(1.5,2.5){\circle*{0.2}}\put(2.5,3.5){\circle*{0.2}}\put(3.5,5.5){\circle*{0.2}}
\put(4.5,6.5){\circle*{0.2}}\put(5.5,4.5){\circle*{0.2}}\put(6.5,1.5){\circle*{0.2}}
\put(2.5,5.5){\makebox(0,0){\begin{normalsize}$f_{\preceq}$\end{normalsize}}}
\end{picture}
\hspace{0.1\textwidth}
\begin{picture}(8,8)
\put(0.5,0.5){\vector(1,0){7}}\put(0.5,0.5){\vector(0,1){7}}
\multiput(1.5,0.45)(1,0){6}{\line(0,1){0.1}}%
\multiput(0.45,1.5)(0,1){6}{\line(1,0){0.1}}%
\put(1.5,0){\makebox(0,0){$1$}}\put(2.5,0){\makebox(0,0){$2$}}\put(3.5,0){\makebox(0,0){$3$}}
\put(4.5,0){\makebox(0,0){$4$}}\put(5.5,0){\makebox(0,0){$5$}}\put(6.5,0){\makebox(0,0){$6$}}
\put(0,1.5){\makebox(0,0){$4$}}\put(0,2.5){\makebox(0,0){$3$}}\put(0,3.5){\makebox(0,0){$1$}}
\put(0,4.5){\makebox(0,0){$2$}}\put(0,5.5){\makebox(0,0){$5$}}\put(0,6.5){\makebox(0,0){$6$}}
\drawline[1](1.5,3.5)(2.5,4.5)(3.5,2.5)(4.5,1.5)(5.5,5.5)(6.5,6.5)
\put(1.5,3.5){\circle*{0.2}}\put(2.5,4.5){\circle*{0.2}}\put(3.5,2.5){\circle*{0.2}}
\put(4.5,1.5){\circle*{0.2}}\put(5.5,5.5){\circle*{0.2}}\put(6.5,6.5){\circle*{0.2}}
\put(2.5,5.5){\makebox(0,0){\begin{normalsize}$f_{\preceq'}$\end{normalsize}}}
\end{picture}
\end{small}
\caption{$\preceq$ is single-peaked (left) while $\preceq'$ is not (right)}
\label{fig:spnsp}
\end{center}
\end{figure}

It is known (see, e.g., \cite{BerPer06}) that there are exactly $2^{n-1}$ single-peaked total orderings on $X_n$ for $\leq_n$. The proof is a simplified version of that of Proposition~\ref{prop:vn} (just observe that either $\max_{\preceq}X_n=\{1\}$ or $\max_{\preceq}X_n=\{n\}$).

Let us now focus on weak single-peakedness. Recall (cf.\ Definition~\ref{de:wBlack}) that a weak ordering $\precsim$ on $X$ is weakly single-peaked for a reference total ordering $\leq$ on $X$ if for any $a,b,c\in X$ such that $b\in\cleq(a,c)$, we have $b\prec a$ or $b\prec c$ or $a\sim b\sim c$.

In Proposition~\ref{prop:Qwspl} we saw that for any total ordering $\leq$ and weak ordering $\precsim$ on $X$, any operation $F\colon X^2\to X$ of the form \eqref{eq:kimura} is $\leq$-preserving if and only if $\precsim$ is weakly single-peaked for $\leq$. This characterization justifies the definition of weak single-peakedness and shows in particular that the condition $a\sim b\sim c$ is necessary in the definition. It is also noteworthy that the following equivalence holds
$$
\text{$b\prec a~$ or $~b\prec c~$ or $~a\sim b\sim c$}\quad\Leftrightarrow\quad
\begin{cases}
a\prec b~\Rightarrow ~ b\prec c,\\
c\prec b~\Rightarrow ~ b\prec a.
\end{cases}
$$

We also have the following alternative characterization of weak single-peakedness. We omit the proof for it is straightforward (by contradiction).

\begin{proposition}\label{prop:52z}
Let $\leq$ be a total ordering on $X$ and let $\precsim$ be a weak ordering on $X$. Then $\precsim$ is weakly single-peaked for $\leq$ if and only if the following conditions hold.
\begin{enumerate}
\item[(a)] For any $a,b,c\in X$ such that $b\in\mathrm{conv}_{\leq}(a,c)$, we have $b\precsim a$ or $b\precsim c$.
\item[(b)] For any $a,b,c\in X$ such that $a\neq c$ and $b\prec a\sim c$, we have $b\in\mathrm{conv}_{\leq}(a,c)$.
\end{enumerate}
\end{proposition}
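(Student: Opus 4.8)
The plan is to prove the two implications separately, both by contradiction, and to exploit throughout that $\cleq(x,y)$ is symmetric in its arguments (it depends only on the $\leq$-interval spanned by $x$ and $y$), so that in the hypotheses of (a) and (b) the two ``endpoints'' may be interchanged freely.

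First I would establish that weak single-peakedness implies (a) and (b). Condition (a) is immediate from Definition~\ref{de:wBlack}: if $b\in\cleq(a,c)$, then $b\prec a$, $b\prec c$, or $a\sim b\sim c$, and each alternative yields $b\precsim a$ or $b\precsim c$. For (b) I would argue by contradiction, assuming $a\neq c$ and $b\prec a\sim c$ but $b\notin\cleq(a,c)$. Since $b\prec a\sim c$ forces $a,b,c$ pairwise distinct, and taking $a<c$ without loss of generality, the only possibilities left by $b\notin\cleq(a,c)$ are $b<a<c$ and $a<c<b$. In the first, the $\leq$-centrist of the triple is $a$, and weak single-peakedness applied to $\{a,b,c\}$ is contradicted by $b\prec a$ together with $a\sim c$; in the second, the centrist is $c$, and the contradiction comes from $b\prec c$ (which follows from $b\prec a\sim c$) together with $a\sim c$. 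This yields $b\in\cleq(a,c)$.

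For the converse I would assume (a) and (b), take $a<b<c$ (so that $b\in\cleq(a,c)$), and suppose the conclusion of weak single-peakedness fails, i.e.\ $a\precsim b$, $c\precsim b$, and $\neg(a\sim b\sim c)$. Applying (a) to $b\in\cleq(a,c)$ gives $b\precsim a$ or $b\precsim c$, hence $a\sim b$ or $b\sim c$. If $a\sim b$, then $\neg(a\sim b\sim c)$ forces $c\not\sim b$, so $c\prec b$, that is $c\prec a\sim b$ with $a\neq b$; condition (b) then places $c$ strictly between $a$ and $b$ for $\leq$, contradicting $a<b<c$. The case $b\sim c$ is symmetric: one obtains $a\prec b\sim c$ with $b\neq c$, and (b) places $a$ in $\cleq(b,c)$, again contradicting $a<b<c$.

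The delicate step is the converse direction: one must first combine (a) with the negated conclusion to \emph{manufacture} one of the equivalences $a\sim b$ or $b\sim c$, since only then does the hypothesis $y\prec x\sim z$ of condition (b) become available. Getting the orientation of the arguments of $\cleq$ right at that point---ensuring that the element forced to lie strictly between two others in fact falls outside the relevant interval---is the only thing requiring care; beyond this the argument is pure case analysis with no computation.
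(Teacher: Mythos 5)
Your proof is correct, and since the paper omits the argument entirely (noting only that it is ``straightforward (by contradiction)''), your contradiction-based case analysis is precisely the intended argument. In particular, the key step in the converse---using condition (a) together with the negated conclusion to produce $a\sim b$ or $b\sim c$ before invoking condition (b)---is handled correctly, as is the transitivity deduction $b\prec a\sim c\Rightarrow b\prec c$ in the forward direction.
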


Weak single-peakedness of a weak ordering $\precsim$ on $X$ for some total ordering $\leq$ can often be visualized and checked by plotting a function $f_{\precsim}$ in a rectangular coordinate system. Represent the reference totally ordered set $(X,\leq)$ on the horizontal axis and the reversed version of the weakly ordered set $(X,\precsim)$ on the vertical axis.\footnote{In this representation, two equivalent elements of $X$ have the same position on the vertical axis; see, e.g., Figures~\ref{fig:2134t} and \ref{fig:1423t}.} Here again the function $f_{\preceq}$ is defined by its graph $\{(x,x):x\in X\}$. Condition (a) of Proposition~\ref{prop:52z} says that the graph of $f_{\precsim}$ is V-free, i.e., we cannot find three points $(i,i)$, $(j,j)$, $(k,k)$ in V-shape. Condition (b) is a little less immediate to interpret graphically. However, Proposition~\ref{prop:53z} below shows how conditions (a) and (b) together can be easily interpreted.

\begin{definition}
Let $\leq$ be a total ordering on $X$ and let $\precsim$ be a weak ordering on $X$. We say that a subset $P$ of $X$ of size $|P|\geq 2$ is \emph{a plateau for $(\leq,\precsim)$} if $P$ is convex for $\leq$ and if there exists $x\in X$ such that $P\subseteq [x]_{\sim}$.
\end{definition}

\begin{proposition}\label{prop:53z}
Let $\leq$ be a total ordering on $X$ and let $\precsim$ be a weak ordering on $X$. Consider the assertions (a) and (b) of Proposition~\ref{prop:52z} as well as the following one.
\begin{enumerate}
\item[(b')] If $P\subseteq X$, $|P|\geq 2$, is a plateau for $(\leq,\precsim)$, then it is $\precsim$-minimal in the sense that for every $a\in X$ satisfying $a\precsim P$ there exists $z\in P$ such that $z\sim a$.
\end{enumerate}
Then we have ((a) and (b')) $\Rightarrow$ (b) and (b) $\Rightarrow$ (b').
\end{proposition}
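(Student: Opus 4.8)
The plan is to establish the two implications separately, with $(\mathrm{b})\Rightarrow(\mathrm{b}')$ being the more routine one and $((\mathrm{a})\text{ and }(\mathrm{b}'))\Rightarrow(\mathrm{b})$ carrying the real content. In both directions I would argue by contradiction and simply unwind the definitions of $\cleq$, convexity, plateau, and $\precsim$-minimality.

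For $(\mathrm{b})\Rightarrow(\mathrm{b}')$, assume (b) and suppose some plateau $P\subseteq[x]_{\sim}$ (so $|P|\geq 2$ and $P$ is convex for $\leq$) fails to be $\precsim$-minimal. Then there is $a\in X$ with $a\precsim P$ but $a\not\sim x$, hence $a\prec x$. Choosing two distinct $p,q\in P$, we have $a\prec p\sim q$ with $p\neq q$, so (b) yields $a\in\cleq(p,q)$. Since $p,q\in P$ and $P$ is convex for $\leq$, this forces $a\in P$, whence $a\sim x$, a contradiction.

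For $((\mathrm{a})\text{ and }(\mathrm{b}'))\Rightarrow(\mathrm{b})$, assume (a) and $(\mathrm{b}')$ and, for contradiction, that (b) fails: there are $a,b,c$ with $a\neq c$, $b\prec a\sim c$, and $b\notin\cleq(a,c)$. Since $\cleq$, convexity, and all three assertions are invariant under replacing $\leq$ by its reverse, I may assume $a<c$; as $b\neq a,c$ while $b\notin\cleq(a,c)$, this gives $b<a$ or $b>c$. I then examine the closed interval $[a,c]=\{z:a\leq z\leq c\}$: by (a), every interior point $z$ (with $a<z<c$) satisfies $z\precsim a$. Two cases arise. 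If every interior point actually satisfies $z\sim a$, then $[a,c]\subseteq[a]_{\sim}$ is a plateau with $b\prec a$ lying strictly below it, contradicting $(\mathrm{b}')$ upon taking the witness $b$ in the minimality condition. Otherwise some interior point $d$ satisfies $d\prec a\sim c$; then, according to whether $b<a$ or $b>c$, the triple $b<a<d$ or the triple $d<c<b$ has its $\leq$-centrist $\sim a$ while both extremes are strictly $\prec a$, so (a) fails on that triple.

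The hard part will be this second implication, and specifically the dichotomy on the interval $[a,c]$: the obstruction is exactly the possibility that points lying $\leq$-between the two $\sim$-equivalent elements $a$ and $c$ sit strictly $\precsim$-below their common class. Recognising that such a rogue point $d$ must be paired with the external point $b$ into a single V-shaped triple (centred at $a$ or at $c$) in order to contradict (a) is the key move, and it is worth noting that $(\mathrm{b}')$ is invoked only in the complementary case, where the interval genuinely is a plateau. Everything else is definitional bookkeeping.
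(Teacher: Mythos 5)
Your proof is correct and follows essentially the same strategy as the paper's: the $(\mathrm{b})\Rightarrow(\mathrm{b}')$ direction is the same convexity argument, and the forward direction runs the same dichotomy between ``the points between $a$ and $c$ form a plateau'' and ``some such point $d$ drops strictly below $[a]_\sim$ and creates a V-shaped triple with $b$.'' The one (favourable) difference is that you work with the closed interval $\{z: a\leq z\leq c\}$ rather than the strict convex hull $\cleq(a,c)$, which cleanly covers the degenerate cases where $\cleq(a,c)$ is empty or a singleton and which the paper's phrasing glosses over.
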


\begin{proof}
Let us prove that ((a) and (b')) implies (b). Let $a,b,c\in X$ such that $a\neq c$ and $b\prec a\sim c$ and suppose that $b\notin\cleq(a,c)$. Assume without loss of generality that $b<a$. If $\cleq(a,c)$ is a plateau for $(\leq,\precsim)$, then it cannot be $\precsim$-minimal, which contradicts (b'). Hence $\cleq(a,c)$ is not a plateau for $(\leq,\precsim)$, which means that there exists $z\in\cleq(a,c)$ such that $\neg(z\sim a)$. By (a) we then have $z\prec a$. But then the set $\{a,b,z\}$ violates condition (a) since the points $(b,b)$, $(a,a)$, $(z,z)$ are in V-shape.

Let us now prove that (b) implies (b'). Let $P\subseteq X$, $|P|\geq 2$, be a plateau for $(\leq,\precsim)$ and let $a,c\in P$, $a\neq c$. Suppose that $P$ is not $\precsim$-minimal, i.e., there exists $b\in X$ such that $b\prec a\sim c$. By (b), we have $b\in\mathrm{conv}_{\leq}(a,c)$, which contradicts the fact that $P$ is a plateau for $(\leq,\precsim)$.
\end{proof}

From Proposition~\ref{prop:53z} it follows that conditions (a) and (b) hold if and only if conditions (a) and (b') hold. As discussed above, condition (a) says that the graph of $f_{\precsim}$ is V-free. Now, condition (b') simply says that the graph of $f_{\precsim}$ is both reversed L-free and L-free, which means that the two patterns shown in Figure~\ref{fig:fpat} (reversed L-shape and L-shape), where each horizontal part is a plateau $P$, are forbidden.

\setlength{\unitlength}{4ex}
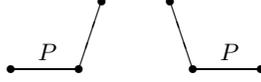
\begin{figure}[htbp]
\begin{center}
\begin{small}
\begin{picture}(6.5,2.5)
\drawline[1](0.5,0.5)(2,0.5)(2.5,2)\drawline[1](4,2)(4.5,0.5)(6,0.5)
\put(0.5,0.5){\circle*{0.15}}\put(2,0.5){\circle*{0.15}}\put(2.5,2){\circle*{0.15}}
\put(4,2){\circle*{0.15}}\put(4.5,0.5){\circle*{0.15}}\put(6,0.5){\circle*{0.15}}
\put(1.1,0.7){$P$}\put(5.1,0.7){$P$}
\end{picture}
\end{small}
\caption{The two patterns excluded by condition (b')}
\label{fig:fpat}
\end{center}
\end{figure}

Summing up, we have proved the following result.

\begin{theorem}\label{thm:VLL5}
Let $\leq$ be a total ordering on $X$ and let $\precsim$ be a weak ordering on $X$. Then $\precsim$ is weakly single-peaked for $\leq$ if and only if conditions (a) and (b') of Propositions~\ref{prop:52z} and \ref{prop:53z} hold (i.e., the graph of $f_{\precsim}$ is V-free, L-free, and reversed L-free).
\end{theorem}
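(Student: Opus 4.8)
The plan is to obtain this theorem as a purely logical consequence of the two preceding propositions, so that no fresh computation is required. By Proposition~\ref{prop:52z} we already know that $\precsim$ is weakly single-peaked for $\leq$ if and only if conditions (a) and (b) both hold. Hence it suffices to establish the equivalence
$$
\text{(a) and (b)}\quad\Longleftrightarrow\quad\text{(a) and (b$'$)}.
$$

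First I would prove the implication from left to right. Assuming that (a) and (b) hold, the second implication of Proposition~\ref{prop:53z} (namely (b) $\Rightarrow$ (b$'$)) gives (b$'$), so (a) and (b$'$) hold as well. Conversely, assuming that (a) and (b$'$) hold, the first implication of Proposition~\ref{prop:53z} (namely ((a) and (b$'$)) $\Rightarrow$ (b)) gives (b), so (a) and (b) hold. This settles the displayed equivalence, and combining it with Proposition~\ref{prop:52z} shows that weak single-peakedness of $\precsim$ for $\leq$ is equivalent to the conjunction of (a) and (b$'$).

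Finally I would record the graphical reading of this criterion, which is what makes the statement useful. As explained in the discussion preceding the theorem, condition (a) is precisely the assertion that the graph of $f_{\precsim}$ is V-free, while condition (b$'$) is precisely the assertion that this graph is both reversed L-free and L-free (the two forbidden patterns depicted in Figure~\ref{fig:fpat}). Substituting these geometric restatements into the equivalence just proved yields the parenthetical characterization in the statement.

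I do not expect any genuine obstacle here: all the substantive content has been front-loaded into Proposition~\ref{prop:53z}, whose proof already handled the delicate plateau-versus-convexity bookkeeping. The only point requiring a moment's care is to keep condition (a) fixed on both sides of the equivalence, since Proposition~\ref{prop:53z} transfers only between (b) and (b$'$) and relies on (a) as a standing hypothesis in its $\Rightarrow$ direction; once that is observed, the argument reduces to the one-line chaining above.
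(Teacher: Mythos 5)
Your proposal is correct and follows exactly the paper's own route: the paper likewise obtains the theorem by combining Proposition~\ref{prop:52z} (weak single-peakedness $\Leftrightarrow$ (a) and (b)) with the two implications of Proposition~\ref{prop:53z} to get (a)$\wedge$(b) $\Leftrightarrow$ (a)$\wedge$(b$'$), and then reads off the V-free/L-free/reversed-L-free interpretation from the preceding discussion. Your remark that (a) must be carried as a standing hypothesis in the ((a) and (b$'$)) $\Rightarrow$ (b) direction is exactly the right point of care.
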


\begin{example}\label{ex:2134t}
Let us consider the operation $F\colon X_4^2\to X_4$ shown in Figure~\ref{fig:2134t} (left). Using the tests given at the end of Section~2 for instance, we can see that this operation is associative and quasitrivial. It is also $\leq_4$-preserving and such that $\deg_F=(0,3,3,6)$. Thus, $F$ is of the form \eqref{eq:kimura}, where $\precsim$ is the weak ordering on $X_4$ obtained by ranking the numbers $\deg_F(x)$, $x\in X_4$, in nondecreasing order, that is, $2\prec 1\sim 3\prec 4$; see Figure~\ref{fig:2134t} (center). By Proposition~\ref{prop:Qwspl} this weak ordering $\precsim$ is weakly single-peaked for $\leq_4$. By Theorem~\ref{thm:VLL5} the graph of $f_{\precsim}$ is V-free, L-free, and reversed L-free; see Figure~\ref{fig:2134t} (right).
\end{example}

\setlength{\unitlength}{4.5ex}
\begin{figure}[htbp]
\begin{center}
\begin{small}
\begin{picture}(4,4)
\multiput(0.5,0.5)(0,1){4}{\multiput(0,0)(1,0){4}{\circle*{0.18}}}
\drawline[1](0.5,3.5)(3.5,3.5)(3.5,0.5)\drawline[1](1.5,2.5)(2.5,2.5)(2.5,0.5)
\drawline[1](0.5,2.5)(0.5,0.5)(1.5,0.5)
\put(0.5,0){\makebox(0,0){$1$}}\put(1,0){\makebox(0,0){$<$}}\put(1.5,0){\makebox(0,0){$2$}}
\put(2,0){\makebox(0,0){$<$}}\put(2.5,0){\makebox(0,0){$3$}}\put(3,0){\makebox(0,0){$<$}}
\put(3.5,0){\makebox(0,0){$4$}}
\end{picture}
\hspace{0.05\textwidth}
\begin{picture}(4,4)
\multiput(0.5,0.5)(0,1){4}{\multiput(0,0)(1,0){4}{\circle*{0.18}}}
\drawline[1](0.5,3.5)(3.5,3.5)(3.5,0.5)\drawline[1](1.5,2.5)(1.5,0.5)\drawline[1](2.5,2.5)(2.5,0.5)
\drawline[1](0.5,1.5)(1.5,1.5)\put(1.5,2.5){\oval(2,0.6)[t]}
\put(0.5,0){\makebox(0,0){$2$}}\put(1,0){\makebox(0,0){$\prec$}}\put(1.5,0){\makebox(0,0){$1$}}
\put(2,0){\makebox(0,0){$\sim$}}\put(2.5,0){\makebox(0,0){$3$}}\put(3,0){\makebox(0,0){$\prec$}}
\put(3.5,0){\makebox(0,0){$4$}}
\end{picture}
\hspace{0.06\textwidth}
\begin{picture}(6,4)
\put(1,0.5){\vector(1,0){5}}\put(1,0.5){\vector(0,1){3.2}}
\multiput(2,0.45)(1,0){4}{\line(0,1){0.1}}%
\multiput(0.95,1)(0,1){3}{\line(1,0){0.1}}%
\put(2,0){\makebox(0,0){$1$}}\put(3,0){\makebox(0,0){$2$}}\put(4,0){\makebox(0,0){$3$}}
\put(5,0){\makebox(0,0){$4$}}
\put(0.8,1){\makebox(0,0)[r]{$4$}}\put(0.8,2){\makebox(0,0)[r]{${1\sim 3}$}}\put(0.8,3){\makebox(0,0)[r]{$2$}}
\drawline[1](2,2)(3,3)(4,2)(5,1)
\put(2,2){\circle*{0.15}}\put(3,3){\circle*{0.15}}\put(4,2){\circle*{0.15}}\put(5,1){\circle*{0.15}}
\put(4,3){\makebox(0,0){\begin{normalsize}$f_{\precsim}$\end{normalsize}}}
\end{picture}
\end{small}
\caption{Example~\ref{ex:2134t}}
\label{fig:2134t}
\end{center}
\end{figure}

\begin{example}\label{ex:1423t}
Let us consider the operation $F\colon X_4^2\to X_4$ shown in Figure~\ref{fig:1423t} (left). Just as in Example~\ref{ex:2134t}, we can see that this operation is of the form \eqref{eq:kimura}, where $\precsim$ is the weak ordering on $X_4$ defined by $1\prec 4\prec 2\sim 3$; see Figure~\ref{fig:1423t} (center). Since $F$ is not $\leq_4$-preserving, by Proposition~\ref{prop:Qwspl} the weak ordering $\precsim$ is not weakly single-peaked for $\leq_4$. Here the graph of $f_{\precsim}$ is neither V-free, nor L-free, nor reversed L-free. It has the plateau $P=\{2,3\}$, which is not $\precsim$-minimal; see Figure~\ref{fig:1423t} (right).
\end{example}

\setlength{\unitlength}{4.5ex}
\begin{figure}[htbp]
\begin{center}
\begin{small}
\begin{picture}(4,4)
\multiput(0.5,0.5)(0,1){4}{\multiput(0,0)(1,0){4}{\circle*{0.18}}}
\drawline[1](1.5,3.5)(1.5,0.5)\drawline[1](2.5,3.5)(2.5,0.5)
\drawline[1](2.5,2.5)(3.5,2.5)\drawline[1](0.5,1.5)(1.5,1.5)
\put(1.5,2.5){\oval(2,0.6)[t]}\put(2.5,1.5){\oval(2,0.6)[t]}
\put(2,3.5){\oval(3,0.6)[t]}\put(3.5,2){\oval(0.6,3)[r]}
\put(0.5,0){\makebox(0,0){$1$}}\put(1,0){\makebox(0,0){$<$}}\put(1.5,0){\makebox(0,0){$2$}}
\put(2,0){\makebox(0,0){$<$}}\put(2.5,0){\makebox(0,0){$3$}}\put(3,0){\makebox(0,0){$<$}}
\put(3.5,0){\makebox(0,0){$4$}}
\end{picture}
\hspace{0.05\textwidth}
\begin{picture}(4,4)
\multiput(0.5,0.5)(0,1){4}{\multiput(0,0)(1,0){4}{\circle*{0.18}}}
\drawline[1](2.5,3.5)(2.5,0.5)\drawline[1](3.5,3.5)(3.5,0.5)\drawline[1](0.5,2.5)(2.5,2.5)
\drawline[1](0.5,1.5)(1.5,1.5)(1.5,0.5)\drawline[1](0.5,3.5)(1.5,3.5)\put(2.5,3.5){\oval(2,0.6)[t]}
\put(0.5,0){\makebox(0,0){$1$}}\put(1,0){\makebox(0,0){$\prec$}}\put(1.5,0){\makebox(0,0){$4$}}
\put(2,0){\makebox(0,0){$\prec$}}\put(2.5,0){\makebox(0,0){$2$}}\put(3,0){\makebox(0,0){$\sim$}}
\put(3.5,0){\makebox(0,0){$3$}}
\end{picture}
\hspace{0.06\textwidth}
\begin{picture}(6,4)
\put(1,0.5){\vector(1,0){5}}\put(1,0.5){\vector(0,1){3.2}}
\multiput(2,0.45)(1,0){4}{\line(0,1){0.1}}%
\multiput(0.95,1)(0,1){3}{\line(1,0){0.1}}%
\put(2,0){\makebox(0,0){$1$}}\put(3,0){\makebox(0,0){$2$}}\put(4,0){\makebox(0,0){$3$}}
\put(5,0){\makebox(0,0){$4$}}
\put(0.8,1){\makebox(0,0)[r]{${2\sim 3}$}}\put(0.8,2){\makebox(0,0)[r]{$4$}}\put(0.8,3){\makebox(0,0)[r]{$1$}}
\drawline[1](2,3)(3,1)(4,1)(5,2)
\put(2,3){\circle*{0.15}}\put(3,1){\circle*{0.15}}\put(4,1){\circle*{0.15}}\put(5,2){\circle*{0.15}}
\put(4,3){\makebox(0,0){\begin{normalsize}$f_{\precsim}$\end{normalsize}}}
\end{picture}
\end{small}
\caption{Example~\ref{ex:1423t}}
\label{fig:1423t}
\end{center}
\end{figure}

\begin{remark}
For any integer $n\geq 1$, the weak orderings $\precsim$ on $X=X_n$ that satisfy conditions (a) and (b') of Propositions~\ref{prop:52z} and \ref{prop:53z} are known in social choice theory as being \emph{single-plateaued for $\leq_n$} (see, e.g., \cite[Definition 4 and Lemma 17]{Fitz15}).\footnote{Both concepts of single-peakedness and single-plateauedness were introduced on finite domains by Black~\cite{Bla87}.} Thus, by Theorem~\ref{thm:VLL5} the weak orderings $\precsim$ on $X_n$ that are weakly single-peaked for $\leq_n$ are also single-plateaued for $\leq_n$ and vice versa. Since the graphical representations of these weak orderings need not include plateaus, we will keep our terminology and say that they are weakly single-peaked for $\leq_n$.
\end{remark}

We can now extend Proposition~\ref{prop:spConv} to weak orderings.

\begin{proposition}
Let $\leq$ be a total ordering on $X$ and let $\precsim$ be a weak ordering on $X$. Then condition (a) of Proposition~\ref{prop:52z} holds if and only if for every $t\in X$ the set $\{x\in X: x\precsim t\}$ is convex for $\leq$.
\end{proposition}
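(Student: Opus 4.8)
The plan is to follow the pattern of Proposition~\ref{prop:spConv}, adapting it from total orderings to the weak ordering $\precsim$ and invoking totality and transitivity of $\precsim$ where the original argument used antisymmetry. Throughout I write $L_t=\{x\in X: x\precsim t\}$ for the $\precsim$-lower set of $t\in X$.

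First I would prove the forward implication. Assuming condition (a), fix $t\in X$ and take $a,c\in L_t$ together with $b\in\cleq(a,c)$. Condition (a) gives $b\precsim a$ or $b\precsim c$, and since $a\precsim t$ and $c\precsim t$, transitivity of $\precsim$ forces $b\precsim t$ in either case; hence $b\in L_t$, so $L_t$ is convex for $\le$.

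Next I would handle the converse by contraposition. Suppose condition (a) fails, so there exist $a,b,c\in X$ with $b\in\cleq(a,c)$, $\neg(b\precsim a)$, and $\neg(b\precsim c)$; note that $b\in\cleq(a,c)$ already forces $a,b,c$ to be pairwise distinct. Using totality of $\precsim$, and swapping $a$ with $c$ if necessary (which leaves $\cleq(a,c)$ unchanged), I may assume $a\precsim c$. Taking $t=c$ then yields $a\precsim t$ and $c\precsim t$, so $a,c\in L_t$, whereas $\neg(b\precsim t)$ gives $b\notin L_t$; since $b\in\cleq(a,c)$, the set $L_t$ is not convex for $\le$, which contradicts the hypothesis.

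I expect no genuine obstacle here; the statement is a routine relativization of Proposition~\ref{prop:spConv} to weak orderings. The one point that requires a little care is the choice of witness $t$ in the converse: because $\precsim$ is only a weak ordering, $\{a,c\}$ need not possess a strict $\precsim$-maximum, so rather than appealing to such a maximum I normalize to $a\precsim c$ by totality and simply take $t=c$, which works precisely because $\neg(b\precsim c)$ keeps $b$ out of $L_c$.
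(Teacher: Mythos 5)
Your proposal is correct and follows essentially the same route as the paper: the forward direction is the same one-line transitivity argument (which you spell out slightly more explicitly), and your converse via contraposition with the normalization $a\precsim c$ and witness $t=c$ is exactly the paper's choice of $t_0$ as a $\precsim$-greatest element of $\{a,c\}$. No gaps.
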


\begin{proof}
(Necessity) Let $t\in X$ and let $a,b,c\in X$ such that $a,c\in\{x\in X: x\precsim t\}$ and $b\in\cleq(a,c)$. By condition (a), we have $b\in\{x\in X: x\precsim t\}$.

(Sufficiency) For the sake of a contradiction, suppose that there exist $a,b,c\in X$ such that $b\in\cleq(a,c)$ and $\max_{\precsim}(a,c)\prec b$. Set $t_0=c$ if $a\prec c$, and $t_0=a$, otherwise. We then have $a,c\in\{x\in X: x\precsim t_0\}$. By convexity for $\leq$ we also have $b\in\{x\in X: x\precsim t_0\}$. Therefore we have $\max_{\precsim}(a,c)\prec b\precsim t_0$, which contradicts the definition of $t_0$.
\end{proof}

\begin{remark}
The dual version of weak single-peakedness can be defined exactly as we did for single-peakedness (see Remark~\ref{rem:dualsp}): just replace the condition $b\prec a$ or $b\prec c$ or $a\sim b\sim c$ by $a\prec b$ or $c\prec b$ or $a\sim b\sim c$. Here again, considering the weak single-peakedness property or its dual version is simply a matter of convention.
\end{remark}

%---------------------------------------------------------------------------------------
\section{Conclusion}

This paper is rooted in a known characterization of associative and quasitrivial binary operations on an arbitrary set $X$, which essentially states that each of these operations can be thought of as a maximum with respect to a weak ordering (Theorem~\ref{thm:kimura}). We established different characterizations of the subclass of associative, quasitrivial, and commutative operations (Theorem~\ref{thm:char1}) and different characterizations of the subclass of associative, quasitrivial, commutative, and $\leq$-preserving operations when $X$ is endowed with a total ordering $\leq$ (Theorem~\ref{thm:cchar1}). When commutativity is no longer assumed, finding generalizations of these characterizations remains an interesting open question (see below).

When $X$ is an $n$-element set we also enumerated
\begin{itemize}
\item all associative and quasitrivial operations with or without neutral and/or annihilator elements (Theorem~\ref{thm:Nk} and Proposition~\ref{prop:qe}), thus solving an enumeration problem posed in \cite{CouDevMar},
\item all associative, quasitrivial, and $\leq$-preserving operations (when $X$ is endowed with a total ordering $\leq$) with or without neutral and/or annihilator elements (Propositions~\ref{prop:rn} and \ref{prop:ren}).
\end{itemize}

In order to characterize those that are $\leq$-preserving, we made use of single-peakedness. We proposed a generalization of this concept by introducing weak single-peakedness (Definition~\ref{de:wBlack} and Theorem~\ref{thm:QtAsNdKi}) and we provided a graphical characterization of the latter (Theorem~\ref{thm:VLL5}). When $X$ is an $n$-element set, we also enumerated all weak orderings on $X$ that are weakly single-peaked for the reference ordering on $X$ (Propositions~\ref{prop:vn} and \ref{prop:ven}). We posted in the Sloane's OEIS \cite{Slo} all the new sequences that arose from our results.

In view of these results, some questions emerge naturally and we now list a few below.

\begin{itemize}
\item Generalize Theorems~\ref{thm:char1} and \ref{thm:cchar1} by removing commutativity in assertion (i).
\item Analyze the asymptotic behavior of the sequence $(q(n))_{n\geq 0}$ (see Remark~\ref{rem:r4zt}(a)).
\item The integer sequences A000129, A002605, A048739, and A163271 were previously introduced in the OEIS to solve enumeration problems not related to weak single-peakedness and quasitrivial semigroups. It would be interesting to establish one-to-one correspondences between those problems and ours.
\item Find the number of operations $F\colon X_n^2\to X_n$ that are associative, quasitrivial, and $\leq$-preserving for some total ordering $\leq$ on $X$. The values for $1\leq n\leq 4$ are 1, 4, 20, 130. For instance the operation on $X_4$ represented in Figure~\ref{fig:nop} is associative and quasitrivial. However, there is no total ordering $\leq$ on $X_4$ for which this operation is $\leq$-preserving.
\end{itemize}

%---------------------------------------------------------------------------------------------- Acknowledgments
\section*{Acknowledgments}

The authors thank the reviewer for his/her quick and careful review and useful suggestions. They also thank Meltem \"Ozt\"urk of Paris-Dauphine University for bringing reference \cite{Fitz15} to their attention. This research is partly supported by the Internal Research Project R-AGR-0500 of the University of Luxembourg and by the Luxembourg National Research Fund R-AGR-3080.

%---------------------------------------------------------------------------------------------- Appendix
\appendix
\section*{Appendix: alternative proof of formula~\eqref{eq:Nk}}

We provide an alternative proof of formula~\eqref{eq:Nk} that does not make use of the EGF of the sequence $(q(n))_{n\geq 0}$.

For any integer $n\geq 0$, define $s(n)=\min\{n+1,2\}$. Also, for any integer $k\geq 1$, let $\mathcal{P}_k$ be the vector space of real polynomial functions of $k$ variables, and let $T_k\colon\mathcal{P}_k\to\R$ be the linear transformation defined as
$$
T_k(P) ~=~ \sum_{I\subseteq\{1,\ldots,k\}}(-1)^{|I|+k}{\,}2^{|I|}\int_{[0,1]^k}P(t_1,\ldots,t_k)\big|_{t_i=0{\,}\forall i\notin I}~dt_1\cdots{\,}dt_k.
$$
For instance, if $P(x_1,\ldots,x_k)=\prod_{i=1}^k x_i^{m_i}$ for some integers $m_1,\ldots,m_k\geq 0$, then we have
\begin{eqnarray}
T_k(P) &=& \sum_{I\subseteq\{1,\ldots,k\}{\,}\text{s.t.}{\,}m_i=0{\,}\forall i\notin I}(-1)^{|I|+k}{\,}2^{|I|}{\,}\prod_{i\in I}\frac{1}{m_i+1}\nonumber\\
&=& \sum_{I\subseteq\{1,\ldots,k\}}~\bigg(\prod_{i\in I}\frac{2}{m_i+1}\bigg){\,}\bigg((-1)^{k-|I|}\prod_{i\in\{1,\ldots,k\}\setminus I}(2-s(m_i))\bigg)\nonumber\\
&=& \prod_{i=1}^k\Big(\frac{2}{m_i+1}-2+s(m_i)\Big) ~=~ \prod_{i=1}^k \frac{s(m_i)}{m_i+1}{\,},\label{eq:TjP}
\end{eqnarray}
where we have used the multi-binomial theorem
$$
\prod_{i=1}^k(x_i+y_i) ~=~ \sum_{I\subseteq\{1,\ldots,k\}}\prod_{i\in I}x_i\prod_{i\in\{1,\ldots,k\}\setminus I}y_i.
$$
Similarly, if $P(x_1,\ldots,x_k)=(\sum_{i=1}^kx_i)^{n-k}$ for some integer $n\geq k$, then we have
\begin{eqnarray}
T_k(P) &=& \sum_{I\subseteq\{1,\ldots,k\}}(-1)^{|I|+k}{\,}2^{|I|}\int_{[0,1]^k}\Big(\sum_{i\in I}t_i\Big)^{n-k}~dt_1\cdots{\,}dt_j\nonumber\\
&=& \sum_{i=0}^k{\,}(-1)^{i+k}{\,}{k\choose i}{\,}2^i{n-k+i\brace i}{n-k+i\choose i}^{-1},\label{eq:TjP2}
\end{eqnarray}
where we have used the formula (see, e.g., \cite[p.~202]{Jor79})
$$
\int_{[0,1]^n}\Big(\sum_{i=1}^nt_i\Big)^kdt_1\cdots{\,}dt_n ~=~ {k+n\brace n}{k+n\choose n}^{-1}\qquad\text{($k\geq 0$, $n\geq 1$, integers)}.
$$

We now use the results above to establish the claimed expression of $q(n)$.

Let us assume that $n\geq 1$. Using Theorem~\ref{thm:kimura} we can easily see that (see justification in the proof of Theorem~\ref{thm:Nk})
$$
q(n) ~=~ \sum_{k=1}^n ~\sum_{\textstyle{n_1+\cdots +n_k=n\atop n_1,\ldots,n_k\geq 1}}{n\choose n_1,\ldots,n_k}\prod_{\textstyle{i=1\atop n_i\geq 2}}^k 2{\,}.
$$
Setting $m_i=n_i-1$ for $i=1,\ldots,k$ in the latter formula, we obtain
\begin{eqnarray*}
q(n) &=& \sum_{k=1}^n ~\sum_{\textstyle{m_1+\cdots +m_k=n-k\atop m_1,\ldots,m_k\geq 0}}\frac{n!}{(m_1+1)!{\,}\cdots{\,}(m_k+1)!}{\,}\prod_{i=1}^k s(m_i)\\
&=& \sum_{k=1}^n\frac{n!}{(n-k)!} ~\sum_{\textstyle{m_1+\cdots +m_k=n-k\atop m_1,\ldots,m_k\geq 0}}\frac{(n-k)!}{m_1!{\,}\cdots{\,}m_k!}~\prod_{i=1}^k \frac{s(m_i)}{m_i+1}
\end{eqnarray*}
Using \eqref{eq:TjP}, the linearity of $T_k$, the multinomial theorem, and then \eqref{eq:TjP2}, we obtain
\begin{eqnarray*}
q(n) &=& \sum_{k=1}^n\frac{n!}{(n-k)!} ~\sum_{\textstyle{m_1+\cdots +m_k=n-k\atop m_1,\ldots,m_k\geq 0}}{n-k\choose m_1,\ldots,m_k}~T_k\Big(\prod_{i=1}^k x_i^{m_i}\Big)\\
&=& \sum_{k=1}^n\frac{n!}{(n-k)!} ~T_k\bigg(\sum_{\textstyle{m_1+\cdots +m_k=n-k\atop m_1,\ldots,m_k\geq 0}}{n-k\choose m_1,\ldots,m_k}{\,}\prod_{i=1}^k x_i^{m_i}\bigg)\\
&=& \sum_{k=1}^n\frac{n!}{(n-k)!} ~T_k\bigg(\Big(\sum_{i=1}^kx_i\Big)^{n-k}\bigg)\\
&=& \sum_{k=1}^n k!{\,}\sum_{i=0}^k(-1)^{i+k}{\,}2^i{n\choose k-i}{n-k+i\brace i}.
\end{eqnarray*}

Permuting the sums in the last expression and observing that ${n\brace 0}=0$, we finally obtain
$$
q(n) ~=~ \sum_{i=0}^n(-2)^i{\,}\sum_{k=i}^n(-1)^k{\,}{n\choose k-i}{n-k+i\brace i}k!{\,},
$$
from which we immediately derive the claimed expression of $q(n)$.\qed


\begin{thebibliography}{99}

\bibitem{Ack}
N.~L.~Ackerman.
\newblock A characterization of quasitrivial $n$-semigroups.
\newblock To appear in {\em Algebra Universalis}.

\bibitem{BerPer06}
S.~Berg and T.~Perlinger.
\newblock Single-peaked compatible preference profiles: some combinatorial results.
\newblock {\em Social Choice and Welfare} 27(1):89--102, 2006.

\bibitem{Bla48}
D.~Black.
\newblock On the rationale of group decision-making.
\newblock {\em J Polit Economy}, 56(1):23--34, 1948

\bibitem{Bla87}
D.~Black.
\newblock {\em The theory of committees and elections}.
\newblock Kluwer Academic Publishers, Dordrecht, 1987.

\bibitem{CouDevMar}
M.~Couceiro, J.~Devillet, and J.-L.~Marichal.
\newblock Characterizations of idempotent discrete uninorms.
\newblock {\em Fuzzy Sets and Syst.}, 334:60--72, 2018.

\bibitem{CzoDre84}
E.~Czoga{\l}a and J.~Drewniak.
\newblock Associative monotonic operations in fuzzy set theory.
\newblock {\em Fuzzy Sets and Syst.}, 12(3):249--269, 1984.

\bibitem{DevKisMar}
J.~Devillet, G.~Kiss, and J.-L.~Marichal.
\newblock Characterizations of quasitrivial symmetric nondecreasing associative operations.
\newblock Submitted for publication. arXiv:1705.00719.

\bibitem{Fitz15}
Z.~Fitzsimmons.
\newblock Single-peaked consistency for weak orders is easy.
\newblock In Proc.\ of the 15th Conf.\ on Theoretical Aspects of Rationality and Knowledge (TARK 2015), pages 127--140, June 2015. arXiv:1406.4829.

\bibitem{GraKnuPat94}
R.~L.~Graham, D.~E.~Knuth, and O.~Patashnik.
\newblock {\em Concrete Mathematics: A Foundation for Computer Science.} 2nd edition.
\newblock Addison-Wesley Longman Publishing Co., Boston, MA, USA, 1994.

\bibitem{Jez78}
J.~Je\v{z}ek and T.~Kepka.
\newblock Quasitrivial and nearly quasitrivial distributive groupoids and semigroups.
\newblock {\em Acta Univ. Carolin. - Math. Phys.}, 19(2):25--44, 1978.

\bibitem{Jor79}
C.~Jordan.
\newblock {\em Calculus of finite differences}.
\newblock Third edition (first ed.\ published in 1939). Chelsea Publishing Company, New York, 1979.

\bibitem{Kep81}
T.~Kepka.
\newblock Quasitrivial groupoids and balanced identities.
\newblock {\em Acta Univ. Carolin. - Math. Phys.}, 22(2):49--64, 1981.

\bibitem{Kim58}
N.~Kimura.
\newblock The structure od idempotent semigroups. I.
\newblock {\em Pacific J. Math.}, 8:257--275, 1958.

\bibitem{Kis}
G. Kiss.
\newblock Visual characterization of associative quasitrivial nondecreasing functions on finite chains.
\newblock Preprint. arXiv:1709.07340

\bibitem{Lan80}
H.~L\"anger.
\newblock The free algebra in the variety generated by quasi-trivial semigroups.
\newblock {\em Semigroup Forum}, 20:151--156, 1980.

\bibitem{MarTo16}
C.~Mariconda and A.~Tonolo.
\newblock {\em Discrete calculus: methods for counting}. Unitext Vol. 103.
\newblock Springer International Publishing Switzerland, 2016.

\bibitem{MarMayTor03}
J.~Mart\'{\i}n, G.~Mayor, and J.~Torrens.
\newblock On locally internal monotonic operations.
\newblock {\em Fuzzy Sets and Syst.} 137:27-–42, 2003.

\bibitem{McL54}
D.~McLean.
\newblock Idempotent semigroups.
\newblock {\em Amer. Math. Monthly}, 61:110--113, 1954.

\bibitem{Slo}
N.~J.~A.~Sloane (editor).
\newblock The On-Line Encyclopedia of Integer Sequences.
\newblock http://www.oeis.org

\end{thebibliography}
\end{document}